\documentclass[11pt]{article}
\usepackage[latin1]{inputenc}
\usepackage{epsfig}
\usepackage{color}
\usepackage[british,english]{babel}
\usepackage{amsthm}
\usepackage{amsmath}
\usepackage{amsfonts}
\usepackage{amssymb}
\usepackage{graphicx}
\setlength{\topmargin}{-0.0in}
\setlength{\oddsidemargin}{-0.2in}
\setlength{\textheight}{8.75in}
\setlength{\textwidth}{6.75in}



\usepackage{fancyhdr}

\lhead[x1]{}
\chead[y1]{Mar\'ia Anguiano and Francisco J. Su\'arez-Grau}
\rhead[z1]{}
\fancyhead[L]{}
\fancyhead[C]{Mar\'ia Anguiano and Francisco J. Su\'arez-Grau}
\fancyhead[R]{}
\pagestyle{fancy}

\parskip 1.2ex plus 0.5ex minus 0.5ex
\newtheorem{corollary}{Corollary}[section]

\newtheorem{lemma}[corollary]{Lemma}

\newtheorem{remark}[corollary]{Remark}
\newtheorem{theorem}[corollary]{Theorem}
\newfont{\sBlackboard}{msbm10 scaled 900}

\newcommand{\mylabel}[1]{\label{#1}
            \ifx\undefined\stillediting
            \else \fbox{$#1$}\fi }
\newcommand{\BE}{\begin{equation}}

\newcommand{\EEQ}{\end{equation}}
\newcommand{\rfb}[1]{\mbox{\rm
   (\ref{#1})}\ifx\undefined\stillediting\else:\fbox{$#1$}\fi}

\newfont{\Blackboard}{msbm10 scaled 1200}

\newfont{\roma}{cmr10 scaled 1200}

\def\CC{\rm \hbox{C\kern-.56em\raise.4ex
         \hbox{$\scriptscriptstyle |$}\kern+0.5 em }}


\def\n{|\kern -.05cm{|}\kern -.05cm{|}}


%

%

%
\newcommand{\mm}    {{\hbox{\hskip 0.5pt}}}

\newcommand{\bluff} {{\hbox{\raise 15pt \hbox{\mm}}}}
%

%

%

%

%


%
%
\makeatletter
\def\section{\@startsection {section}{1}{\z@}{-3.5ex plus -1ex minus
    -.2ex}{2.3ex plus .2ex}{\large\bf}}
\makeatother
%
\def\be{\begin{equation}}
\def\ee{\end{equation}}

\date{ }
\begin{document}
\thispagestyle{empty}
\title{\bf Nonlinear Reynolds equations for non-Newtonian thin-film fluid flows over a rough boundary}\maketitle

\author{ \center  Mar\'ia ANGUIANO\\
Departamento de An\'alisis Matem\'atico. Facultad de Matem\'aticas\\
Universidad de Sevilla, P. O. Box 1160, 41080-Sevilla (Spain)\\
anguiano@us.es\\}
\medskip\author{ \center  Francisco Javier SU\'AREZ-GRAU\\ Departamento de Ecuaciones Diferenciales y An\'alisis Num\'erico. Facultad de Matem\'aticas \\ Universidad de Sevilla, 41012-Sevilla (Spain)\\
 fjsgrau@us.es\\}

\vskip20pt

 \renewcommand{\abstractname} {\bf Abstract}
\begin{abstract} 
We consider a non-Newtonian fluid flow in a thin domain with thickness $\eta_\varepsilon$ and an oscillating top boundary of period $\varepsilon$. The flow is described by the 3D incompressible Navier-Stokes system with a nonlinear viscosity, being a power of the shear rate (power law) of flow index $p$, with $9/5\leq p<+\infty$.  We consider the limit when the thickness tends to zero and we prove that the three characteristic regimes for Newtonian fluids are still valid for non-Newtonian fluids, i.e. Stokes roughness ($\eta_\varepsilon\approx \varepsilon$), Reynolds roughness ($\eta_\varepsilon\ll \varepsilon$) and high-frequency roughness ($\eta_\varepsilon\gg \varepsilon$) regime.
Moreover, we obtain different nonlinear Reynolds type equations in each case.
\end{abstract}
\bigskip\noindent

 {\small \bf AMS classification numbers:} 76D08, 76A20, 76A05, 76M50, 35Q30.  \\
 
\bigskip\noindent {\small \bf Keywords:} Non-Newtonian flow; Reynolds equation; thin fluid films.   \newpage

\section {Introduction}\label{S1}
The classical lubrication problem is to describe the situation in which two adjacent surfaces in relative motion are separated by a thin film of fluid acting as a lubricant. Such situation appears naturally in numerous industrial and engineering applications, in particular those consisting of moving machine parts. The mathematical models for describing the motion of the lubricant usually result from the simplification of the geometry of the lubricant film, i.e. its thickness. Using the film thickness as a small parameter, an asymptotic approximation of the Stokes system can be derived providing the well-known Reynolds equation for the pressure of the fluid (see Bayada and Chambat \cite{Bayada1} or Reynolds \cite{Reynolds} for more details). For the stationary case,  considering no-slip condition on the boundary and an exterior force $\tilde f'$,   the two-dimensional Reynolds equation for the unknown pressure $\tilde p$ has the form
\begin{equation}\label{classical_Rey}
{\rm div}_{x'}\left({h(x')^3\over 12 \mu}\left(\tilde f'(x')-\nabla_{x'} \tilde p(x')\right)\right)=0\,,
\end{equation}
where $h$ describe the shape of the top boundary   and $\mu$ is the fluid viscosity.

Engineering practice also stresses the interest of studying the effects of  domain irregularities on a thin film flow.  Thus, the goal becomes in identifying in which way the irregular boundary affects the flow. In this sense, the oscillating boundary is described by two parameters, $\varepsilon$ and $\eta_\varepsilon$, which are devoted to tend to zero. The parameter $\varepsilon$ is the characteristic  wavelength of the periodic roughness, and $\eta_\varepsilon$ is the thickness of the domain, i.e. the distance between the surfaces.  By means of homogenization thecniques, it is showed in Bayada and Chambat \cite{Bayada_Chambat_2,Bayada_Chambat} that depending in the critical size, $\eta_\varepsilon\approx \varepsilon$ with $\eta_\varepsilon/\varepsilon\to \lambda$, $0<\lambda<+\infty$, there exist three types of flow regimes. This result has been successfully generalized to the unstationary case (the rough surface is moving) in Fabricius {\it et al.} \cite{Fab1,Fab2}. Below, we describe the three characteristic regimes:
\begin{itemize}
\item[$\bullet$ ] Stokes roughness regime: it corresponds to the critical case when the thickness of the domain is proportional to the wavelength of the roughness, with $\lambda$ the proportionality constant, $0<\lambda<+\infty$ (see Figure \ref{fig: Stokes_roughness_regime}). In this case, a modified Reynolds equation is obtained as an effective model where the coefficients are obtained by solving 3D local Stokes problems which depend on the parameter $\lambda$.
\item[$\bullet$ ] Reynolds roughness regime: it corresponds to the case when $\lambda=0$, i.e. $\eta_\varepsilon\ll \varepsilon$ and so the wavelength of the roughness is much greater than the film thickness (see Figure \ref{fig: Reynolds_roughness_regime}). In this case, a modified Reynolds equation is obtained  as an effective model where the coefficients are obtained by solving 2D local Reynolds problems. Similar averaged effective equations appear for example in  \cite{Patir,Phan,Wall}.
\item[$\bullet$] High-frequency regime:  it corresponds to the case when $\lambda=+\infty$, i.e. $\eta_\varepsilon\gg \varepsilon$ and so the wavelength of the roughness is much smaller than the film thickness (see Figure \ref{fig:High_frequency_regime}). In this case, due to the highly oscillating boundary, the velocity field vanishes in the oscillating zone and a simpler Reynolds equation is deduced in the non-oscillating zone.\end{itemize}
\begin{figure}[h!]
\begin{center}
\includegraphics[width=10.5cm]{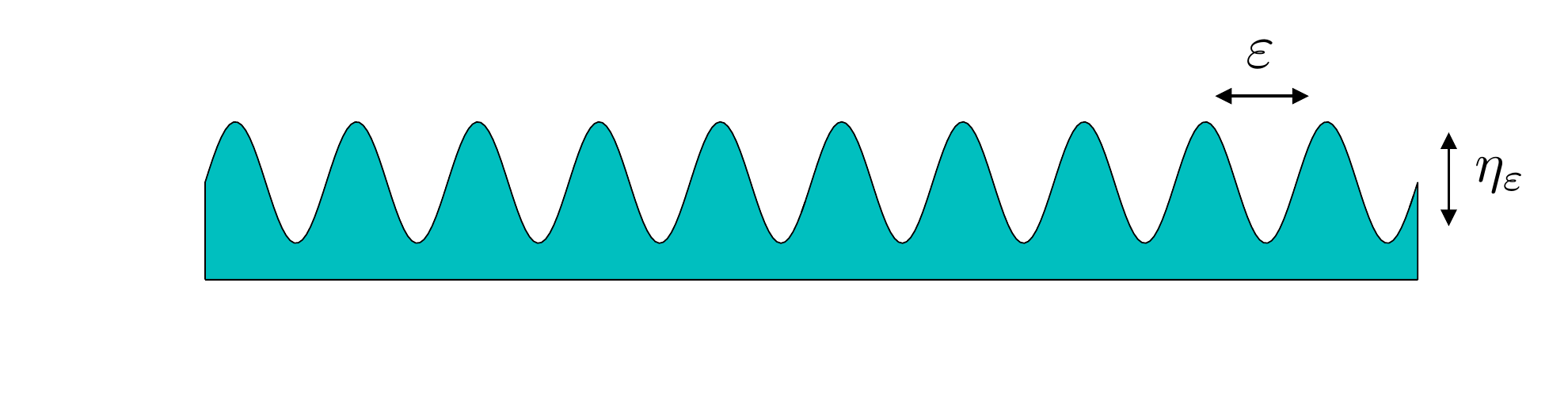}
 \vspace{-1cm}
 \end{center}
\caption{Stokes roughness regime}
\label{fig: Stokes_roughness_regime}
\end{figure}

\begin{figure}[h!]
\begin{center}
\includegraphics[width=10.5cm]{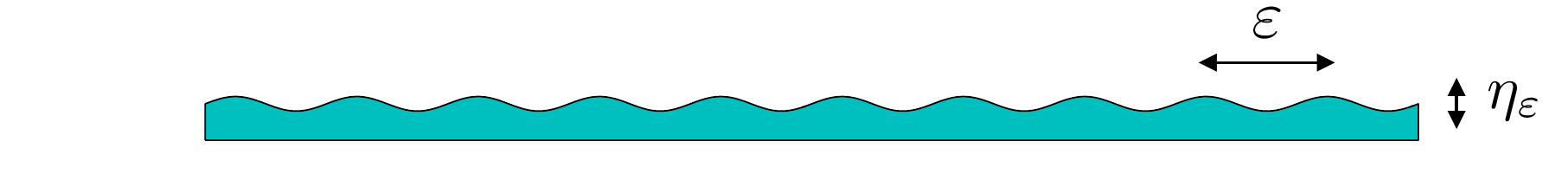}
 \vspace{-1cm}
 \end{center}
\caption{Reynolds roughness regime}
\label{fig: Reynolds_roughness_regime}
\end{figure}

\begin{figure}[h!]
\begin{center}
\includegraphics[width=10.5cm]{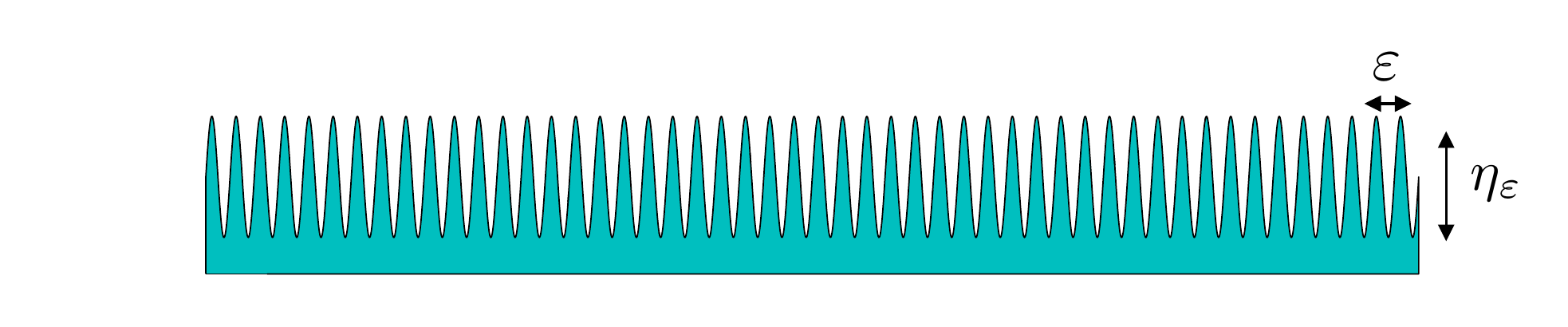}
 \vspace{-1cm}
 \end{center}
\caption{High-frequency regime}
\label{fig:High_frequency_regime}
\end{figure}

This problem is well studied in the case of Newtonian fluids, however, for the non-Newtonian fluids the situation is completely different.  The main reason is that the viscosity is a nonlinear function of the symmetrized gradient of the velocity. A relevant case of non-Newtonian fluids is when the viscosity satisfies the nonlinear power law, which is widely used for melted polymers, oil, mud, etc. If $u$ is the velocity and $Du$ the gradient velocity tensor, denoting the shear rate by $\mathbb{D}\left[u\right]=\frac{1}{2}(Du+D^t u)$, the viscosity as a function of the shear rate is given by $$\eta_{p} \left( \mathbb{D}\left[u\right]\right)=\mu \left\vert \mathbb{D}\left[u\right] \right\vert^{p-2},\text{ \ \ }1<p<+\infty,$$where the two material parameters $\mu>0$ and $p$ are called the consistency and the flow index, respectively. Recall that $p=2$ yields the Newtonian fluid, for $1<p<2$ the fluid is pseudoplastic (shear thinning), which is the characteristic of high polymers, polymer solutions, and many suspensions, whereas for $p>2$ the fluid is dilatant (shear thickening), whose behavior is reported for certain slurries, like mud, clay or cement, and implies an increased resitence to flow with intesified shearing.

Similarly to the mathematical derivation of the 2D Reynolds equation (\ref{classical_Rey}) for Newtonian fluids, a 2D nonlinear Reynolds equation  for non-Newtonian fluids has been obtained in Bourgeat {\it et al.} \cite{Bourgeat} and Mikeli\'c  and Tapiero \cite{MT}, which has the form

$${\rm div}_{x'}\left({h(x')^{p'+1}\over 2^{p'\over 2}(p'+1)\mu^{p'-1}}\left|\tilde f'(x')-\nabla_{x'}\tilde p(x')\right|^{p'-2}\left(\tilde f'(x')-\nabla_{x'}\tilde p(x')\right)\right)=0\,,$$
where $p^\prime=p/(p-1)$ is the conjugate exponent of $p$. \\

In this paper, we consider fluid flows satisfying the non-Newtonian Navier-Stokes system, where the viscosity satisfies the nonlinear power law with $9/5\leq p< +\infty$, in the thin domain with a rough boundary  described above (see Fig. \ref{fig: Stokes_roughness_regime}, Fig. \ref{fig: Reynolds_roughness_regime} and Fig. \ref{fig:High_frequency_regime}). Our purpose is to study the asymptotic behavior of this system when $\varepsilon$ and $\eta_\varepsilon$ tend to zero. The proof of our results is based on an adaptation of the unfolding method (see Arbogast {\it et al.} \cite{arbogast}, and Cioranescu {\it et al.} \cite{Ciora}), which is strongly related to the two-scale convergence method (see Allaire \cite{Allaire1}, and  Nguetseng \cite{Nghe}), but here it is necessary to combine it with a rescaling in the height variable, in order to work with a domain of fixed height, and to use monotonicity arguments to pass to the limit. The unfolding method is a very efficient tool to study periodic homogenization problems where the size of the periodic cell tends to zero. The idea is to introduce suitable changes of variables which transform every periodic cell into a simpler reference set by using a supplementary variable (microscopic variable). Thanks to this method, we are able to identify the critical size and later the effects of the microstructure in the corresponding effective equations. Thus, we obtain that the critical size is exactly the same as the one of the Newtonian case, i.e. when $\eta_\varepsilon\approx \varepsilon$ with $\eta_\varepsilon/\varepsilon\to \lambda$, $0<\lambda<+\infty$. This means that the same three characteristic regimes are still valid for the non-Newtonian case: the Stokes roughness regime ($\eta_\varepsilon\approx\varepsilon$), the Reynolds roughness regime ($\eta_\varepsilon\ll\varepsilon$) and the high-frequency regime ($\eta_\varepsilon\gg\varepsilon$). As a result, we generalize the Newtonian case studied by Bayada and Chambat \cite{Bayada_Chambat_2,Bayada_Chambat} to the case of a non-Newtonian fluid governed by the Navier-Stokes system and we give the explicit expressions in each regime, which are the main novelties of the paper.

 Some other generalized nonlinear Reynolds equations for non-Newtonian fluids has been also obtained in Duvnjak \cite{Duv} for lubrication of a rotating shaft,  in Boukrouche {\it et al.} \cite{Bouk1} and Boukrouche and El Mir \cite{Bouk2}, where it is assumed stick-slip conditions given by Tresca law on the boundary, and   in Su\'arez-Grau \cite{grau1}, where  Navier slip boundary conditions are prescribed on the rough boundary.\\

The plan of this paper is as follows. In Section \ref{S2}, the domain and some notations are introduced. In Section \ref{S3}, we formulate the problem and state our main result, which is proved in Section \ref{S6} using {\it a priori} estimates and compactness results established in Section \ref{S4} and Section \ref{S5}, respectively.

\section{The domain and some notations}\label{S2}
Along this section, the points $x\in\mathbb{R}^3$ will be decomposed as $x=(x^{\prime},x_3)$ with $x^{\prime}\in \mathbb{R}^2$, $x_3\in \mathbb{R}$. We also use the notation $x^{\prime}$ to denote a generic vector of $\mathbb{R}^2$.

We consider a smooth bounded open set $\omega\subset \mathbb{R}^2$. The thin domain with an oscillating boundary is defined by
\begin{equation}\label{Dominio1}\Omega_\varepsilon=\left\{x\in\mathbb{R}^3\,:\, x'\in\omega,\ 0<x_3< \eta_\varepsilon\, h\left({x'\over \varepsilon}\right)\right\}\,,
\end{equation}
where the oscillating part of the boundary $\partial \Omega_\varepsilon$ is given by
\begin{equation*}\label{Oscbound_1}
\Sigma_\varepsilon=\left\{x\in\mathbb{R}^3\,:\, x'\in\omega,\  x_3= \eta_\varepsilon\, h\left({x'\over \varepsilon}\right)\right\}\,.
\end{equation*}
Here, $\eta_\varepsilon h(x'/\varepsilon)$ represents the real gap between the two surfaces and $h$ is a smooth function, defined for $y'$ in $\mathbb{R}^2$, $Y'$-periodic,  being $Y'=(-1/2,1/2)^2$ the cell of periodicity. The small parameter $\eta_\varepsilon$ is related to the film thickness, whereas the small parameter $\varepsilon$ is the wavelength of the roughness.
 
In order to have a domain with thickness order one, we use the dilatation in the variable $x_3$ given by
\begin{equation}\label{dilatacion}
y_3=\frac{x_3}{\eta_\varepsilon}\,,
\end{equation}
which transforms the thin domain $\Omega_\varepsilon$ in the rescaled domain $\widetilde \Omega_\varepsilon$ given by
\begin{equation}\label{Dominio2}\widetilde \Omega_\varepsilon=\left\{(x',y_3)\in\mathbb{R}^2\times \mathbb{R}\,:\, x'\in\omega,\ 0<y_3<  h\left({x'\over \varepsilon}\right)\right\}\,,
\end{equation}
where the oscillating part of the boundary $\partial \widetilde \Omega_\varepsilon$ is given by
\begin{equation*}\label{Oscbound_2}
\widetilde \Sigma_\varepsilon=\left\{(x',y_3)\in\mathbb{R}^2\times \mathbb{R}\,:\, x'\in\omega,\  y_3 = h\left({x'\over \varepsilon}\right)\right\}\,.
\end{equation*}
We denote $$h_{\rm min}=\min_{y'\in Y'}h(y'),\quad h_{\rm max}=\max_{y'\in Y'}h(y')\,,
$$
and we define the domain with a fixed height $\Omega$ by
 \begin{equation*}\label{Omega}
 \Omega=\{(x',y_3)\in \mathbb{R}^2\times \mathbb{R}\,:\, x'\in\omega,\ 0<y_3<h_{\rm max}\}\,,
 \end{equation*}
and the corresponding top boundary  $\Sigma$ by
\begin{equation*}\label{Sigma_top}
 \Sigma=\{(x',y_3)\in \mathbb{R}^2\times \mathbb{R}\,:\, x'\in\omega,\ y_3=h_{\rm max}\}\,.
 \end{equation*}
We also define 
\begin{equation*}\label{Omega_menos}
 \Omega^-=\{(x',y_3)\in \mathbb{R}^2\times \mathbb{R}\,:\, x'\in\omega,\ 0<y_3<h_{\rm min}\}\,.
 \end{equation*}

We denote by $Y$ the reference cell in $\mathbb{R}^3$, which is given by
\begin{equation}\label{cell}Y=\{ y\in \mathbb{R}^3\,:\, y'\in Y',\ 0<y_3< h(y')\}\,,
\end{equation}
and by $L^p_{\sharp} (Y)$, $W^{1,p}_{\sharp}(Y)$, with $1<p<+\infty$, the functional spaces
$$\begin{array}{l}\displaystyle
L^p_{\sharp} (Y)=\Big\{ v\in L^p_{loc}(Y)\, : \,\int_{Y}
|v|^p dy <+\infty, \\
\qquad\qquad\quad
 v(y'+k',y_3)=v(y)\hskip 0.2cm \forall k'\in\mathbb{Z}^2,\, \mbox{a.e. }y\in Y
\Big\},\end{array}
$$
and
$$\begin{array}{l}\displaystyle
W^{1,p}_{\sharp} (Y)= \Big\{ v \in W^{1,p}_{loc}(Y)\cap L^p_{\sharp} (Y):  \int_{Y}
|\nabla_{y}v|^p dy <+\infty\Big\}.
\end{array}
$$

We denote by $O_\varepsilon$ a generic real sequence which tends to zero with $\varepsilon$ and can change from line to line. We denote by $C$ a generic positive constant which can change from line to line.
\section{Setting and main results}\label{S3}
In this section we describe the asymptotic behavior of an incompressible viscous non-Newtonian fluid in the geometry $\Omega_{\varepsilon}$ given by (\ref{Dominio1}). The proof of the corresponding results will be given in the next sections.

Our results are referred to the stationary non-Newtonian Navier-Stokes system, \begin{equation}
\left\{
\begin{array}
[c]{r@{\;}c@{\;}ll}%
\displaystyle -{\rm div}\, \left(\eta_{p} \left( \mathbb{D}\left[u_\varepsilon\right]\right)\mathbb{D}\left[u_\varepsilon\right] \right)+(u_\varepsilon\cdot \nabla)u_\varepsilon+ \nabla p_{\varepsilon} &
= &
f \text{\ in \ }\Omega_{\varepsilon},\\
{\rm div}\, u_{\varepsilon} & = & 0 \text{\ in \ }\Omega_{\varepsilon},
\end{array}
\right. \label{1}%
\end{equation}
where $u_\varepsilon$ is the velocity, $p_\varepsilon$ is the pressure (scalar) and $p^\prime=p/(p-1)$ is the conjugate exponent of $p$. The right-hand side $f$ is of the form $$f(x)=(f^{\prime}(x^{\prime}),0), \text{\ a.e. \ }x\in \Omega,$$
where $f$ is assumed in $L^{p^\prime}( \omega\times (-h_{\rm max},h_{\rm max}))^2$. This choice of $f$ is usual when we deal with thin domains. Since the thickness of the domain, $\eta_\varepsilon$, is small then the vertical component of the force can be neglected and, moreover the force can be considered independent of the vertical variable. 

Finally, we may consider no-slip boundary conditions without altering the generality of the problem
under consideration, 
\begin{equation}\label{CF}
u_{\varepsilon}=0 \text{\ on \ } \partial \Omega_{\varepsilon}.
\end{equation}
It is well known that (\ref{1})-(\ref{CF}) admits at least one weak solution  $(u_{\varepsilon},p_{\varepsilon})\in W_0^{1,p}(\Omega_{\varepsilon})^3\times L^{p^\prime}_0(\Omega_{\varepsilon})$ with $9/5\leq p<+\infty$ (see Lions \cite{Lions2} and M\'alek {\it et al.} \cite{Nec} for more details).  The space $L^{p^\prime}_0(\Omega_{\varepsilon})$ is the space of functions of $L^{p'}(\Omega_\varepsilon)$ with null integral.

Our aim is to study the asymptotic behavior of $u_{\varepsilon}$ and $p_{\varepsilon}$ when $\varepsilon$ and $\eta_\varepsilon$ tend to zero. For this purpose, as usual when we deal with thin domains, we use the dilatation in the variable $x_3$ given by (\ref{dilatacion}) in order to have the functions defined in the open set  $\widetilde\Omega_\varepsilon$ defined by (\ref{Dominio2}).

Namely, we define $\tilde{u}_{\varepsilon}\in W_0^{1,p}(\widetilde{\Omega}_{\varepsilon})^3$, $\tilde{p}_{\varepsilon}\in L^{p^\prime}_0(\widetilde{\Omega}_{\varepsilon})$ by $$\tilde{u}_{\varepsilon}(x^{\prime},y_3)=u_{\varepsilon}(x^{\prime},\eta_\varepsilon y_3),\text{\ \ }\tilde{p}_{\varepsilon}(x^{\prime},y_3)=p_{\varepsilon}(x^{\prime},\eta_\varepsilon y_3), \text{\ \ } a.e.\text{\ } (x^{\prime},y_3)\in \widetilde{\Omega}_{\varepsilon}.$$
Let us introduce some notation which will be useful in the following. For a vectorial function $v=(v',v_3)$ and a scalar function $w$, we will denote $\mathbb{D}_{x^\prime}\left[v\right]=\frac{1}{2}(D_{x^\prime}v+D_{x^\prime}^t v)$ and $\partial_{y_3}\left[v\right]=\frac{1}{2}(\partial_{y_3}v+\partial_{y_3}^t v)$, where we denote $\partial_{y_3}=(0,0,\frac{\partial}{\partial y_3})^t$, and associated to the change of variables (\ref{dilatacion}), we introduce the operators: $\mathbb{D}_{\eta_\varepsilon}$, $D_{\eta_\varepsilon}$, ${\rm div}_{\eta_\varepsilon}$ and $\nabla_{\eta_\varepsilon}$ by
\begin{equation*}
\mathbb{D}_{\eta_\varepsilon}\left[v\right]=\frac{1}{2}\left(D_{\eta_\varepsilon} v+D^t_{\eta_\varepsilon} v \right),\quad {\rm div}_{\eta_\varepsilon}v={\rm div}_{x^{\prime}}v^{\prime}+\frac{1}{\eta_\varepsilon}\partial_{y_3}v_3\,,
\end{equation*}
\begin{equation*}\begin{array}{l}
(D_{\eta_\varepsilon}v)_{i,j}=\partial_{x_j}v_i\text{\ for \ }i=1,2,3,\ j=1,2,\\
\\
\displaystyle (D_{\eta_\varepsilon}v)_{i,3}=\frac{1}{\eta_\varepsilon}\partial_{y_3}v_i\text{\ for \ }i=1,2,3\,,
\end{array}
\end{equation*} 
\begin{equation*}
\nabla_{\eta_\varepsilon}=(\nabla_{x'} w,{1\over \eta_\varepsilon}\partial_{y_3} w)^t\,.
\end{equation*}

Using the transformation (\ref{dilatacion}), the system (\ref{1}) can be rewritten as
\begin{equation}
\left\{
\begin{array}
[c]{r@{\;}c@{\;}ll}%
\displaystyle \!\!\!\!-{\rm div}_{\eta_\varepsilon} \left( \!\mu \left\vert \mathbb{D}_{\eta_\varepsilon}\left[\tilde{u}_\varepsilon\right] \right\vert^{p-2}\mathbb{D}_{\eta_\varepsilon}\left[\tilde{u}_\varepsilon\right] \right)\!+\! (\tilde u_\varepsilon\cdot \nabla_{\eta_\varepsilon})\tilde u_\varepsilon\!+\! \nabla_{\eta_\varepsilon} \tilde{p}_{\varepsilon} &
= &
f \text{\ in \ }\widetilde{\Omega}_{\varepsilon},\\
{\rm div}_{\eta_\varepsilon} \tilde{u}_{\varepsilon} & = & 0 \text{\ in \ }\widetilde{\Omega}_{\varepsilon},
\end{array}
\right. \label{2}%
\end{equation}
with no-slip  condition, i.e. 
\begin{equation}\label{no-slip_tilde}
\tilde{u}_{\varepsilon}=0 \text{\ on \ } \partial \widetilde{\Omega}_{\varepsilon}.
\end{equation}
Our goal then is to describe the asymptotic behavior of this new sequence $(\tilde{u}_{\varepsilon}$, $\tilde{p}_{\varepsilon})$. 

The sequence of solutions $(\tilde{u}_{\varepsilon}$, $\tilde{p}_{\varepsilon})\in W_0^{1,p}(\widetilde{\Omega}_{\varepsilon})^3 \times  L^{p^\prime}_0(\widetilde{\Omega}_{\varepsilon})$ is not defined in a fixed domain independent of $\varepsilon$ but rather in a varying set $\widetilde{\Omega}_{\varepsilon}$. In order to pass the limit if $\varepsilon$ tends to zero, convergences in fixed Sobolev spaces (defined in $\Omega$) are used which requires first that $(\tilde{u}_{\varepsilon}$, $\tilde{p}_{\varepsilon})$ be extended to the whole domain $\Omega$.

Then, by definition, an extension $(\tilde{v}_{\varepsilon}$, $\tilde{P}_{\varepsilon})\in W_0^{1,p}(\Omega)^3\times L^{p^\prime}_0(\Omega)$ of $(\tilde{u}_{\varepsilon}$, $\tilde{p}_{\varepsilon})$ is defined on $\Omega$ and coincides with $(\tilde{u}_{\varepsilon}$, $\tilde{p}_{\varepsilon})$ on $\widetilde{\Omega}_{\varepsilon}$.

In order to simplify the notation, we define $S$ as the $p$-Laplace operator $$S(\xi)=\left\vert \xi \right\vert^{p-2}\xi,\text{\ \ \ }\forall \xi \in \mathbb{R}^{3\times 3}_{{\rm sym}}, \quad 1< p<+\infty.$$

Our main result referred to the asymptotic behavior of a solution of (\ref{2})-(\ref{no-slip_tilde}) is given by the following theorem. 
\begin{theorem}\label{MainTheorem} 
Assume $9/5\leq p< +\infty$. We distingue three cases depending on the relation between the parameter $\eta_\varepsilon$ with respect to $\varepsilon$:
\begin{itemize}
\item[i)] If $\eta_{\varepsilon}\approx \varepsilon$, with $\eta_\varepsilon/\varepsilon\to \lambda$, $0<\lambda<+\infty$, then the extension $(\eta_\varepsilon^{-{p\over p-1}}\tilde{v}_{\varepsilon},\tilde{P}_{\varepsilon})$ of a solution of (\ref{2})-(\ref{no-slip_tilde}) converges  weakly to  $(\tilde{v},\tilde{P})$ in $W^{1,p}(0,h_{\rm max};L^p(\omega)^3) \times L^{p^\prime}_0(\omega)$ with $\tilde v_3=0$. Moreover, it holds that $\tilde P\in W^{1,p'}(\omega)$ and $(\tilde{V}^\prime,\tilde{P})$ is the unique solution of the nonlinear Reynold problem \begin{equation}
\left\{
\begin{array}
[c]{r@{\;}c@{\;}ll}%
\displaystyle\tilde V^\prime(x^\prime) &
= &
\displaystyle{1\over \mu}A^{\lambda}\left(f^\prime(x^{\prime})-\nabla_{x^\prime} \tilde{P}(x^{\prime}) \right) \text{\ in \ }\omega,\\
\displaystyle {\rm div}_{x^{\prime}}\tilde V^\prime(x^\prime) & = & 0 \text{\ in \ }\omega,\\
\displaystyle \tilde V^\prime(x^\prime) \cdot n & = & 0 \text{\ in \ }\partial\omega,
\end{array}
\right. \label{Main1}%
\end{equation}
where $\tilde V^\prime(x^\prime)=\int_0^{h_{\rm max}}\tilde{v}^\prime(x^\prime,y_3)\,dy_3$ and $A^{\lambda}:\mathbb{R}^2\to \mathbb{R}^2$ is monotone, coercive and defined by 
\begin{equation}\label{def_A_lambda}
A^{\lambda}(\xi^\prime)=\int_{Y}w^{\xi^\prime}(y)\,dy,\quad \forall\,\xi^\prime\in\mathbb{R}^2,
\end{equation}
where   $w^{\xi^\prime}(y)$, for every $\xi^\prime\in\mathbb{R}^2$, denote the unique solution in $W^{1,p}_\sharp(Y)^3$ of the local Stokes problem in 3D
\begin{equation}\label{local_3D}
\left\{\begin{array}{rcl}\displaystyle
-{\rm div}_{\lambda}S\left(\mathbb{D}_\lambda[w^{\xi^\prime}]\right)+\nabla_{\lambda}\pi^{\xi^\prime}&=&\xi^\prime\quad\text{ in \ } Y\,,\\
\displaystyle
{\rm div}_{\lambda} w^{\xi^\prime}&=&0\quad\text{ in \ } Y\,,\\
w^{\xi^\prime}&=&0\quad\text{ in \ } y_3=0,h(y')\,,\\
w^{\xi^\prime}, \pi^{\xi^\prime}\ Y^\prime-\text{periodic}.
\end{array}\right.
\end{equation}
where $\mathbb{D}_{\lambda}\left[\cdot\right]=\lambda \mathbb{D}_{y^\prime}\left[\cdot\right]+ \partial_{y_3}\left[\cdot\right]$,  $\nabla_{\lambda}=(\lambda\nabla_{y^\prime},  \partial_{y_3})^t$ 
 and ${\rm div}_\lambda=\lambda{\rm div}_{y^\prime} + \partial_{y_3}$.
\item[ii)] if $\eta_{\varepsilon}\ll \varepsilon$, then the extension $(\eta_\varepsilon^{-{p\over p-1}}\tilde{v}_{\varepsilon},\tilde{P}_{\varepsilon})$ of a solution of (\ref{2})-(\ref{no-slip_tilde}) converges  weakly to  $(\tilde{v},\tilde{P})$ in $W^{1,p}(0,h_{\rm max};L^p(\omega)^3) \times L^{p^\prime}_0(\omega)$ with $\tilde v_3=0$. Moreover, it holds that $\tilde P\in W^{1,p'}(\omega)$ and $(\tilde{V}^\prime,\tilde{P})$ is the unique solution of the nonlinear Reynolds problem
\begin{equation}
\left\{
\begin{array}
[c]{r@{\;}c@{\;}ll}%
\displaystyle \tilde{V}^\prime(x^{\prime}) &
= &
\displaystyle {1\over 2^{p'\over 2}(p'+1)\mu}A^0\left(\tilde f'(x')-\nabla_{x'}\tilde p(x')\right) \text{\ in \ }\omega,\\
\displaystyle {\rm div}_{x^{\prime}}\, \tilde{V}^\prime(x^{\prime}) & = & 0 \text{\ in \ }\omega,\\
\displaystyle \tilde{V}^\prime(x^{\prime}) \cdot n & = & 0 \text{\ in \ }\partial\omega,
\end{array}
\right. \label{Main2}%
\end{equation}
where $\tilde V(x^\prime)=\int_0^{h_{\rm max}}\tilde{v}(x^\prime,y_3)\,dy_3$ and $A^{0}:\mathbb{R}^2\to \mathbb{R}^2$ is monotone, coercive and   defined by  
\begin{equation}\label{def_A_0}
A^{0}(\xi^\prime)=\int_{Y'}h(y')^{p'+1}\left\vert \xi^\prime+\nabla_{y'}\pi^{\xi^\prime}\right\vert^{p'-2}\left(\xi^\prime+\nabla_{y'}\pi^{\xi^\prime} \right)dy',\quad \forall\,\xi^\prime\in\mathbb{R}^2,
\end{equation}
where,  $\pi^{\xi^\prime}(y')$, for every $\xi^\prime\in\mathbb{R}^2$, denote the unique solution in $W^{1,p'}_\sharp(Y^\prime)\cap L^{p'}_0(Y')$ of the local Reynolds problem in 2D
\begin{equation}\label{local_2D_infty}
\left\{\begin{array}{rcl}\displaystyle
{\rm div}_{y^\prime}\left(h(y')^{p'+1}\left\vert \xi^\prime+\nabla_{y'}\pi^{\xi^\prime}\right\vert^{p'-2}\left(\xi^\prime+\nabla_{y'}\pi^{\xi^\prime} \right)\right)&=&0\quad\text{ in \ } Y^\prime,\\
\left(h(y')^{p'+1}\left\vert \xi^\prime+\nabla_{y'}\pi^{\xi^\prime}\right\vert^{p'-2}\left(\xi^\prime+\nabla_{y'}\pi^{\xi^\prime} \right)\right)\cdot  n&=&0\quad\text{ in \ } \partial Y^\prime.
\end{array}\right. 
\end{equation}
\item[iii)] If $\eta_{\varepsilon}\gg \varepsilon$, then the extension $(\eta_\varepsilon^{-{p\over p-1}}\tilde{v}_\varepsilon,\tilde{P}_\varepsilon)$ of the solution of (\ref{2})-(\ref{no-slip_tilde}) converges  weakly to $(\tilde v, \tilde P)$ in $W^{1,p}(0,h_{\rm min};L^p(\omega)^3)\times L^{p^\prime}_0(\omega)$, with $\tilde v_3=0$. Moreover, it holds that $\tilde P\in W^{1,p'}(\omega)$ and $(\tilde{V}^\prime,\tilde{P})$ is the unique solution of the nonlinear Reynolds problem
\begin{equation}\label{thm_Homogenized_3}
\left\{\begin{array}{l}\displaystyle
 \tilde{V}^\prime(x^{\prime}) 
=
 {h_{\rm min}^{p'+1}\over 2^{p'\over 2}(p'+1)\mu^{p'-1}}\left|\tilde f'(x')-\nabla_{x'}\tilde p(x')\right|^{p'-2}\!\!\left(\tilde f'(x')-\nabla_{x'}\tilde p(x')\right)\,,
\\
\displaystyle {\rm div}_{x^{\prime}}\, \tilde{V}^\prime(x^{\prime}) = 0 \text{\ in \ }\omega,\\
\displaystyle \tilde{V}^\prime(x^{\prime}) \cdot n = 0 \text{\ in \ }\partial\omega\,,\end{array}\right.
\end{equation}
where $\tilde V(x^\prime)=\int_0^{h_{\rm min}}\tilde{v}(x^\prime,y_3)\,dy_3$.
\end{itemize}
\end{theorem}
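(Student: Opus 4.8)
The plan is to combine uniform a priori estimates in the thin rough domain, an unfolding procedure adapted to the two small parameters $\varepsilon$ and $\eta_\varepsilon$ together with the rescaling (\ref{dilatacion}), and Minty--Browder monotonicity to pass to the limit; the three regimes share the same first steps and differ only in the form of the limiting cell problem. First I would establish the energy estimates (the content of Section \ref{S4}): taking $\tilde u_\varepsilon$ as test function in the weak formulation of (\ref{2})--(\ref{no-slip_tilde}), the convective term drops out by incompressibility and the no-slip condition, which yields a bound on $\|\mathbb{D}_{\eta_\varepsilon}[\tilde u_\varepsilon]\|_{L^p(\widetilde\Omega_\varepsilon)}$. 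A Poincar\'e inequality in the vertical direction, which carries a factor $\eta_\varepsilon$, then shows that the correct normalization is $\eta_\varepsilon^{-{p\over p-1}}\tilde u_\varepsilon$ and provides uniform bounds on $\nabla_{\eta_\varepsilon}(\eta_\varepsilon^{-{p\over p-1}}\tilde u_\varepsilon)$ in $L^p$. For the pressure I would extend $\tilde p_\varepsilon$ to the fixed domain $\Omega$ via a restriction/extension operator preserving uniform $L^{p'}_0$ bounds, and use the momentum equation to show that its vertical derivative is of lower order, so that the limit pressure $\tilde P$ depends only on $x'$ and in fact lies in $W^{1,p'}(\omega)$. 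The hypothesis $p\geq 9/5$ enters exactly here: it guarantees both the existence of a weak solution and that the convective term is subcritical, hence negligible in all the limit passages.

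Given these bounds (Section \ref{S5}), I would extract subsequences with $\eta_\varepsilon^{-{p\over p-1}}\tilde v_\varepsilon\rightharpoonup\tilde v$ in $W^{1,p}(0,h_{\rm max};L^p(\omega)^3)$ and $\tilde P_\varepsilon\rightharpoonup\tilde P$, then apply the unfolding operator in the horizontal variable (rescaling $\omega$-cells of size $\varepsilon$ onto $Y'$) to obtain the two-scale limit of the rescaled velocity as a function of $(x',y)$ with $y\in Y$. In the critical regime $\eta_\varepsilon/\varepsilon\to\lambda$ the anisotropic operators $\mathbb{D}_{\eta_\varepsilon}$ and $\nabla_{\eta_\varepsilon}$ unfold into $\mathbb{D}_\lambda$ and $\nabla_\lambda$, so localized test functions recover the 3D local $p$-Stokes problem (\ref{local_3D}) and averaging over $Y$ gives $A^\lambda$ as in (\ref{def_A_lambda}). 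In the regime $\eta_\varepsilon\ll\varepsilon$ the horizontal derivatives become lower order: the cell problem decouples into an explicit power-law Poiseuille profile in $y_3$ --- whose integration produces the factor $h(y')^{p'+1}/(2^{p'/2}(p'+1))$ --- and the 2D local Reynolds problem (\ref{local_2D_infty}) for the horizontal part, yielding $A^0$ as in (\ref{def_A_0}). In the regime $\eta_\varepsilon\gg\varepsilon$ the roughness oscillates too fast for the velocity to survive in the zone $h_{\rm min}<y_3<h_{\rm max}$, where $\tilde v$ is forced to vanish, and one is left with the flat Reynolds law on $\Omega^-$ as in (\ref{thm_Homogenized_3}).

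The main obstacle is passing to the limit in the nonlinear term $S(\mathbb{D}_{\eta_\varepsilon}[\tilde u_\varepsilon])$, for which weak convergence alone is insufficient. I would use the monotonicity of $S$: for two-scale test functions $\varphi_\varepsilon$ of the appropriate oscillating form,
$$\int_{\widetilde\Omega_\varepsilon}\bigl(S(\mathbb{D}_{\eta_\varepsilon}[\tilde u_\varepsilon])-S(\mathbb{D}_{\eta_\varepsilon}[\varphi_\varepsilon])\bigr):\bigl(\mathbb{D}_{\eta_\varepsilon}[\tilde u_\varepsilon]-\mathbb{D}_{\eta_\varepsilon}[\varphi_\varepsilon]\bigr)\,dx'\,dy_3\geq 0,$$
in which the term $\int S(\mathbb{D}_{\eta_\varepsilon}[\tilde u_\varepsilon]):\mathbb{D}_{\eta_\varepsilon}[\tilde u_\varepsilon]$ is rewritten, via the equation, through the force, pressure and (vanishing) convective contributions, all of which converge. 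Passing to the limit and then letting $\varphi_\varepsilon\to\tilde v$ identifies the two-scale limit of $S(\mathbb{D}_{\eta_\varepsilon}[\tilde u_\varepsilon])$ with $S$ of the two-scale limit, which is precisely what makes the cell problems appear. Combining this with the limit of the incompressibility constraint and of the lateral no-slip condition gives ${\rm div}_{x'}\tilde V'=0$ in $\omega$ and $\tilde V'\cdot n=0$ on $\partial\omega$, where $\tilde V'$ is the vertical average of $\tilde v'$. Finally, strict monotonicity and coercivity of $A^\lambda$ (resp. $A^0$, resp. the explicit power-law map) render each limiting Reynolds problem uniquely solvable for $\tilde P$ in $L^{p'}_0(\omega)$, hence for $\tilde V'$; uniqueness of the limit then promotes the convergence to the whole sequence.
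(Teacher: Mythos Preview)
Your proposal is correct and follows essentially the same route as the paper: a priori estimates via Poincar\'e--Korn in the thin domain, extension of the pressure through a Tartar-type restriction operator $R_p^\varepsilon$ adapted to $W^{1,p}$ and to the anisotropic scaling, unfolding in the horizontal variable combined with the vertical dilation, and then Minty's monotonicity trick to identify the limit of $S(\mathbb{D}_{\eta_\varepsilon}[\tilde u_\varepsilon])$, with the three regimes distinguished exactly as you describe. The only point the paper makes more explicit than you do is that, in the Minty step, the pressure term surviving against the oscillating test function requires \emph{strong} (not just weak) convergence of $\hat P_\varepsilon$, which the authors obtain by invoking \cite{bourgeat_fissure} and the unfolding property in \cite{Ciora2}; you should flag this when you say ``all of which converge''.
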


\begin{remark} The monotonicity and coerciveness properties of $A^\lambda$ and $A^0$ given by (\ref{def_A_lambda}) and (\ref{def_A_0}), respectively, can be found in Bourgeat {\it et al.} \cite{Marusic1}.
\end{remark}
\begin{remark} This is a preliminary step towards a complete generalization of the papers of Bayada and Chambat \cite{Bayada_Chambat_2,Bayada_Chambat} in order to consider rough surfaces of type $\eta_\varepsilon h(x',x'/\varepsilon)$ (locally periodic oscillatory boundaries), which are more practical from the engineering point of view. We think that this could be successfully managed by an adaptation of the recent version of the unfolding method introduced by Arrieta and Villanueva-Pesqueira \cite{Arrieta}, which will be object of a future study.

 \end{remark}
\section{{\it A priori} estimates}\label{S4}
Let us begin with the classical Poincar\'e and  Korn inequalities.

\begin{lemma} (Poincar\'e's inequality) For $w\in W^{1,p}_0(\Omega_\varepsilon)^3$, $1\leq p<+\infty$, 
\begin{equation}\label{Poincare}
\|w\|_{L^p(\Omega_\varepsilon)^3}\leq C\eta_\varepsilon\|\partial_{x_3}w\|_{L^p(\Omega_\varepsilon)^3}\,,
\end{equation}
where $C$ is independent of $w$ and $\varepsilon$.
\end{lemma}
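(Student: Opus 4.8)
The plan is to establish the anisotropic Poincaré inequality \eqref{Poincare} by reducing to the one-dimensional case in the $x_3$ direction, exactly as one does for the classical Poincaré inequality, while carefully tracking the dependence on the height of the domain. First I would fix $w\in W^{1,p}_0(\Omega_\varepsilon)^3$. Since $\Omega_\varepsilon\subset\{x:x'\in\omega,\ 0<x_3<\eta_\varepsilon h_{\rm max}\}$ and $w$ vanishes on $\partial\Omega_\varepsilon$, I would extend $w$ by zero to the cylinder $Q_\varepsilon:=\omega\times(0,\eta_\varepsilon h_{\rm max})$; the extension lies in $W^{1,p}_0(Q_\varepsilon)^3$ with $\|\partial_{x_3}w\|_{L^p(Q_\varepsilon)}=\|\partial_{x_3}w\|_{L^p(\Omega_\varepsilon)}$. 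It then suffices to prove the inequality on the cylinder $Q_\varepsilon$, which is the content of the standard argument.

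For the cylinder, I would work componentwise and, for each fixed $x'\in\omega$, use the fundamental theorem of calculus: since $w(x',0)=0$, one has $w(x',x_3)=\int_0^{x_3}\partial_{t}w(x',t)\,dt$ for a.e.\ $x'$. Applying Hölder's inequality in the variable $t$ on the interval $(0,\eta_\varepsilon h_{\rm max})$ gives the pointwise bound
\begin{equation*}
|w(x',x_3)|^p\leq (\eta_\varepsilon h_{\rm max})^{p-1}\int_0^{\eta_\varepsilon h_{\rm max}}|\partial_t w(x',t)|^p\,dt\,.
\end{equation*}
Integrating this in $x_3$ over $(0,\eta_\varepsilon h_{\rm max})$ produces an extra factor $\eta_\varepsilon h_{\rm max}$, and then integrating in $x'$ over $\omega$ yields
\begin{equation*}
\|w\|_{L^p(Q_\varepsilon)^3}^p\leq (\eta_\varepsilon h_{\rm max})^{p}\,\|\partial_{x_3}w\|_{L^p(Q_\varepsilon)^3}^p\,.
\end{equation*}
Taking $p$-th roots and restricting back to $\Omega_\varepsilon$ gives \eqref{Poincare} with the explicit constant $C=h_{\rm max}$, which is independent of $w$ and of $\varepsilon$ (note $h_{\rm max}$ depends only on the fixed profile $h$). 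The case $p=1$ is handled the same way, with Hölder replaced by the trivial estimate $|w(x',x_3)|\leq\int_0^{\eta_\varepsilon h_{\rm max}}|\partial_t w(x',t)|\,dt$.

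There is essentially no serious obstacle here; the only point requiring a little care is the justification that a $W^{1,p}_0(\Omega_\varepsilon)$ function extends by zero to a $W^{1,p}_0$ function on the enclosing cylinder and that the trace $w(x',0)=0$ may legitimately be used in the one-dimensional Newton--Leibniz formula for a.e.\ $x'$. Both are standard: the zero extension across the part of $\partial\Omega_\varepsilon$ where $w$ vanishes preserves weak differentiability, and by Fubini together with the characterization of $W^{1,p}$ via absolute continuity on almost every line parallel to the $x_3$-axis, the slice $t\mapsto w(x',t)$ is absolutely continuous with the correct boundary value for a.e.\ $x'\in\omega$. Thus the only thing one must be attentive to is producing the clean factor $\eta_\varepsilon$ (rather than, say, $\mathrm{diam}(\Omega_\varepsilon)$) by integrating only in the thin direction $x_3$, which is precisely what makes the constant uniform in $\varepsilon$.
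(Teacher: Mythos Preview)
Your proof is correct and is the standard argument. The paper does not give its own proof of this lemma; it simply refers the reader to Lemma~1.2 in Mikeli\'c and Tapiero~\cite{MT}, whose argument is essentially the same one-dimensional Poincar\'e estimate in the thin $x_3$-direction that you wrote out.
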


\begin{lemma} (Korn's inequality) For $w\in W^{1,p}_0(\Omega_\varepsilon)^3$, $1<p<+\infty$,
\begin{equation}\label{Korn}
\|D w\|_{L^p(\Omega_\varepsilon)^{3\times 3}}\leq C\|\mathbb{D}[w]\|_{L^p(\Omega_\varepsilon)^{3\times 3}}\,,
\end{equation}
where $C$ is independent of $w$ and $\varepsilon$.
\end{lemma}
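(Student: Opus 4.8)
The plan is to deduce the inequality from the Korn inequality on the whole space $\mathbb{R}^3$, whose constant depends only on $p$; in this way the thinness of $\Omega_\varepsilon$ and the decay $\eta_\varepsilon\to 0$ play no role whatsoever. Since $w\in W^{1,p}_0(\Omega_\varepsilon)^3$, its extension by zero, still called $w$, belongs to $W^{1,p}(\mathbb{R}^3)^3$ and satisfies $\|Dw\|_{L^p(\mathbb{R}^3)}=\|Dw\|_{L^p(\Omega_\varepsilon)}$ and $\|\mathbb{D}[w]\|_{L^p(\mathbb{R}^3)}=\|\mathbb{D}[w]\|_{L^p(\Omega_\varepsilon)}$. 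It is essential here that the no-slip condition holds on \emph{all} of $\partial\Omega_\varepsilon$: this is exactly what makes the zero extension belong to $W^{1,p}(\mathbb{R}^3)^3$ without any singular contribution to the full gradient across $\Sigma_\varepsilon$ or the lateral boundary, so that no $\varepsilon$-dependent extension operator is needed and the constant obtained below is automatically uniform in $\varepsilon$. It therefore suffices to prove $\|Dw\|_{L^p(\mathbb{R}^3)}\le C(p)\|\mathbb{D}[w]\|_{L^p(\mathbb{R}^3)}$, which by density may be done for $w\in C^\infty_c(\mathbb{R}^3)^3$.

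For such $w$ I would start from the pointwise identity $\partial_k\partial_l w_j=\partial_k\mathbb{D}[w]_{lj}+\partial_l\mathbb{D}[w]_{kj}-\partial_j\mathbb{D}[w]_{kl}$, which upon summing in $k=l$ gives $\Delta w_j=2\sum_k\partial_k\mathbb{D}[w]_{kj}-\partial_j({\rm tr}\,\mathbb{D}[w])$. A Fourier-transform computation then solves this for $\partial_l w_j$ and exhibits each first derivative of $w$ as a fixed linear combination, with bounded coefficients, of second-order Riesz transforms applied to the components of $\mathbb{D}[w]$. Since these are Calder\'on--Zygmund operators, bounded on $L^p(\mathbb{R}^3)$ for $1<p<+\infty$ with norm depending only on $p$ and the dimension $3$, summing over $j$ and $l$ yields $\|Dw\|_{L^p(\mathbb{R}^3)}\le C(p)\|\mathbb{D}[w]\|_{L^p(\mathbb{R}^3)}$, and restricting back to $\Omega_\varepsilon$ gives (\ref{Korn}) with $C=C(p)$ independent of $w$ and $\varepsilon$.

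There is no real obstacle in this lemma: it is standard, and the only subtle point — that the constant must not depend on $\varepsilon$ — is handled for free by the homogeneous Dirichlet condition, which lets one work on $\mathbb{R}^3$. If one prefers to skip the singular-integral step altogether, one may instead invoke the known $L^p$ Korn inequality in $W^{1,p}_0$ of an arbitrary open set, for which the constant depends on $p$ alone (see the classical references on Korn's inequality, e.g. Ne\v{c}as or the monograph of Ole\u{\i}nik, Shamaev and Yosifian). The same zero-extension trick, combined with the one-dimensional Poincar\'e inequality in the $x_3$-variable on $(0,\eta_\varepsilon h_{\rm max})$, also yields the preceding Poincar\'e estimate (\ref{Poincare}), the factor $\eta_\varepsilon$ coming precisely from the length of that interval.
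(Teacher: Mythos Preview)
Your argument is correct: extending by zero to $\mathbb{R}^3$ and invoking the whole-space Korn inequality (via Riesz transforms or any standard reference) is exactly the right way to get a constant depending only on $p$, and your remark that the full Dirichlet condition is what makes this work is the key point. The paper itself does not give a proof at all but simply refers to Lemmas~1.2 and~1.3 of Mikeli\'c and Tapiero \cite{MT}, so your write-up is in fact more detailed than what appears there; the underlying idea is the same.
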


\begin{proof} See Lemmas 1.2 and 1.3 in Mikeli\'c  and Tapiero \cite{MT}.
\end{proof}

Let us obtain some {\it a priori} estimates for velocities $u_\varepsilon$ and $\tilde{u}_{\varepsilon}$.
\begin{lemma}\label{Lemma_a3} Assume that $9/5\leq p<+\infty$. 
There exists a constant $C$ independent of $\varepsilon$, such that a solution $u_\varepsilon$ of problem (\ref{1})-(\ref{CF}) and the corresponding rescaled solution, $\tilde{u}_{\varepsilon}$, of the problem (\ref{2})-(\ref{no-slip_tilde}) satisfy
\begin{equation}\label{estim_u_1}
\left\Vert {u}_{\varepsilon}\right\Vert_{L^p({\Omega}_{\varepsilon})^3}\leq C\eta_\varepsilon^{{2p-1\over p(p-1)}+1},\quad \left\Vert \mathbb{D}\left[{u}_{\varepsilon}\right]\right\Vert_{L^p({\Omega}_{\varepsilon})^{{3\times3}}}\leq C\eta_\varepsilon^{2p-1\over p(p-1)}\,,
\end{equation}
\begin{equation}\label{estim_Du_1}
\left\Vert D{u}_{\varepsilon}\right\Vert_{L^p({\Omega}_{\varepsilon})^{{3\times3}}}\leq C\eta_\varepsilon^{2p-1\over p(p-1)}\,,
\end{equation}
\begin{equation}\label{a3}
\left\Vert \tilde{u}_{\varepsilon}\right\Vert_{L^p(\widetilde{\Omega}_{\varepsilon})^3}\leq C\eta_\varepsilon^{p\over p-1},\quad \left\Vert \mathbb{D}_{\eta_\varepsilon}\left[\tilde{u}_{\varepsilon}\right]\right\Vert_{L^p(\widetilde{\Omega}_{\varepsilon})^{{3\times3}}}\leq C\eta_\varepsilon^{1\over p-1},
\end{equation}
\begin{equation}\label{Derivada_U}
\left\Vert D_{\eta_\varepsilon}\tilde{u}_{\varepsilon}\right\Vert_{L^p(\widetilde{\Omega}_{\varepsilon})^{{3\times3}}}\leq C\eta_\varepsilon^{1\over p-1}.
\end{equation}
\end{lemma}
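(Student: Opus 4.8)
The natural starting point is the energy estimate obtained by testing the weak formulation of (\ref{1})--(\ref{CF}) with $u_\varepsilon$ itself. Since the convective term $(u_\varepsilon\cdot\nabla)u_\varepsilon$ is antisymmetric under this pairing (it vanishes on divergence-free fields with zero trace, which is exactly where the hypothesis $9/5\le p$ enters — it guarantees $u_\varepsilon\in W^{1,p}_0(\Omega_\varepsilon)^3$ is an admissible test function and $(u_\varepsilon\cdot\nabla)u_\varepsilon\cdot u_\varepsilon$ is integrable with zero integral), and the pressure term drops by incompressibility, one is left with
\begin{equation*}
\mu\int_{\Omega_\varepsilon}\bigl|\mathbb{D}[u_\varepsilon]\bigr|^p\,dx=\int_{\Omega_\varepsilon}f\cdot u_\varepsilon\,dx\,.
\end{equation*}
First I would bound the right-hand side by H\"older's inequality, $\|f\|_{L^{p'}(\Omega_\varepsilon)^3}\|u_\varepsilon\|_{L^p(\Omega_\varepsilon)^3}$, then apply Poincar\'e's inequality (\ref{Poincare}) to get $\|u_\varepsilon\|_{L^p(\Omega_\varepsilon)^3}\le C\eta_\varepsilon\|\partial_{x_3}u_\varepsilon\|_{L^p(\Omega_\varepsilon)^3}\le C\eta_\varepsilon\|Du_\varepsilon\|_{L^p(\Omega_\varepsilon)^{3\times 3}}$, and then Korn's inequality (\ref{Korn}) to replace $\|Du_\varepsilon\|_{L^p}$ by $C\|\mathbb{D}[u_\varepsilon]\|_{L^p}$. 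One also needs that $\|f\|_{L^{p'}(\Omega_\varepsilon)^3}\le C\eta_\varepsilon^{1/p'}=C\eta_\varepsilon^{(p-1)/p}$, which follows from $f$ being independent of $x_3$ and the domain having height $O(\eta_\varepsilon)$.

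Combining these, the energy identity becomes $\mu\|\mathbb{D}[u_\varepsilon]\|_{L^p}^p\le C\eta_\varepsilon^{(p-1)/p}\cdot\eta_\varepsilon\|\mathbb{D}[u_\varepsilon]\|_{L^p}$, i.e. $\|\mathbb{D}[u_\varepsilon]\|_{L^p}^{p-1}\le C\eta_\varepsilon^{(2p-1)/p}$, which gives $\|\mathbb{D}[u_\varepsilon]\|_{L^p(\Omega_\varepsilon)^{3\times 3}}\le C\eta_\varepsilon^{(2p-1)/(p(p-1))}$ — the second bound in (\ref{estim_u_1}). The estimate (\ref{estim_Du_1}) then follows immediately from Korn (\ref{Korn}), and the first bound in (\ref{estim_u_1}) follows by feeding (\ref{estim_Du_1}) back into Poincar\'e (\ref{Poincare}), picking up one extra power of $\eta_\varepsilon$. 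The rescaled estimates (\ref{a3}) and (\ref{Derivada_U}) are then pure bookkeeping: under the dilation $y_3=x_3/\eta_\varepsilon$ one has $\dd x=\eta_\varepsilon\,\dd x'\dd y_3$, so each $L^p$ norm over $\Omega_\varepsilon$ picks up a factor $\eta_\varepsilon^{-1/p}$ when rewritten over $\widetilde\Omega_\varepsilon$, while the operators $\mathbb{D}_{\eta_\varepsilon}$, $D_{\eta_\varepsilon}$ are precisely $\mathbb{D}$, $D$ expressed in the new variable; tracking the powers turns $\eta_\varepsilon^{(2p-1)/(p(p-1))}$ into $\eta_\varepsilon^{(2p-1)/(p(p-1))-1/p}=\eta_\varepsilon^{1/(p-1)}$ and $\eta_\varepsilon^{(2p-1)/(p(p-1))+1}$ into $\eta_\varepsilon^{p/(p-1)}$.

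The main obstacle is the convective term: one must justify rigorously that $\int_{\Omega_\varepsilon}\bigl((u_\varepsilon\cdot\nabla)u_\varepsilon\bigr)\cdot u_\varepsilon\,dx=0$. For this the exponent constraint $p\ge 9/5$ is essential, since it is exactly the threshold (in 3D) under which the trilinear form $b(u,v,w)=\int(u\cdot\nabla)v\cdot w$ is well-defined and continuous on $W^{1,p}_0$, making $u_\varepsilon$ itself a legitimate test function; below this exponent the weak formulation has to be posed differently and the cancellation is not available. I would cite Lions \cite{Lions2} and M\'alek \emph{et al.} \cite{Nec} (already invoked for existence) for this point. A secondary technical care is that all constants from Poincar\'e and Korn must be $\varepsilon$-independent, which is guaranteed by the two lemmas quoted above since those inequalities were stated with that uniformity; one should only note that the Korn constant depends on the shape of $\Omega_\varepsilon$ through $\omega$ and $h$ but not on $\eta_\varepsilon$ or $\varepsilon$, a fact established in \cite{MT}.
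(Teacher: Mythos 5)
Your proposal is correct and follows essentially the same route as the paper: testing with $u_\varepsilon$ to get the energy identity $\mu\|\mathbb{D}[u_\varepsilon]\|_{L^p(\Omega_\varepsilon)}^p=\int_{\Omega_\varepsilon}f\cdot u_\varepsilon\,dx$, then H\"older with $\|f\|_{L^{p'}(\Omega_\varepsilon)}\le C\eta_\varepsilon^{1/p'}$, then Poincar\'e and Korn to close the estimate for $\mathbb{D}[u_\varepsilon]$, feeding back through Korn and Poincar\'e for $Du_\varepsilon$ and $u_\varepsilon$, and finally rescaling via the dilation $y_3=x_3/\eta_\varepsilon$. The extra justification you supply — that the convective term cancels and that $p\ge 9/5$ is exactly the threshold making $u_\varepsilon$ an admissible test function — is left implicit in the paper's proof but is both correct and a welcome clarification.
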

\begin{proof}
Multiplying by ${u}_{\varepsilon}$ in the first equation of (\ref{1}) and integrating over ${\Omega}_{\varepsilon}$, we have
\begin{eqnarray}\label{a1}
\mu \|\mathbb{D}\left[{u}_{\varepsilon}\right]\|^p_{L^{p}(\Omega_\varepsilon)^{3\times 3}}=\int_{{\Omega}_{\varepsilon}}f\cdot {u}_{\varepsilon}\,dx.
\end{eqnarray}
Using H${\rm \ddot{o}}$lder's inequality and the assumption of $f$, we obtain that 
\begin{eqnarray*}
\int_{{\Omega}_{\varepsilon}}f\cdot {u}_{\varepsilon}\,dx\leq C\eta_\varepsilon^{1\over p'} \left\Vert {u}_{\varepsilon} \right\Vert_{L^p({\Omega}_{\varepsilon})^3},
\end{eqnarray*}
and by (\ref{a1}), we have
\begin{equation*}\label{a2}
\left\Vert \mathbb{D}\left[{u}_{\varepsilon}\right]\right\Vert_{L^p({\Omega}_{\varepsilon})^{{3\times3}}}^p\leq C\eta_\varepsilon^{1\over p'} \left\Vert {u}_{\varepsilon} \right\Vert_{L^p({\Omega}_{\varepsilon})^3}.
\end{equation*}

Taking into account  (\ref{Poincare}) and (\ref{Korn}), we obtain the second estimate in (\ref{estim_u_1}). 

Consequently, from (\ref{Korn}) and the second estimate in (\ref{estim_u_1}), we get (\ref{estim_Du_1}). Finally, taking into account   (\ref{Poincare}) and (\ref{estim_Du_1}), we obtain the first estimate in (\ref{estim_u_1}).

By means of the dilatation (\ref{dilatacion}), we get (\ref{a3}) and (\ref{Derivada_U}).\par
\end{proof}

\begin{lemma}\label{lem_estim_nabla_pressure} Assume that $9/5\leq p<+\infty$. There exists a constant $C$ independen of $\varepsilon$, such that a solution $\tilde{p}_{\varepsilon}$ of the problem (\ref{2})-(\ref{no-slip_tilde}) satisfies
\begin{equation}\label{estim_nabla_p_1}\|\nabla_{\eta_\varepsilon}\tilde p_\varepsilon\|_{W^{-1,p'}(\widetilde \Omega_\varepsilon)^3}\leq C\,.
\end{equation}
\end{lemma}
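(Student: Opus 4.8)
\textbf{Proof plan for Lemma~\ref{lem_estim_nabla_pressure}.}
The plan is to recover the gradient of the pressure directly from the momentum equation in~(\ref{2}), bounding each term in the dual norm $W^{-1,p'}(\widetilde\Omega_\varepsilon)^3$ by a test-function pairing against $\varphi\in W_0^{1,p}(\widetilde\Omega_\varepsilon)^3$. From the weak formulation of~(\ref{2}), for all such $\varphi$,
\begin{equation*}
\langle\nabla_{\eta_\varepsilon}\tilde p_\varepsilon,\varphi\rangle
= \int_{\widetilde\Omega_\varepsilon} f\cdot\varphi\,dx'dy_3
- \mu\int_{\widetilde\Omega_\varepsilon}\left\vert\mathbb{D}_{\eta_\varepsilon}[\tilde u_\varepsilon]\right\vert^{p-2}\mathbb{D}_{\eta_\varepsilon}[\tilde u_\varepsilon]:\mathbb{D}_{\eta_\varepsilon}[\varphi]\,dx'dy_3
- \int_{\widetilde\Omega_\varepsilon}(\tilde u_\varepsilon\cdot\nabla_{\eta_\varepsilon})\tilde u_\varepsilon\cdot\varphi\,dx'dy_3.
\end{equation*}
So the task reduces to estimating the three terms on the right. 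The key point is that the $\eta_\varepsilon$-scaled operators $\mathbb{D}_{\eta_\varepsilon}$, $\nabla_{\eta_\varepsilon}$ on $\widetilde\Omega_\varepsilon$ are the images under the dilatation~(\ref{dilatacion}) of the ordinary operators on $\Omega_\varepsilon$, so I may freely translate between the two pictures and use the a priori bounds of Lemma~\ref{Lemma_a3}.

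First I would handle the force term: by H\"older's inequality and the hypothesis $f\in L^{p'}$, $\int_{\widetilde\Omega_\varepsilon} f\cdot\varphi \le \|f\|_{L^{p'}(\widetilde\Omega_\varepsilon)^3}\|\varphi\|_{L^p(\widetilde\Omega_\varepsilon)^3} \le C\|\varphi\|_{W_0^{1,p}(\widetilde\Omega_\varepsilon)^3}$, using Poincar\'e's inequality~(\ref{Poincare}) (rescaled) to absorb the $L^p$ norm of $\varphi$; note the Poincar\'e constant on $\widetilde\Omega_\varepsilon$ is of order one after dilatation, so this is clean. Second, the viscous term: by H\"older with exponents $p'$ and $p$ it is bounded by $\mu\big\|\,|\mathbb{D}_{\eta_\varepsilon}[\tilde u_\varepsilon]|^{p-1}\big\|_{L^{p'}(\widetilde\Omega_\varepsilon)}\|\mathbb{D}_{\eta_\varepsilon}[\varphi]\|_{L^p(\widetilde\Omega_\varepsilon)^{3\times3}} = \mu\|\mathbb{D}_{\eta_\varepsilon}[\tilde u_\varepsilon]\|_{L^p(\widetilde\Omega_\varepsilon)^{3\times3}}^{p-1}\|\mathbb{D}_{\eta_\varepsilon}[\varphi]\|_{L^p}$, and by~(\ref{a3}) the first factor is $\le C(\eta_\varepsilon^{1/(p-1)})^{p-1} = C\eta_\varepsilon$, so this term is $\le C\eta_\varepsilon\|\varphi\|_{W_0^{1,p}(\widetilde\Omega_\varepsilon)^3}$ and in particular bounded.

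The convective term is the main obstacle and the reason the hypothesis $p\ge 9/5$ appears. I would return to the undilated domain $\Omega_\varepsilon$, where the term is $\int_{\Omega_\varepsilon}(u_\varepsilon\cdot\nabla)u_\varepsilon\cdot\psi\,dx$ for the corresponding test function $\psi$; after integration by parts (using ${\rm div}\,u_\varepsilon=0$ and $u_\varepsilon=0$ on the boundary) it equals $-\int_{\Omega_\varepsilon}(u_\varepsilon\otimes u_\varepsilon):D\psi\,dx$, bounded by $\|u_\varepsilon\|_{L^{2p'}(\Omega_\varepsilon)^3}^2\|D\psi\|_{L^p(\Omega_\varepsilon)^{3\times3}}$. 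The condition $p\ge 9/5$ guarantees $2p'\le p^*=3p/(3-p)$ (when $p<3$; for $p\ge 3$ the embedding is immediate), so the Sobolev–Poincar\'e inequality on $W_0^{1,p}(\Omega_\varepsilon)^3$ controls $\|u_\varepsilon\|_{L^{2p'}}$ by $\|Du_\varepsilon\|_{L^p}$ up to an $\eta_\varepsilon$-power; combining with the estimate $\|Du_\varepsilon\|_{L^p(\Omega_\varepsilon)^{3\times3}}\le C\eta_\varepsilon^{(2p-1)/(p(p-1))}$ from~(\ref{estim_Du_1}), and tracking all the $\eta_\varepsilon$ exponents through the dilatation back to $\widetilde\Omega_\varepsilon$, one finds the convective contribution is $O(1)$ — here one must be careful that the anisotropic scaling of $\nabla_{\eta_\varepsilon}$ versus $\nabla$ does not cost extra negative powers of $\eta_\varepsilon$, which is exactly where the precise exponent $9/5$ is used. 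Putting the three bounds together and taking the supremum over $\varphi$ with $\|\varphi\|_{W_0^{1,p}(\widetilde\Omega_\varepsilon)^3}\le1$ yields~(\ref{estim_nabla_p_1}). Finally I would remark that the dependence of Sobolev and Poincar\'e constants on $\widetilde\Omega_\varepsilon$ is harmless because, as above, after the dilatation these constants are bounded uniformly in $\varepsilon$ (the rescaled domains $\widetilde\Omega_\varepsilon$ all sit inside the fixed $\Omega$ and contain $\Omega^-$).
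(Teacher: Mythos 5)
Your overall strategy — recover $\nabla_{\eta_\varepsilon}\tilde p_\varepsilon$ from the weak formulation and bound the viscous, force, and convective contributions separately in the $W^{-1,p'}$ duality pairing — is exactly the paper's. The force term is handled the same way. Your route through the convective term, however, is genuinely different: you integrate by parts fully (in both $x'$ and $y_3$), rewrite everything as $-\int_{\Omega_\varepsilon}(u_\varepsilon\otimes u_\varepsilon):D\psi\,dx$ in the undilated domain, apply Sobolev on $W^{1,p}_0(\Omega_\varepsilon)$ (with a constant uniform in $\varepsilon$ thanks to extension by zero) together with H\"older on the small measure $|\Omega_\varepsilon|\sim\eta_\varepsilon$, and then transport the powers of $\eta_\varepsilon$ back through the dilatation. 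The paper instead integrates by parts only in $x'$, which produces an anisotropic split — a tensor term $\int\tilde u_\varepsilon\tilde\otimes\tilde u_\varepsilon:D_{x'}\tilde\varphi$ plus two terms with explicit $\eta_\varepsilon^{-1}$ prefactors involving $\partial_{y_3}\tilde u_{\varepsilon,3}$ and $\partial_{y_3}\tilde u_\varepsilon$ — and then controls these via an interpolation between $L^p$ and $L^{p^*}$ on the fixed-height domain $\widetilde\Omega_\varepsilon$. Both approaches pivot on the inequality $2p'\le p^*$, i.e.\ $p\ge 9/5$; the paper's anisotropic split keeps the bookkeeping entirely in the $(x',y_3)$ variables and makes the exponent $\eta_\varepsilon^{2/(p-1)}$ (resp.\ $\eta_\varepsilon^{2p/(p-1)}$ for $p\ge 3$) explicit, whereas your route shifts the bookkeeping into the Jacobian factor $\eta_\varepsilon^{-1}$ from the change of variables and the anisotropic scaling of $D\psi$. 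Your sketch correctly flags this as the delicate point but does not actually carry out the exponent accounting; if you do it, you will find it works, but the ``$O(1)$'' you assert is a nontrivial cancellation that should be shown.

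One genuine error in your viscous term: you bound $\mu\|\mathbb{D}_{\eta_\varepsilon}[\tilde u_\varepsilon]\|_{L^p}^{p-1}\|\mathbb{D}_{\eta_\varepsilon}[\varphi]\|_{L^p}$ by $C\eta_\varepsilon\|\varphi\|_{W^{1,p}_0(\widetilde\Omega_\varepsilon)}$, which tacitly assumes $\|\mathbb{D}_{\eta_\varepsilon}[\varphi]\|_{L^p}\le\|\varphi\|_{W^{1,p}_0(\widetilde\Omega_\varepsilon)}$. This is false: $\mathbb{D}_{\eta_\varepsilon}$ carries a $\tfrac{1}{\eta_\varepsilon}\partial_{y_3}$, so $\|\mathbb{D}_{\eta_\varepsilon}[\varphi]\|_{L^p}\le \tfrac{C}{\eta_\varepsilon}\|\varphi\|_{W^{1,p}_0(\widetilde\Omega_\varepsilon)}$. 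The correct conclusion is that the viscous term is $\le C\|\varphi\|_{W^{1,p}_0(\widetilde\Omega_\varepsilon)}$, with the factor $\eta_\varepsilon$ from the a priori estimate exactly absorbing the $\eta_\varepsilon^{-1}$ from the scaled derivative — that cancellation is the whole point, and your version loses it by showing a spurious extra power of $\eta_\varepsilon$. The final bound for the lemma is unaffected, but the intermediate inequality as written is wrong.
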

\begin{proof} From system (\ref{2}), we have that (brackets are for the duality products between $W^{-1,p^\prime}$ and $W_0^{1,p}$)
\begin{equation}\label{equality_duality_1}
\begin{array}{rl}
\displaystyle
\langle \nabla_{\eta_\varepsilon}\tilde p_\varepsilon, \tilde\varphi\rangle_{\widetilde \Omega_\varepsilon}   =&\displaystyle
-\mu\int_{\widetilde\Omega_\varepsilon}  S( \mathbb{D}_{\eta_\varepsilon}\left[\tilde {u}_\varepsilon\right]): D_{\eta_\varepsilon}\tilde \varphi\,dx'dy_3\\
&\displaystyle +\int_{\widetilde\Omega_\varepsilon} f\cdot \tilde \varphi\,dx'dy_3 -\int_{\widetilde\Omega_\varepsilon}  (\tilde u_\varepsilon\cdot \nabla_{\eta_\varepsilon}) \tilde u_\varepsilon\, \tilde\varphi\,dx'dy_3\, .
\end{array}
\end{equation}
for every $\tilde\varphi\in W^{1,p}_0(\widetilde\Omega_\varepsilon)^3$. By the second estimate in (\ref{a3}), we have  
\begin{equation}\label{inertial_123_1}\begin{array}{rl}\displaystyle
\left| \mu\int_{\widetilde \Omega_\varepsilon} S( \mathbb{D}_{\eta_\varepsilon}\left[\tilde {u}_\varepsilon\right]) : D_{\eta_\varepsilon} \tilde\varphi\,dx'dy_3\right|\leq &\!\!\!\!\displaystyle
 \|\mathbb{D}_{\eta_\varepsilon}[\tilde u_\varepsilon]\|^{p-1}_{L^{p}(\widetilde \Omega_\varepsilon)^{3\times 3}}\|D_{\eta_\varepsilon} \tilde\varphi\|_{L^p(\widetilde \Omega_\varepsilon)^{3\times 3}}\\
\\
\leq &\!\!\!\!\displaystyle
{1\over \eta_\varepsilon}\|\mathbb{D}_{\eta_\varepsilon}[\tilde u_\varepsilon]\|^{p-1}_{L^{p}(\widetilde \Omega_\varepsilon)^{3\times 3}}\| \tilde\varphi\|_{W^{1,p}_0(\widetilde \Omega_\varepsilon)^{3\times 3}}\\
\\
\leq & \!\!\!\!C\|\tilde\varphi\|_{W^{1,p}_0(\widetilde \Omega_\varepsilon)^3}\,,\\
\\
\displaystyle
\left|\int_{\widetilde \Omega_\varepsilon}f\cdot \tilde\varphi\,dx'dy_3\right|\leq &\displaystyle \!\!\!\! C\|\tilde\varphi\|_{W^{1,p}_0(\widetilde \Omega_\varepsilon)^3}\,.
\end{array}
\end{equation}
Hence, to derive estimates for $\nabla_{\eta_\varepsilon} \tilde p_\varepsilon$ from (\ref{equality_duality_1}), we just need to consider the initial terms, which can be written
\begin{equation}\label{inertial_1_1}\begin{array}{l}
\displaystyle \int_{\widetilde \Omega_\varepsilon} (\tilde u_\varepsilon\cdot \nabla_{\eta_\varepsilon}) \tilde u_\varepsilon\,\tilde \varphi\, dx'dy_3= \displaystyle -\int_{\widetilde \Omega_\varepsilon} \tilde u_\varepsilon\tilde \otimes \tilde u_\varepsilon:D_{x'} \tilde \varphi\,dx'dy_3\\
\displaystyle \qquad+{1\over \eta_\varepsilon}\left(\int_{\widetilde \Omega_\varepsilon}\partial_{y_3} \tilde u_{\varepsilon,3}\tilde  u_\varepsilon\cdot  \tilde \varphi \,dx'dy_3+ \int_{\widetilde \Omega_\varepsilon}\tilde u_{\varepsilon,3}\partial_{y_3} \tilde u_\varepsilon\cdot \tilde\varphi\,dx'dy_3\right)\,,
\end{array}
\end{equation}
where $(u \tilde \otimes w)_{ij}=u_i w_j$, $i=1,2$, $j=1,2,3$.

We consider separately the two terms in the right-hand side of (\ref{inertial_1_1}):

(i) Estimate of the first part of the right-hand side of (\ref{inertial_1_1}) has the form
$$\|\tilde u_\varepsilon\|^2_{L^{q'}(\widetilde \Omega_\varepsilon)^3}\|D_{x'}\tilde \varphi\|_{L^{p}(\widetilde \Omega_\varepsilon)^{3\times 2}}\,,$$
with $2/q'+1/p=1$.

We introduce the interpolation parameter $\theta={p^*(p-1)-2p\over 2(p^*-p)}$ where 
$p^*={3p\over (3-p)}$ if $9/5\leq p<3$, $p^*\in [p,+\infty)$ if $p=3$ and $p^*\in[p,+\infty]$ if $p>3$.

For $9/5\leq p<3$, we have that $0\leq \theta\leq 1$ such that
$${1\over q'}={\theta\over p}+{1-\theta\over p^*}\,.$$
We have the interpolation 
$$\|\tilde u_\varepsilon\|_{L^{q'}(\widetilde \Omega_\varepsilon)^3}\leq \|\tilde u_\varepsilon\|^{\theta}_{L^p(\widetilde \Omega_\varepsilon)^3} \|\tilde u_\varepsilon\|^{1-\theta}_{L^{p^*}(\widetilde \Omega_\varepsilon)^3}\,,$$
and by the the Sobolev embedding, $W^{1,p}_0 \hookrightarrow L^{p^*}$, and the first estimate in (\ref{a3}) and estimate (\ref{Derivada_U}), we obtain
$$\begin{array}{rl}
\displaystyle \|\tilde u_\varepsilon\|_{L^{q'}(\widetilde \Omega_\varepsilon)^3}\leq &\displaystyle
\|\tilde u_\varepsilon\|^{\theta}_{L^p(\widetilde \Omega_\varepsilon)^3} \|D\tilde u_\varepsilon\|^{1-\theta}
_{L^{p}(\widetilde \Omega_\varepsilon)^{3\times 3}}\leq \|\tilde u_\varepsilon\|_{W_0^{1,p}(\widetilde \Omega_\varepsilon)^3} \leq C \eta_\varepsilon^{1 \over p-1}\,,
\end{array}$$
and then, 
\begin{equation*}\label{inertial_3_1}
\left|\int_{\widetilde \Omega_\varepsilon}\tilde  u_\varepsilon\tilde \otimes\tilde  u_\varepsilon:D_{x'} \tilde \varphi\,dx'dy_3\right|\leq C\eta_\varepsilon^{2\over p-1} \|\tilde\varphi\|_{W^{1,p}_0(\widetilde \Omega_\varepsilon)^3}\,.
\end{equation*}

For $p\ge 3$, we take $p^*=p$ and we have
\begin{equation*}\label{inertial_3_1}
\left|\int_{\widetilde \Omega_\varepsilon}\tilde  u_\varepsilon\tilde \otimes\tilde  u_\varepsilon:D_{x'} \tilde \varphi\,dx'dy_3\right|\leq C\eta_\varepsilon^{2p\over p-1} \|\tilde\varphi\|_{W^{1,p}_0(\widetilde \Omega_\varepsilon)^3}\,.
\end{equation*}

(ii) Estimate of the second part of the right-hand side of (\ref{inertial_1_1}) has the form
$${C\over \eta_\varepsilon}\|\partial_{y_3} \tilde u_{\varepsilon}\|_{L^{p}(\widetilde \Omega_\varepsilon)^3}\|\tilde u_\varepsilon\|_{L^{q'}(\widetilde \Omega_\varepsilon)^3}\|\tilde \varphi\|_{L^{q'}(\widetilde \Omega_\varepsilon)^3}\,,$$
with $2/q'+1/p=1$.

For $9/5\leq p<3$, working as in item (i), we have 
$$\|\tilde u_\varepsilon\|_{L^{q'}(\widetilde\Omega_\varepsilon)^3}\leq C\eta_{\varepsilon}
^{1\over p-1}\,,\quad \|\tilde \varphi\|_{L^{q'}(\widetilde\Omega_\varepsilon)^3}\leq \|\tilde \varphi\|_{W^{1,p}_0
(\widetilde\Omega_\varepsilon)^3}\,,$$
and by estimate (\ref{Derivada_U}), we get
\begin{equation*}\label{inertial_4_1}{1\over \eta_\varepsilon}\left|\int_{\widetilde \Omega_\varepsilon} \left(\partial_{y_3} \tilde u_{\varepsilon,3} \tilde u_\varepsilon \tilde \varphi\,dx'dy_3 +\int_{\widetilde \Omega_\varepsilon}\tilde  u_{\varepsilon,3}\partial_{y_3} \tilde u_\varepsilon\, \tilde \varphi\,dx'dy_3\right) \right|\leq 
C\eta_\varepsilon^{2\over p-1}\|\tilde\varphi\|_{W^{1,p}_0(\widetilde \Omega_\varepsilon)^3}\,.\end{equation*}
For $p\ge 3$, we take $p^*=p$ and we have
\begin{equation*}\label{inertial_4_1}{1\over \eta_\varepsilon}\left|\int_{\widetilde \Omega_\varepsilon} \left(\partial_{y_3} \tilde u_{\varepsilon,3} \tilde u_\varepsilon \tilde \varphi\,dx'dy_3 +\int_{\widetilde \Omega_\varepsilon}\tilde  u_{\varepsilon,3}\partial_{y_3} \tilde u_\varepsilon\, \tilde \varphi\,dx'dy_3\right) \right|\leq 
C\eta_\varepsilon^{2p\over p-1}\|\tilde\varphi\|_{W^{1,p}_0(\widetilde \Omega_\varepsilon)^3}\,.\end{equation*}
Then, from (\ref{inertial_1_1}) we can deduce that for $9/5\leq p<3$, we obtain
$$\left|\int_{\widetilde \Omega_\varepsilon} (\tilde u_\varepsilon\cdot \nabla_{\eta_\varepsilon})\tilde u_\varepsilon\,\tilde \varphi\, dx'dy_3\right|\leq  C\eta_\varepsilon^{2\over p-1}\|\tilde \varphi\|_{W^{1,p}_0
(\widetilde\Omega_\varepsilon)^3}\,,$$
and for $p\ge 3$, we get
$$\left|\int_{\widetilde \Omega_\varepsilon} (\tilde u_\varepsilon\cdot \nabla_{\eta_\varepsilon})\tilde u_\varepsilon\,\tilde \varphi\, dx'dy_3\right|\leq  C\eta_\varepsilon^{2p\over p-1}\|\tilde \varphi\|_{W^{1,p}_0
(\widetilde\Omega_\varepsilon)^3}\,.$$
Taking into account the previous estimates with $\eta_\varepsilon\ll 1$ and (\ref{inertial_123_1}) in (\ref{equality_duality_1}), for $9/5\leq p<+\infty$, we have
\begin{equation*}\label{estimFQ_1}
|\langle \nabla_{\eta_\varepsilon} \tilde p_\varepsilon,\tilde\varphi \rangle_{\widetilde\Omega_\varepsilon}|\leq C\|\tilde\varphi \|_{W^{1,p}_0(\widetilde \Omega_\varepsilon)^3}\,,
\end{equation*}
and so we have the  estimate (\ref{estim_nabla_p_1}).\par  \end{proof}
In order to estimate the pressure, since $\widetilde \Omega_\varepsilon$ is a bounded Lipschitz domain, we have
$$\|\tilde p_\varepsilon\|_{L^{p'}_0(\widetilde \Omega_\varepsilon)}\leq C(\widetilde \Omega_\varepsilon) \|\nabla \tilde p_\varepsilon\|_{W^{-1,p'}(\widetilde\Omega_\varepsilon)^3}.$$
We take into account that the constant depends on the domain, i.e. it depends on $\varepsilon$. Thus, we can not obtain an estimate of the pressure in a fixed domain in order to prove convergence. So we have to define a continuation of the pressure to $\Omega$ in order to prove convergence.

\subsection{The Extension of ($\tilde{u}_{\varepsilon}$, $\tilde{p}_{\varepsilon}$) to the whole domain $\Omega$}
In this section, we will extend the solution $(\tilde{u}_{\varepsilon},\tilde{p}_{\varepsilon})$ to the whole domain $\Omega$. It is easy to extend the velocity by zero in $\Omega\backslash \widetilde{\Omega}_{\varepsilon}$ (this is compatible with the no-slip boundary condition on $\partial \widetilde{\Omega}_{\varepsilon}$). We will denote by $\tilde{v}_{\varepsilon}$ the continuation  of $\tilde u_\varepsilon$ in $\Omega$. It is well known that extension by zero preserves $L^p$ and $W_0^{1,p}$ norms for $1<p<+\infty$. We note that the extension $\tilde{v}_{\varepsilon}$ belongs to $W_0^{1,p}(\Omega)^3$.

Extending the pressure is a much more difficult task. Tartar \cite{Tartar} introduced a continuation of the pressure for a flow in a porous media. This construction applies to periodic holes in a domain $\widetilde \Omega_\varepsilon$ when each hole is strictly contained into the periodic cell. In this context, we can not use directly this result because the ``holes" are along the boundary $\widetilde \Sigma_\varepsilon$ of $\widetilde \Omega_\varepsilon$, and moreover the scale of the vertical direction is smaller than the scales of the horizontal directions. This fact will induce several limitations in the results obtained by using the method, especially in view of the convergence for the pressure. In this sense, for the case of Newtonian fluids, Bayada and Chambat  \cite{Bayada_Chambat}  and  Mikeli\'c \cite{Mikelic2} introduced an operator $R^\varepsilon$  generalizing the results of Tartar \cite{Tartar} to this context.  In our case, we need an operator $R_p^\varepsilon$ between $W^{1,p}(\Omega)^3$ and $W^{1,p}(\widetilde\Omega_\varepsilon)^3$ with similar properties. 

Let us introduce some notation. We consider that the domain $\omega$ is covered by a rectangular mesh of size $\varepsilon$: for $k'\in \mathbb{Z}^2$, each cell $Y'_{k',\varepsilon}=\varepsilon k'+\varepsilon Y'$.  We define the thin domain 
$$Q_{\varepsilon}=\{x\in\mathbb{R}^3\,:\, x'\in\omega,\ 0<x_3< \eta_\varepsilon h_{\rm max}\}\,,$$
and the corresponding  cubes of size $\varepsilon$ and height $\eta_\varepsilon h_{\rm max}$ given by $Q_{k',\varepsilon}=Y'_{k',\varepsilon}\times (0,\eta_\varepsilon h_{\rm max})$.  We also define $\widetilde Q_{k',\varepsilon}=Y'_{k',\varepsilon}\times (0, h_{\rm max})$.

According to the definition of the basic cell $Y$ defined in (\ref{cell}), we also define $Y_{k',\varepsilon}=Y'_{k',\varepsilon}\times (0,h(y'))$ for $k'\in\mathbb{Z}^2$. We also consider  a smooth surface included in $Y$ and surrounding the hump such that $Y$ is split into two areas $Y_f$ and $Y_m$ (see Figure \ref{fig:periodic_cell_2D_3D}).

\begin{figure}[h!]
\begin{center}
\includegraphics[width=9.5cm]{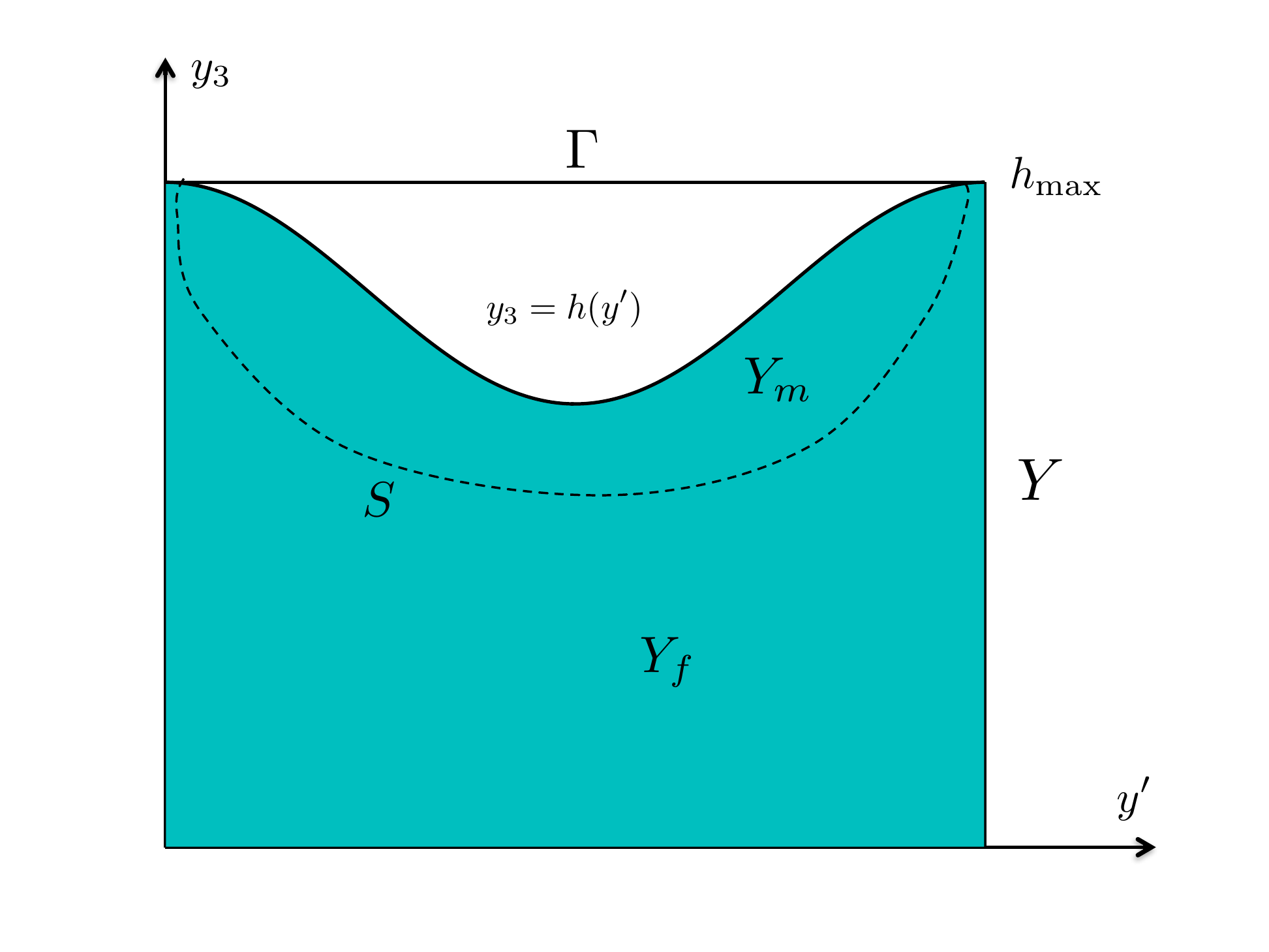}
 \vspace{-1cm}
 \end{center}
\caption{Basic cell Y}
\label{fig:periodic_cell_2D_3D}
\end{figure}

We also define  the following sets:
$$\begin{array}{l}
\Pi=Y'\times (0,h_{\rm max}),\\
\Pi^-=Y'\times (0,h_{\rm min}),\\
Y_s=\Pi\setminus (Y_m\cup Y_f),\\
S=\partial Y_m\cap \partial Y_f.
\end{array}
$$

We suppose from now on the following assumptions:
\begin{itemize}
\item[(H1)] the surface roughness is made of detached smooth humps periodically given on the upper part of the gap,
\item[(H2)] $\omega$ is covered by an exact finite number of periodic  sets $Y_{k',\varepsilon}$. Thus, we define $T_\varepsilon=\{k'\in\mathbb{Z}^2$\,:\, $\omega\cap Y_{k',\varepsilon}\neq \emptyset\}$, 
\item[(H3)] $\partial Y_m$ is a $C^1$ manifold.
\end{itemize}

Generalizing Bayada and Chambat \cite{Bayada_Chambat}, we get the following.
\begin{lemma}\label{lemma_new1}
For given $\tilde \varphi\in W^{1,p}(\Pi)^3$, $1<p<+\infty$, such that $\tilde \varphi=0$ on $\Gamma$, there exists $\tilde w$ in $W^{1,p}(Y_m)^3$ such that:
$$\tilde w_{|_S}=\tilde \varphi_{|_S}\quad\hbox{and}\quad \tilde w_{|_{\partial Y_m\setminus S}}=0\,.$$
Moreover, there exists a constant $C$ which does not depend on $\tilde\varphi$ such that:
\begin{equation}\label{lema_1_bayada_estim}\left\{\begin{array}{l}\displaystyle
\|\tilde w\|_{W^{1,p}(Y_m)^3}\leq C\|\tilde \varphi\|_{W^{1,p}(\Pi)^3},\\
\\
{\rm div}_{\eta_\varepsilon}\, \tilde \varphi=0\Rightarrow {\rm div}_{\eta_\varepsilon}\,\tilde w=0\,.\end{array}\right.
\end{equation}
\end{lemma}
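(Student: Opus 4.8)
The plan is to construct $\tilde w$ explicitly as a suitable extension of $\tilde\varphi$ across the region $Y_m$ surrounding the hump, adapting the Bochner-space extension argument of Bayada and Chambat \cite{Bayada_Chambat} to the present $L^p$ framework. First I would fix a finite atlas of $\partial Y_m$ by (H3), so that near each chart the interface $S = \partial Y_m \cap \partial Y_f$ and the outer part $\partial Y_m\setminus S$ can be flattened, and I would choose a partition of unity $\{\chi_j\}$ subordinate to this atlas together with a global cut-off function $\theta\in C^\infty(\overline{Y_m})$ with $\theta \equiv 1$ in a neighbourhood of $S$ and $\theta\equiv 0$ in a neighbourhood of $\partial Y_m\setminus S$. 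The rough idea is then to set $\tilde w = \theta\,E\tilde\varphi$, where $E$ is a bounded linear Sobolev extension operator from $W^{1,p}(\Pi)^3$ producing a function on a neighbourhood of $\overline{Y_m}$ that still vanishes on $\Gamma$; the cut-off $\theta$ takes care of the two boundary conditions, since on $S$ we have $\tilde w = \tilde\varphi|_S$ and on $\partial Y_m\setminus S$ we have $\tilde w = 0$. The bound $\|\tilde w\|_{W^{1,p}(Y_m)^3}\le C\|\tilde\varphi\|_{W^{1,p}(\Pi)^3}$ then follows from the product rule, boundedness of $E$, and $\|\theta\|_{C^1}<\infty$, with $C$ depending only on the fixed geometry and not on $\tilde\varphi$.

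The genuinely delicate part is the second implication in \rfb{lema_1_bayada_estim}: when $\mathrm{div}_{\eta_\varepsilon}\tilde\varphi = 0$ we must also arrange $\mathrm{div}_{\eta_\varepsilon}\tilde w = 0$. The naive cut-off construction above destroys the divergence-free condition, so after building a first extension $\tilde w_0$ with the correct boundary values one has $g := \mathrm{div}_{\eta_\varepsilon}\tilde w_0 \in L^p(Y_m)$, which need not vanish. One then corrects by solving a Bogovski-type problem: find $\tilde z\in W^{1,p}_0(Y_m)^3$ with $\mathrm{div}_{\eta_\varepsilon}\tilde z = g$ in $Y_m$ and $\|\tilde z\|_{W^{1,p}(Y_m)^3}\le C\|g\|_{L^p(Y_m)}\le C\|\tilde\varphi\|_{W^{1,p}(\Pi)^3}$, and set $\tilde w = \tilde w_0 - \tilde z$. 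Because $\tilde z$ vanishes on all of $\partial Y_m$, the boundary conditions of $\tilde w_0$ are preserved, and by construction $\mathrm{div}_{\eta_\varepsilon}\tilde w = 0$. For this to be legitimate one needs $\int_{Y_m} g\,dy$ (in the $\eta_\varepsilon$-weighted sense) to equal zero; this compatibility condition is exactly the flux of $\tilde w_0$ through $\partial Y_m$, which by the divergence theorem is the flux of $\tilde\varphi|_S$ through $S$ plus zero through the rest, i.e. the flux of $\tilde\varphi$ into the subdomain enclosed — and since $\mathrm{div}_{\eta_\varepsilon}\tilde\varphi = 0$ in $\Pi$ and $\tilde\varphi = 0$ on $\Gamma$, this net flux is zero. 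The Bogovski operator for the anisotropic operator $\mathrm{div}_{\eta_\varepsilon}$ is obtained from the standard one by the rescaling $y_3\mapsto \eta_\varepsilon y_3$; one must check that its norm can be taken uniform in $\varepsilon$ — this follows since $Y_m$ is, up to the fixed bi-Lipschitz dilation, a fixed star-shaped (or finite union of star-shaped) domain, and the Bogovski constant is invariant under such dilations up to a bounded factor.

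The main obstacle, therefore, is the uniform-in-$\varepsilon$ control of the divergence-correction step: one must verify that the right-inverse of $\mathrm{div}_{\eta_\varepsilon}$ on $Y_m$ exists with operator norm bounded independently of $\eta_\varepsilon$, despite the degeneration of the vertical scale. I would handle this by passing through the undilated cell $Y_{k',\varepsilon}$-geometry where the relevant domain is genuinely $\varepsilon$-independent after the change of variables, applying the classical Bogovski estimate there, and transporting the bound back; assumptions (H1) and (H3) guarantee $Y_m$ is a nice enough (piecewise smooth, of the required star-shaped type) domain for this to work. Everything else — linearity of $\tilde w$ in $\tilde\varphi$, the trace conditions on $S$ and $\partial Y_m\setminus S$, and the norm bound — is routine once the extension operator $E$ and the Bogovski operator are in hand.
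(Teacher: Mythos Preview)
Your overall plan is sound and is a genuine alternative to the paper's approach. The paper simply invokes the construction of Bayada and Chambat --- solve an inhomogeneous Stokes problem on $Y_m$ with boundary data $\tilde\varphi$ on $S$ and $0$ on $\partial Y_m\setminus S$ --- and appeals to $L^p$-regularity for the Stokes system to get the $W^{1,p}$ bound; the divergence constraint is then automatic. Your route (Sobolev extension plus cut-off, followed by a Bogovski correction) is more hands-on: it avoids Stokes regularity altogether but requires checking the flux compatibility separately, which you do correctly via the divergence theorem on the region enclosed by $S$ together with the hypothesis $\tilde\varphi=0$ on $\Gamma$. Both routes produce a linear map $\tilde\varphi\mapsto\tilde w$ with the stated boundary behaviour and a bound depending only on the fixed geometry of $Y_m$.

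There is, however, a real gap in your treatment of the anisotropic divergence. You correctly flag the $\varepsilon$-uniformity of the Bogovski step as the main obstacle, but your proposed resolution does not work. The dilation $y_3\mapsto \eta_\varepsilon y_3$ is \emph{not} bi-Lipschitz with $\varepsilon$-independent constants, and the Bogovski constant on a domain of aspect ratio $\sim\eta_\varepsilon$ (or $\sim\eta_\varepsilon^{-1}$) blows up as $\eta_\varepsilon\to 0$; nor is the ``undilated cell $Y_{k',\varepsilon}$'' $\varepsilon$-independent. More concretely, in your construction $g=\mathrm{div}_{\eta_\varepsilon}\tilde w_0$ contains the term $\eta_\varepsilon^{-1}\partial_{y_3}\tilde w_{0,3}$, and since the cut-off $\theta$ genuinely depends on $y_3$ there is no reason for $\|\partial_{y_3}\tilde w_{0,3}\|_{L^p}$ to be $O(\eta_\varepsilon)$; hence $\|g\|_{L^p(Y_m)}$ is in general of order $\eta_\varepsilon^{-1}$, and even a perfect Bogovski inverse would only yield $\|\tilde z\|_{W^{1,p}}\le C\eta_\varepsilon^{-1}\|\tilde\varphi\|_{W^{1,p}}$. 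The Stokes approach would face the same difficulty if one insisted on $\mathrm{div}_{\eta_\varepsilon}$ on the fixed cell. In fact the corresponding lemma in Bayada--Chambat is stated with the \emph{standard} divergence on the reference cell, for which both the Stokes construction and your Bogovski construction go through with an $\varepsilon$-independent constant; that is almost certainly what is intended here, and with that reading your argument is complete.
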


\begin{proof}
The proof  is very similar to that given in \cite{Bayada_Chambat} for the case $p=2$. In addition to the technique used in \cite{Bayada_Chambat}, one needs $L^p$-regularity for the Stokes equation.\par
\end{proof}

\begin{lemma}\label{Operator} There exists an operator $R_p^\varepsilon: W^{1,p}_0(Q_\varepsilon)^3\to W^{1,p}_0(\Omega_\varepsilon)^3$, $1<p<+\infty$, such that:
\begin{itemize}
\item[1. ] $\varphi\in W^{1,p}_0(\Omega_\varepsilon)^3\Rightarrow R_p^{\varepsilon}(\varphi)=\varphi$\,,
\item[2. ] ${\rm div}\,\varphi=0 \Rightarrow  {\rm div}\,R_p^{\varepsilon}(v)=0$\,, 
\item[3. ] For any $\varphi\in W^{1,p}_0(Q_\varepsilon)^3$ (the constant $C$ is independent of $\varphi$ and $\varepsilon$), we have
$$\begin{array}{l}\displaystyle
\|R_p^\varepsilon(\varphi)\|_{L^p(\Omega_\varepsilon)^3}\leq \displaystyle C\Big(\|\varphi\|_{L^p(Q_\varepsilon)^3}\\
\displaystyle
\qquad\qquad\qquad\qquad\quad+\,\varepsilon \|D_{x'}\varphi\|_{L^p(Q_\varepsilon)^{3\times 2}}+\,\eta_\varepsilon \|\partial_{x_3}\varphi\|_{L^p(Q_\varepsilon)^{3}}\Big)\,,\\
\\
\displaystyle \|D_{x'} R_p^\varepsilon(\varphi)\|_{L^p(\Omega_\varepsilon)^{3\times 2}}\leq \displaystyle C\Big({1 \over \varepsilon}\|\varphi\|_{L^p(Q_\varepsilon)^3}\\
\qquad\qquad\qquad\qquad\qquad\quad +\, \|D_{x'}\varphi\|_{L^p(Q_\varepsilon)^{3\times 2}}+\,{\eta_\varepsilon \over \varepsilon} \|\partial_{x_3}\varphi\|_{L^p(Q_\varepsilon)^{3}}\Big)\,,\\
\\
\displaystyle \|\partial_{x_3} R_p^\varepsilon(\varphi)\|_{L^p(\Omega_\varepsilon)^{3}}\leq \displaystyle C\Big({1 \over \eta_\varepsilon}\|\varphi\|_{L^p(Q_\varepsilon)^3}\\
\displaystyle\qquad\qquad\qquad\qquad\qquad\quad+\, {\varepsilon \over \eta_\varepsilon}\|D_{x'}\varphi\|_{L^p(Q_\varepsilon)^{3\times 2}}+ \|\partial_{x_3}\varphi\|_{L^p(Q_\varepsilon)^{3}}\Big)\,.
\end{array}$$
\end{itemize}
\end{lemma}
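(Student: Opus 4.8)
The plan is to construct the operator $R_p^\varepsilon$ cell by cell, following the classical Tartar-type construction adapted by Bayada and Chambat to the thin-domain-with-humps geometry, but upgrading every auxiliary Stokes-type solvability statement to $L^p$. First I would recall the geometric picture: each $Q_{k',\varepsilon}$ is a scaled copy of $\Pi=Y'\times(0,h_{\rm max})$, and each $\Omega_\varepsilon\cap Q_{k',\varepsilon}$ is a scaled copy of $Y=\{y'\in Y', 0<y_3<h(y')\}$. Inside the reference cell $\Pi$ the region $Y_m$ (the ``hump'' region between the surface $S$ and the oscillating boundary) plus $Y_s$ make up $\Pi\setminus Y_f$, where $Y_f$ is the part of the cell that always lies inside the fluid domain. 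Given $\varphi\in W^{1,p}_0(Q_\varepsilon)^3$, on each cell I rescale to $\Pi$ via $y'=x'/\varepsilon$, $y_3=x_3/\eta_\varepsilon$ (which is exactly the dilation already used in the paper), obtaining a function $\tilde\varphi\in W^{1,p}(\Pi)^3$ vanishing on $\Gamma$ (the part of $\partial\Pi$ corresponding to $\partial Q_\varepsilon$). I then set $R_p^\varepsilon(\varphi)=\varphi$ on $Y_f$ (the safe part), and on $Y_m$ I replace $\varphi$ by the function $\tilde w$ furnished by Lemma~\ref{lemma_new1}, which matches $\tilde\varphi$ on the interface $S$, vanishes on $\partial Y_m\setminus S$ (in particular on the true rough boundary $y_3=h(y')$), and preserves the divergence-free condition in the ${\rm div}_{\eta_\varepsilon}$ sense. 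Undoing the rescaling cell by cell glues these pieces into a global $R_p^\varepsilon(\varphi)\in W^{1,p}_0(\Omega_\varepsilon)^3$; the matching on $S$ ensures $W^{1,p}$-conformity across the interface, and the boundary conditions on $\partial Y_m\setminus S$ give the homogeneous trace on $\Sigma_\varepsilon$.

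Next I would verify properties 1 and 2. Property 1 is immediate: if $\varphi$ already belongs to $W^{1,p}_0(\Omega_\varepsilon)^3$, then on every hump region $\varphi$ vanishes on $\partial Y_m\setminus S$, so $\tilde w=\tilde\varphi$ is the natural choice and $R_p^\varepsilon(\varphi)=\varphi$ (one should check the construction in Lemma~\ref{lemma_new1} indeed reduces to the identity in this situation, which it does since the corrector there solves a Stokes problem with data that is already admissible). Property 2 follows from the second line of \eqref{lema_1_bayada_estim}: since the rescaling $x'=\varepsilon y'$, $x_3=\eta_\varepsilon y_3$ turns ${\rm div}$ into a multiple of ${\rm div}_{\eta_\varepsilon}$, a divergence-free $\varphi$ rescales to a ${\rm div}_{\eta_\varepsilon}$-free $\tilde\varphi$, hence $\tilde w$ is ${\rm div}_{\eta_\varepsilon}$-free, hence the glued field is divergence-free in $\Omega_\varepsilon$ (on $Y_f$ it coincides with $\varphi$, and continuity of the normal trace across $S$ prevents any distributional divergence from concentrating there).

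The estimates in property 3 are obtained by tracking how the norms scale under the two independent dilations. On $Y_f$ nothing changes. On the hump regions, the key is the reference inequality $\|\tilde w\|_{W^{1,p}(Y_m)^3}\le C\|\tilde\varphi\|_{W^{1,p}(\Pi)^3}$ from Lemma~\ref{lemma_new1}; rescaling back from $\Pi$ to $Q_{k',\varepsilon}$, the $L^p$ norm of a function picks up a factor $(\varepsilon^2\eta_\varepsilon)^{1/p}$, the horizontal derivative $\partial_{x'}$ an extra $\varepsilon^{-1}$, and the vertical derivative $\partial_{x_3}$ an extra $\eta_\varepsilon^{-1}$, relative to the corresponding quantities on $\Pi$. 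Writing the reference $W^{1,p}(\Pi)$ norm as $\|\tilde\varphi\|_{L^p}+\|\partial_{y'}\tilde\varphi\|_{L^p}+\|\partial_{y_3}\tilde\varphi\|_{L^p}$ and translating each summand back to $Q_{k',\varepsilon}$ produces precisely the three-term right-hand sides displayed in the statement: e.g. the bound for $\|\partial_{x_3}R_p^\varepsilon(\varphi)\|_{L^p(\Omega_\varepsilon)}$ acquires the prefactors $\eta_\varepsilon^{-1}$ on $\|\varphi\|_{L^p(Q_\varepsilon)}$, $\varepsilon\eta_\varepsilon^{-1}$ on $\|\partial_{x'}\varphi\|_{L^p(Q_\varepsilon)}$ and $1$ on $\|\partial_{x_3}\varphi\|_{L^p(Q_\varepsilon)}$. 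Summing the $p$-th powers over all $k'\in T_\varepsilon$ (here assumption (H2) that $\omega$ is tiled by an exact number of cells avoids boundary-layer corrections) and using that the constant $C$ in Lemma~\ref{lemma_new1} is cell-independent gives the global estimates with a constant independent of $\varphi$ and $\varepsilon$.

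The main obstacle is Lemma~\ref{lemma_new1} itself, i.e. producing the divergence-preserving extension operator on the fixed cell with an $L^p$ (rather than $L^2$) bound: this requires $L^p$-regularity and $L^p$-solvability of a Stokes-type problem with inhomogeneous boundary data on the Lipschitz-but-curved hump region $Y_m$, together with a Bogovskii-type right inverse of the divergence in $L^p$ to correct the flux so that the divergence-free property is inherited. Since the excerpt already grants Lemma~\ref{lemma_new1}, what remains here is the bookkeeping of the anisotropic scaling, and the only genuinely delicate point is checking that the interface matching on $S$ is compatible with the $W^{1,p}_0$ trace on $\partial\Omega_\varepsilon$ uniformly in $\varepsilon$ — which is exactly what the cell-independent constant in Lemma~\ref{lemma_new1} delivers.
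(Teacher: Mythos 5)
Your proposal reproduces the paper's argument essentially verbatim: define $R_p$ on the reference cell $\Pi$ by keeping $\tilde\varphi$ on $Y_f$, replacing it on $Y_m$ by the lifting $\tilde w$ from Lemma~\ref{lemma_new1} (and implicitly by $0$ on $Y_s$), rescale anisotropically to each $Q_{k',\varepsilon}$ to pick up the weights $\varepsilon^p$ on horizontal and $\eta_\varepsilon^p$ on vertical derivatives, then sum the $p$-th-power estimates over $k'\in T_\varepsilon$ using (H2); properties 1 and 2 are read off from the construction and from \eqref{lema_1_bayada_estim}$_2$. This matches the paper's proof, so no further commentary is needed.
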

\begin{proof} For any $\tilde \varphi\in W^{1,p}_0(\Pi)^3$ such that $\tilde \varphi=0$ on $\Gamma$, Lemma \ref{lemma_new1} allows us to define $R_p(\tilde \varphi)\in W^{1,p}(\Pi)^3$ by
$$R_p(\tilde \varphi)=\left\{
\begin{array}{rcl}
\tilde \varphi & \hbox{if} & y\in Y_f\,,\\
\tilde w & \hbox{if} &y\in Y_m\,,\\
0 & \hbox{if} & y\in Y_s\,,\\
\end{array}\right.
$$
which satisfies 
\begin{equation}\label{estim_lem_2}
\int_{\Pi}|R_p(\tilde\varphi)|^p\,dy+ \int_{\Pi}|D_y R_p(\tilde\varphi)|^p\,dy\leq C\left(\int_{\Pi}|\tilde\varphi|^p\,dy+ \int_{\Pi}|D_y \tilde\varphi|^p\,dy\right)\,.
\end{equation}

For every $k'\in T_\varepsilon$, by the change of variables
\begin{equation*}\label{change_res}
k'+y'={x'\over \varepsilon},\ y_3={x_3\over \eta_\varepsilon},\ dy={dx\over \varepsilon^2\eta_\varepsilon},\ \partial_{y'}=\varepsilon\,\partial_{x'},\ \partial_{y_3}=\eta_\varepsilon\,\partial_{x_3}\,,
\end{equation*}
we rescale (\ref{estim_lem_2}) from $\Pi$ to $Q_{k',\varepsilon}$. This yields that, for every function $\varphi\in W^{1,p}(Q_{k',\varepsilon})^3$, one has
$$\begin{array}{l}\displaystyle
\int_{Q_{k',\varepsilon}}|R_p(\varphi)|^p\,dx+ \varepsilon^p \int_{Q_{k',\varepsilon}}|D_{x'} R_p(\varphi)|^p\,dx+ \eta_\varepsilon^p\int_{Q_{k',\varepsilon}}|\partial_{x_3} R_p(\varphi)|^p\,dx\\
\\
\displaystyle
 \leq C\left(\int_{Q_{k',\varepsilon}}|\varphi|^p\,dx+ \varepsilon^p\int_{Q_{k',\varepsilon}}|D_{x'} \varphi|^p\,dx+\eta_\varepsilon^p\int_{Q_{k',\varepsilon}}|\partial_{x_3} \varphi|^p\,dx.\right)\\
 \end{array}
 $$
 We define $R_p^\varepsilon$ by applying $R_p$ to each period $Q_{k',\varepsilon}$. Summing the previous inequalities for all the periods $Q_{k',\varepsilon}$, and taking into account that from (H2) we have $Q_{\varepsilon}=\cup_{k'\in T_\varepsilon} Q_{k',\varepsilon}$, gives
 $$\begin{array}{l}\displaystyle
\int_{Q_\varepsilon}|R_p^\varepsilon(\varphi)|^p\,dx+ \varepsilon^p \int_{Q_\varepsilon}|D_{x'} R_p^\varepsilon(\varphi)|^p\,dx+ \eta_\varepsilon^p\int_{Q_\varepsilon}|\partial_{x_3} R_p^\varepsilon(\varphi)|^p\,dx\\
\\
\displaystyle
 \leq C\left(\int_{Q_\varepsilon}|\varphi|^p\,dx+ \varepsilon^p\int_{Q_\varepsilon}|D_{x'} \varphi|^p\,dx+\eta_\varepsilon^p\int_{Q_\varepsilon}|\partial_{x_3} \varphi|^p\,dx\right)\,.
 \end{array}
 $$
 Obviously $R_p^\varepsilon(\varphi)$ lies in $W^{1,p}_0(\Omega_\varepsilon)^3$ and is equal to $\varphi$ if $\varphi$ is zero on $Q_\varepsilon\setminus\Omega_\varepsilon$, so we get the estimates in the third item. Moreover, the second item is obvious from (\ref{lema_1_bayada_estim})$_2$ and the definition of $R_p^\varepsilon$.\\
\end{proof}
\begin{lemma}\label{lemm_R_tilde} Setting $\widetilde R_p^\varepsilon(\widetilde \varphi)=R_p^\varepsilon(\varphi)$ for any $\widetilde\varphi$ in $W^{1,p}_0(\Omega)^3$, $1<p<+\infty$, where $\tilde \varphi(x',y_3)=\varphi(x',\eta_\varepsilon y_3)$ and $R_p^\varepsilon$ is defined in Lemma \ref{Operator}, we have the following estimates
\begin{itemize}
\item[i)] if $\eta_\varepsilon\approx \varepsilon$, with $\eta_\varepsilon/\varepsilon\to \lambda$, $0<\lambda<+\infty$ or $\eta_\varepsilon\ll \varepsilon$,  then
$$\begin{array}{l}\displaystyle
\|\widetilde R_p^\varepsilon(\tilde \varphi)\|_{L^p(\widetilde\Omega_\varepsilon)^3}\leq C\|\tilde \varphi\|_{W_0^{1,p}(\Omega)^3}\,,\\
\\
\displaystyle \|D_{\eta_\varepsilon} \widetilde R_p^\varepsilon(\tilde \varphi)\|_{L^p(\widetilde\Omega_\varepsilon)^{3\times 3}}\leq C{1 \over \eta_\varepsilon}\|\tilde \varphi\|_{W_0^{1,p}(\Omega)^3}\,,\\
\end{array}$$

\item[ii)] if $\eta_\varepsilon\gg \varepsilon$, then
$$\begin{array}{l}\displaystyle
\|\widetilde R_p^\varepsilon(\tilde \varphi)\|_{L^p(\widetilde\Omega_\varepsilon)^3}\leq C\|\tilde \varphi\|_{W_0^{1,p}(\Omega)^3}\,,\\
\\
\displaystyle \|D_{\eta_\varepsilon} \widetilde R_p^\varepsilon(\tilde \varphi)\|_{L^p(\widetilde\Omega_\varepsilon)^{3\times 3}}\leq C{1 \over \varepsilon}\|\tilde \varphi\|_{W_0^{1,p}(\Omega)^3}\,.\end{array}$$
\end{itemize}
\end{lemma}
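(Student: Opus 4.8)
The plan is to derive both sets of bounds by composing two changes of variable---the vertical dilation $y_3=x_3/\eta_\varepsilon$ and its inverse---with the estimates of item~3 of Lemma~\ref{Operator}. First I would make the passage between the rescaled and non-rescaled pictures explicit. Given $\widetilde\varphi\in W_0^{1,p}(\Omega)^3$, the associated $\varphi(x',x_3)=\widetilde\varphi(x',x_3/\eta_\varepsilon)$ lies in $W_0^{1,p}(Q_\varepsilon)^3$; computing the Jacobian $dx_3=\eta_\varepsilon\,dy_3$ and noting that $D_{x'}$ commutes with the dilation while $\partial_{x_3}\varphi$ picks up a factor $\eta_\varepsilon^{-1}$, one obtains
$$\|\varphi\|_{L^p(Q_\varepsilon)}=\eta_\varepsilon^{1/p}\|\widetilde\varphi\|_{L^p(\Omega)},\qquad \|D_{x'}\varphi\|_{L^p(Q_\varepsilon)}=\eta_\varepsilon^{1/p}\|D_{x'}\widetilde\varphi\|_{L^p(\Omega)},\qquad \|\partial_{x_3}\varphi\|_{L^p(Q_\varepsilon)}=\eta_\varepsilon^{1/p-1}\|\partial_{y_3}\widetilde\varphi\|_{L^p(\Omega)}.$$
Conversely, $\widetilde R_p^\varepsilon(\widetilde\varphi)$ is by definition the dilation back to $\widetilde\Omega_\varepsilon$ of $g:=R_p^\varepsilon(\varphi)\in W_0^{1,p}(\Omega_\varepsilon)^3$, so $\|\widetilde R_p^\varepsilon(\widetilde\varphi)\|_{L^p(\widetilde\Omega_\varepsilon)}=\eta_\varepsilon^{-1/p}\|g\|_{L^p(\Omega_\varepsilon)}$ and $\|D_{x'}\widetilde R_p^\varepsilon(\widetilde\varphi)\|_{L^p(\widetilde\Omega_\varepsilon)}=\eta_\varepsilon^{-1/p}\|D_{x'}g\|_{L^p(\Omega_\varepsilon)}$, while $\partial_{y_3}\widetilde R_p^\varepsilon(\widetilde\varphi)$ picks up a factor $\eta_\varepsilon$, so $\eta_\varepsilon^{-1}\partial_{y_3}\widetilde R_p^\varepsilon(\widetilde\varphi)$ is exactly the dilation of $\partial_{x_3}g$ and $\|\eta_\varepsilon^{-1}\partial_{y_3}\widetilde R_p^\varepsilon(\widetilde\varphi)\|_{L^p(\widetilde\Omega_\varepsilon)}=\eta_\varepsilon^{-1/p}\|\partial_{x_3}g\|_{L^p(\Omega_\varepsilon)}$.

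Second, I would substitute these relations into the three inequalities of Lemma~\ref{Operator}. In the bound for $\|g\|_{L^p(\Omega_\varepsilon)}$ the three terms $\|\varphi\|_{L^p(Q_\varepsilon)}$, $\varepsilon\|D_{x'}\varphi\|_{L^p(Q_\varepsilon)}$, $\eta_\varepsilon\|\partial_{x_3}\varphi\|_{L^p(Q_\varepsilon)}$ all carry the common factor $\eta_\varepsilon^{1/p}$ (for the last, $\eta_\varepsilon\cdot\eta_\varepsilon^{1/p-1}=\eta_\varepsilon^{1/p}$), so after multiplying by $\eta_\varepsilon^{-1/p}$ one is left with $\|\widetilde\varphi\|_{L^p(\Omega)}+\varepsilon\|D_{x'}\widetilde\varphi\|_{L^p(\Omega)}+\|\partial_{y_3}\widetilde\varphi\|_{L^p(\Omega)}\le C\|\widetilde\varphi\|_{W_0^{1,p}(\Omega)^3}$ (using $\varepsilon<1$); this is the first estimate of both i) and ii). The same arithmetic on the bound for $\|D_{x'}g\|_{L^p(\Omega_\varepsilon)}$ gives $\|D_{x'}\widetilde R_p^\varepsilon(\widetilde\varphi)\|_{L^p(\widetilde\Omega_\varepsilon)}\le C\varepsilon^{-1}\|\widetilde\varphi\|_{W_0^{1,p}(\Omega)^3}$, and on the bound for $\|\partial_{x_3}g\|_{L^p(\Omega_\varepsilon)}$ gives $\|\eta_\varepsilon^{-1}\partial_{y_3}\widetilde R_p^\varepsilon(\widetilde\varphi)\|_{L^p(\widetilde\Omega_\varepsilon)}\le C\eta_\varepsilon^{-1}\|\widetilde\varphi\|_{W_0^{1,p}(\Omega)^3}$. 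Since $D_{\eta_\varepsilon}$ is built from $D_{x'}$ and $\eta_\varepsilon^{-1}\partial_{y_3}$, adding the last two bounds gives $\|D_{\eta_\varepsilon}\widetilde R_p^\varepsilon(\widetilde\varphi)\|_{L^p(\widetilde\Omega_\varepsilon)}\le C(\varepsilon^{-1}+\eta_\varepsilon^{-1})\|\widetilde\varphi\|_{W_0^{1,p}(\Omega)^3}$.

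Finally I would split according to the regime: if $\eta_\varepsilon\approx\varepsilon$ or $\eta_\varepsilon\ll\varepsilon$ then $\varepsilon^{-1}\le C\eta_\varepsilon^{-1}$ and the gradient bound reduces to $C\eta_\varepsilon^{-1}\|\widetilde\varphi\|_{W_0^{1,p}(\Omega)^3}$, which is case i); if $\eta_\varepsilon\gg\varepsilon$ then $\eta_\varepsilon^{-1}\le C\varepsilon^{-1}$ and it reduces to $C\varepsilon^{-1}\|\widetilde\varphi\|_{W_0^{1,p}(\Omega)^3}$, which is case ii). The analogues of items~1 and 2 of Lemma~\ref{Operator}---namely that $\widetilde R_p^\varepsilon$ restricts to the identity on $W_0^{1,p}(\widetilde\Omega_\varepsilon)^3$ and maps ${\rm div}_{\eta_\varepsilon}$-free fields to ${\rm div}_{\eta_\varepsilon}$-free fields---are inherited directly through the dilation from the corresponding properties of $R_p^\varepsilon$. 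There is no real obstacle here; the only point requiring care is keeping separate the factor $\eta_\varepsilon^{1/p}$ produced by each $L^p$-Jacobian from the factor $\eta_\varepsilon^{\pm1}$ produced by each vertical derivative under the chain rule, since an error there would destroy the cancellation that makes the $L^p$-bound uniform in $\varepsilon$ and $\eta_\varepsilon$.
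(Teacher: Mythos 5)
Your proposal is correct and follows essentially the same route as the paper: apply the estimates of item 3 of Lemma \ref{Operator}, track the Jacobian factor $\eta_\varepsilon^{1/p}$ and the chain-rule factors $\eta_\varepsilon^{\pm1}$ through the dilation, observe the common $\eta_\varepsilon^{1/p}$ cancels, and then compare $1/\varepsilon$ with $1/\eta_\varepsilon$ according to the regime. The paper compresses this into the three displayed inequalities and a one-line remark, but the bookkeeping you spell out is exactly what is behind them.
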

\begin{proof} Considering the change of variables given in (\ref{dilatacion})  and the estimates given in Lemma \ref{Operator}, we obtain
$$\begin{array}{l}\displaystyle
\|\widetilde R_p^\varepsilon(\tilde \varphi)\|_{L^p(\widetilde\Omega_\varepsilon)^3}\leq C\left(\|\tilde \varphi\|_{L^p(\Omega)^3}+\,\varepsilon \|D_{x'}\tilde \varphi\|_{L^p(\Omega)^{3\times 2}}+\, \|\partial_{y_3}\tilde \varphi\|_{L^p(\Omega)^{3}}\right)\,,\\
\\
\displaystyle \|D_{x'} \widetilde R_p^\varepsilon(\tilde \varphi)\|_{L^p(\widetilde\Omega_\varepsilon)^{3\times 2}}\leq C\left({1 \over \varepsilon}\|\tilde \varphi\|_{L^p(\Omega)^3}+\, \|D_{x'}\tilde \varphi\|_{L^p(\Omega)^{3\times 2}}+\,{1 \over \varepsilon} \|\partial_{y_3}\tilde \varphi\|_{L^p(\Omega)^{3}}\right)\,,\\
\\
\displaystyle \|\partial_{y_3} \widetilde R_p^\varepsilon(\tilde \varphi)\|_{L^p(\widetilde\Omega_\varepsilon)^{3}}\leq C\left(\|\tilde \varphi\|_{L^p(\Omega)^3}+\, \varepsilon \|D_{x'}\tilde \varphi\|_{L^p(\Omega)^{3\times 2}}+ \|\partial_{y_3}\tilde \varphi\|_{L^p(\Omega)^{3}}\right)\,.
\end{array}$$
Taking into account that $\varepsilon,\eta_\varepsilon \ll 1$ and the relation between $\varepsilon$ and $\eta_\varepsilon$, we have the desired result.
\end{proof}

It is then possible, to use the classical Tartar extension of the pressure.

\begin{lemma}\label{estOmega} Assume that $9/5\leq p< +\infty$. 
There exists a constant $C$ independent of $\varepsilon$, such that the extension $(\tilde{v}_{\varepsilon}, \tilde{P}_{\varepsilon})\in W_0^{1,p}(\Omega)^3\times L^{p^\prime}_0(\Omega)$ of a solution $(\tilde{u}_{\varepsilon}, \tilde{p}_{\varepsilon})$ of (\ref{2})-(\ref{no-slip_tilde}) satisfies
\begin{equation}\label{estim_velocidad}
\left\Vert \tilde{v}_{\varepsilon}\right\Vert_{L^p(\Omega)^3}\leq C\eta_\varepsilon^{p\over p-1},\quad \left\Vert \mathbb{D}_{\eta_\varepsilon}\left[\tilde{v}_{\varepsilon}\right]\right\Vert_{L^p(\Omega)^{{3\times3}}}\leq C\eta_\varepsilon^{1\over p-1},
\end{equation}
\begin{equation}\label{estim_velocidad2}
\left\Vert D_{\eta_\varepsilon}\tilde{v}_{\varepsilon}\right\Vert_{L^p(\Omega)^{{3\times3}}}\leq C\eta_\varepsilon^{1\over p-1}.
\end{equation}
For the cases $\eta_\varepsilon\approx \varepsilon$,  with $\eta_\varepsilon/\varepsilon\to \lambda$, $0<\lambda<+\infty$, or $\eta_\varepsilon\ll \varepsilon$, we have
\begin{equation}\label{esti_P}
\left\Vert \tilde{P}_{\varepsilon} \right\Vert_{L^{p^\prime}_0(\Omega)}\leq C,
\end{equation}
and for the case $\eta_\varepsilon\gg \varepsilon$, we have
\begin{equation}\label{esti_P2Omega}
\left\Vert \tilde{P}_{\varepsilon} \right\Vert_{L^{p^\prime}_0(\Omega^-)}\leq C.
\end{equation}
\end{lemma}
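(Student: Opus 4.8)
\textbf{Proof plan for Lemma \ref{estOmega}.}
The plan is to transfer the $\varepsilon$-dependent estimates of Lemma \ref{Lemma_a3} and Lemma \ref{lem_estim_nabla_pressure} to the fixed domain $\Omega$, using that extension by zero preserves $L^p$ and $W^{1,p}_0$ norms for the velocity, and using the operator $\widetilde R_p^\varepsilon$ of Lemma \ref{lemm_R_tilde} together with the classical Tartar argument for the pressure. First I would treat the velocity: since $\tilde v_\varepsilon$ is the extension by zero of $\tilde u_\varepsilon$ to $\Omega$, we have $\|\tilde v_\varepsilon\|_{L^p(\Omega)^3}=\|\tilde u_\varepsilon\|_{L^p(\widetilde\Omega_\varepsilon)^3}$ and likewise for $\mathbb{D}_{\eta_\varepsilon}[\tilde v_\varepsilon]$ and $D_{\eta_\varepsilon}\tilde v_\varepsilon$ (the dilated differential operators act componentwise, and the zero-extension commutes with them in the distributional sense because $\tilde u_\varepsilon\in W^{1,p}_0(\widetilde\Omega_\varepsilon)^3$). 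Hence (\ref{estim_velocidad}) and (\ref{estim_velocidad2}) follow immediately from (\ref{a3}) and (\ref{Derivada_U}) in Lemma \ref{Lemma_a3}.

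Next I would construct the pressure extension $\tilde P_\varepsilon$ by duality, following Tartar's scheme. For $\tilde\varphi\in W^{1,p}_0(\Omega)^3$ one sets
$$
\langle \nabla_{\eta_\varepsilon}\tilde P_\varepsilon,\tilde\varphi\rangle_\Omega := \langle \nabla_{\eta_\varepsilon}\tilde p_\varepsilon,\widetilde R_p^\varepsilon(\tilde\varphi)\rangle_{\widetilde\Omega_\varepsilon},
$$
which is well defined because $\widetilde R_p^\varepsilon(\tilde\varphi)\in W^{1,p}_0(\widetilde\Omega_\varepsilon)^3$; property 2 of Lemma \ref{Operator} guarantees that this functional annihilates divergence-free test fields in the appropriate sense, so it indeed defines (the dilated gradient of) a function $\tilde P_\varepsilon\in L^{p'}_0(\Omega)$ extending $\tilde p_\varepsilon$. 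To estimate it, I would plug $\widetilde R_p^\varepsilon(\tilde\varphi)$ into the identity (\ref{equality_duality_1}) in place of $\tilde\varphi$ and bound the three terms on the right exactly as in the proof of Lemma \ref{lem_estim_nabla_pressure}, but now using the bounds from Lemma \ref{lemm_R_tilde}: the viscous term is controlled by $\|\mathbb{D}_{\eta_\varepsilon}[\tilde u_\varepsilon]\|_{L^p}^{p-1}\,\|D_{\eta_\varepsilon}\widetilde R_p^\varepsilon(\tilde\varphi)\|_{L^p}$, which by (\ref{a3}) and Lemma \ref{lemm_R_tilde} is $\le C\,\eta_\varepsilon^{-1}\cdot\eta_\varepsilon\,\|\tilde\varphi\|_{W^{1,p}_0(\Omega)^3}$ in the cases $\eta_\varepsilon\approx\varepsilon$ or $\eta_\varepsilon\ll\varepsilon$ (since $\varepsilon^{-1}\le C\eta_\varepsilon^{-1}$ there); the force term and the convective term are handled with the same interpolation/Sobolev splitting as before, again replacing $\|D_{\eta_\varepsilon}\tilde\varphi\|$ by the $\widetilde R_p^\varepsilon$-bound and absorbing all remaining powers of $\eta_\varepsilon$ and $\varepsilon$ since both are $\ll 1$. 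This gives $|\langle\nabla_{\eta_\varepsilon}\tilde P_\varepsilon,\tilde\varphi\rangle_\Omega|\le C\|\tilde\varphi\|_{W^{1,p}_0(\Omega)^3}$, i.e. $\|\nabla_{\eta_\varepsilon}\tilde P_\varepsilon\|_{W^{-1,p'}(\Omega)^3}\le C$; since $\Omega$ is a fixed bounded Lipschitz domain, the Ne\v{c}as/Bogovski\u{\i} inequality $\|\tilde P_\varepsilon\|_{L^{p'}_0(\Omega)}\le C(\Omega)\|\nabla\tilde P_\varepsilon\|_{W^{-1,p'}(\Omega)^3}$ (note $\|\nabla\tilde P_\varepsilon\|_{W^{-1,p'}}\le C\|\nabla_{\eta_\varepsilon}\tilde P_\varepsilon\|_{W^{-1,p'}}$ as $\eta_\varepsilon\le 1$) yields (\ref{esti_P}).

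For the case $\eta_\varepsilon\gg\varepsilon$ the operator bound in Lemma \ref{lemm_R_tilde}(ii) only gives $\|D_{\eta_\varepsilon}\widetilde R_p^\varepsilon(\tilde\varphi)\|_{L^p}\le C\varepsilon^{-1}\|\tilde\varphi\|_{W^{1,p}_0(\Omega)^3}$, and $\varepsilon^{-1}$ is no longer controlled by $\eta_\varepsilon^{-1}$; so the viscous term is only $O(\eta_\varepsilon/\varepsilon)$, which blows up. This is the main obstacle, and it is exactly why the statement only claims a pressure bound on the sub-domain $\Omega^-$. The remedy is to use test functions supported in $\Omega^-=\omega\times(0,h_{\min})$: there the geometry $\widetilde\Omega_\varepsilon$ contains $\Omega^-$ entirely (no humps intrude below height $h_{\min}$), so $\widetilde R_p^\varepsilon$ may be taken to be the identity on such $\tilde\varphi$, and the duality estimate goes through with the ordinary bounds from Lemma \ref{lem_estim_nabla_pressure} applied in $\Omega^-$; combined with the Ne\v{c}as inequality on the fixed Lipschitz domain $\Omega^-$ this gives $\|\tilde P_\varepsilon\|_{L^{p'}_0(\Omega^-)}\le C$, which is (\ref{esti_P2Omega}). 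I would close by remarking that in every case the extended velocity and pressure coincide with $(\tilde u_\varepsilon,\tilde p_\varepsilon)$ on $\widetilde\Omega_\varepsilon$ by construction, so $(\tilde v_\varepsilon,\tilde P_\varepsilon)$ is the required extension.
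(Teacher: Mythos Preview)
Your proposal is correct and follows essentially the same approach as the paper: extension by zero for the velocity, Tartar-type duality definition of the pressure extension through the restriction operator $\widetilde R_p^\varepsilon$, estimation of the three terms (viscous, force, convective) via Lemma~\ref{lemm_R_tilde}, and restriction of test functions to $\Omega^-$ in the supercritical regime where $\widetilde R_p^\varepsilon$ acts as the identity. The only organisational difference is that the paper first constructs the extension $P_\varepsilon$ in the \emph{undilated} thin domain $Q_\varepsilon$ (so that DeRham is applied with the standard gradient), and then transports it to $\Omega$ via the change of variables~(\ref{dilatacion}); you work directly in the dilated domain and invoke DeRham for $\nabla_{\eta_\varepsilon}$, which is equivalent but slightly less standard.
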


\begin{proof} We first estimate the velocity. Taking into account Lemma \ref{Lemma_a3}, it is clear that, after extension, (\ref{estim_velocidad})-(\ref{estim_velocidad2}) hold.

The mapping $R_p^\varepsilon$ defined in Lemma \ref{Operator} allows us to extend the pressure $p_\varepsilon$ to $Q_\varepsilon$ introducing $F_\varepsilon$ in $W^{-1,p'}(Q_\varepsilon)^3$:
\begin{equation}\label{F}\langle F_\varepsilon, \varphi\rangle_{Q_\varepsilon}=\langle \nabla p_\varepsilon, R_p^\varepsilon (\varphi)\rangle_{\Omega_\varepsilon}\,,\quad \hbox{for any }\varphi\in W^{1,p}_0(Q_\varepsilon)^3\,.
\end{equation}
We calcule the right hand side of (\ref{F}) by using (\ref{1}) and we have
\begin{equation}\label{equality_duality}
\begin{array}{rl}
\displaystyle
\left\langle F_{\varepsilon},\varphi\right\rangle_{Q_\varepsilon}=&\displaystyle
-\mu\int_{\Omega_\varepsilon}S(\mathbb{D}\left[{u}_\varepsilon\right]) : D R_p^{\varepsilon}(\varphi)\,dx\\
&\displaystyle+\int_{\Omega_\varepsilon} f\cdot R_p^{\varepsilon}(\varphi)\,dx - \int_{\Omega_\varepsilon} (u_\varepsilon\cdot \nabla) u_\varepsilon\,R_p^{\varepsilon}(\varphi)\,dx\,.
\end{array}\end{equation}
Moreover, ${\rm div} \varphi=0$ implies 
$$\left\langle F_{\varepsilon},\varphi\right\rangle_{Q_\varepsilon}=0\,,$$
and the DeRham theorem gives the existence of $P_\varepsilon$ in $L^{p'}_0(Q_\varepsilon)$ with $F_\varepsilon=\nabla P_\varepsilon$.

We get for any $\tilde \varphi\in W^{1,p}_0(\Omega)^3$, using the change of variables (\ref{dilatacion}),
$$\begin{array}{rl}\displaystyle\langle \nabla_{\eta_\varepsilon}\tilde P_\varepsilon, \tilde\varphi\rangle_{\Omega}&\displaystyle
=-\int_{\Omega}\tilde P_\varepsilon\,{\rm div}_{\eta_\varepsilon}\,\tilde\varphi\,dx'dy_3\\
&\displaystyle
=-\eta_\varepsilon^{-1}\int_{Q_\varepsilon}P_\varepsilon\,{\rm div}\,\varphi\,dx=\eta_\varepsilon^{-1}\langle \nabla P_\varepsilon, \varphi\rangle_{Q_\varepsilon}\,.
\end{array}$$
Then, using the identification (\ref{equality_duality}) of $F_\varepsilon$,
$$\begin{array}{rl}\displaystyle\langle \nabla_{\eta_\varepsilon}\tilde P_\varepsilon, \tilde\varphi\rangle_{\Omega}&\displaystyle
=\eta_\varepsilon^{-1}\Big(-\mu \int_{\Omega_\varepsilon}S(\mathbb{D}\left[{u}_\varepsilon\right]) : D R_p^{\varepsilon}(\varphi)\,dx\\
&\displaystyle
+\int_{\Omega_\varepsilon} f\cdot R_p^{\varepsilon}(\varphi)\,dx - \int_{\Omega_\varepsilon}(u_\varepsilon\cdot \nabla) u_\varepsilon R_p^{\varepsilon}(\varphi)\,dx\Big)\,,
\end{array}$$
and applying the change of variables (\ref{dilatacion}),
\begin{equation}\label{MM}\begin{array}{rl}\displaystyle\langle \nabla_{\eta_\varepsilon}\tilde P_\varepsilon, \tilde\varphi\rangle_{\Omega}=&\!\!\!\!\displaystyle
-\mu \int_{\widetilde \Omega_\varepsilon}S(\mathbb{D}_{\eta_\varepsilon}\left[\tilde {u}_\varepsilon\right]) : D_{\eta_\varepsilon} \tilde R_p^{\varepsilon}(\tilde \varphi)\,dx'dy_3 \\
\\
&\displaystyle\!\!\!\! \!\!\!\!\!\!\!\!\!\!\!\!+\int_{\widetilde \Omega_\varepsilon} f\cdot \tilde R_p^{\varepsilon}(\tilde \varphi)\,dx'dy_3- \int_{\widetilde \Omega_\varepsilon}(\tilde u_\varepsilon\cdot \nabla_{\eta_\varepsilon}) \tilde u_\varepsilon \tilde R_p^{\varepsilon}(\tilde \varphi)\,dx'dy_3\,.
\end{array}
\end{equation}

Now, we estimate the right-hand side of (\ref{MM}). First, we consider $\eta_\varepsilon\approx \varepsilon$ or $\eta_\varepsilon\ll \varepsilon$.

Using (\ref{a3}) and Lemma \ref{lemm_R_tilde}-(i), we get 
$$\begin{array}{rl}\displaystyle
\left|\mu\int_{\widetilde \Omega_\varepsilon}S(\mathbb{D}_{\eta_\varepsilon}\left[\tilde {u}_\varepsilon\right]) : D_{\eta_\varepsilon} \tilde R_p^{\varepsilon}(\tilde \varphi)\,dx'dy_3\right|\leq &\displaystyle
  C\eta_\varepsilon \|D_{\eta_\varepsilon}\tilde R_p^{\varepsilon}(\tilde \varphi)\|_{L^p(\widetilde\Omega_\varepsilon)^{3\times 3}}\\
  \leq &\displaystyle 
   C\|\tilde\varphi\|_{W^{1,p}_0(\Omega)^3}\,,\\
  \\
  \displaystyle \left|\int_{\widetilde \Omega_\varepsilon} f\cdot \tilde R_p^{\varepsilon}(\tilde \varphi)\,dx'dy_3\right|\leq &\displaystyle C\| \tilde R_p^{\varepsilon}(\tilde \varphi)\|_{L^p(\widetilde\Omega_\varepsilon)^3}\\
 \leq&\displaystyle C\|\tilde\varphi\|_{W^{1,p}_0(\Omega)^3}\,.
  \end{array}
  $$
  
  For the intertial terms, we proceed as in the proof of Lemma \ref{lem_estim_nabla_pressure}. We have
$$\begin{array}{l}\displaystyle  \left|\int_{\widetilde \Omega_\varepsilon}(\tilde u_\varepsilon\cdot \nabla_{\eta_\varepsilon}) \tilde u_\varepsilon \tilde R_p^{\varepsilon}(\tilde \varphi)\,dx'dy_3\right|\leq  \displaystyle
\left|\int_{\widetilde \Omega_\varepsilon} \tilde u_\varepsilon\tilde \otimes \tilde u_\varepsilon:D_{x'}  \tilde R_p^{\varepsilon}(\tilde \varphi)\,dx'dy_3\right|\\
\\
\displaystyle\qquad+{1\over \eta_\varepsilon}\left|\int_{\widetilde \Omega_\varepsilon}\partial_{y_3} \tilde u_{\varepsilon,3}\tilde  u_\varepsilon  \tilde R_p^{\varepsilon}(\tilde \varphi) \,dx'dy_3+ \int_{\widetilde \Omega_\varepsilon}\tilde u_{\varepsilon,3}\partial_{y_3} \tilde u_\varepsilon\,  \tilde R_p^{\varepsilon}(\tilde\varphi)\,dx'dy_3\right|\,.
\end{array}$$
Proceeding exactly as the proof of Lemma \ref{lem_estim_nabla_pressure} and taking into account Lemma \ref{lemm_R_tilde}-(i), we obtain for $9/5\leq p< 3$
$$\left|\int_{\widetilde \Omega_\varepsilon} (\tilde u_\varepsilon\cdot \nabla_{\eta_\varepsilon})\tilde u_\varepsilon\,\tilde R_p^{\varepsilon}(\tilde \varphi)\, dx'dy_3\right|\leq  C\eta_\varepsilon^{3-p\over p-1}\|\tilde \varphi\|_{W^{1,p}_0
(\Omega)^3}\,,$$
and for $p\ge 3$
$$\left|\int_{\widetilde \Omega_\varepsilon} (\tilde u_\varepsilon\cdot \nabla_{\eta_\varepsilon})\tilde u_\varepsilon\,\tilde R_p^{\varepsilon}(\tilde \varphi)\, dx'dy_3\right|\leq  C\eta_\varepsilon^{p+1\over p-1}\|\tilde \varphi\|_{W^{1,p}_0
(\Omega)^3}\,.$$
Then, for $9/5\leq p<+\infty$, we can deduce
$$\left|\int_{\widetilde \Omega_\varepsilon} (\tilde u_\varepsilon\cdot \nabla_{\eta_\varepsilon})\tilde u_\varepsilon\,\tilde R_p^{\varepsilon}(\tilde \varphi)\, dx'dy_3\right|\leq  C\|\tilde \varphi\|_{W^{1,p}_0
(\Omega)^3}\,,$$
and from (\ref{MM}), we obtain
\begin{equation}\label{nabla_p_estim}\|\nabla_{\eta_\varepsilon}\tilde P_\varepsilon\|_{L^{p'}_0(\Omega)^3}\leq C,
\end{equation}
which implies (\ref{esti_P}).

In the case $\eta_\varepsilon\gg \varepsilon$, due to the highly oscillating boundary, we will obtain that the velocity will be zero in $\omega\times (h_{\rm min},h_{\rm max})$ (see (\ref{convtilde1_super_2}) for more details), and so we will obtain an effective problem posed in $\Omega^-$. 

Therefore, reproducing Lemma \ref{lem_estim_nabla_pressure} by considering $\tilde \varphi \in W^{1,p}_0(\Omega^-)$, and taking into account that $\tilde R_p^\varepsilon(\tilde \varphi )=\tilde \varphi$ in $\Omega^-$, we deduce that 
$$\|\nabla_{\eta_\varepsilon}\tilde P_\varepsilon\|_{L^{p'}_0(\Omega^-)^3}\leq C,$$
which implies (\ref{esti_P2Omega}).
\end{proof}

\subsection{Adaptation of the Unfolding Method}
The change of variable (\ref{dilatacion}) does not provide the information we need about the behavior of $\tilde{u}_{\varepsilon}$ in the microstructure associated to $\widetilde{\Omega}_{\varepsilon}$. To solve this difficulty, we introduce an adaptation of the unfolding method (see \cite{arbogast, Ciora} for more details). For this purpose, given $\tilde{u}_{\varepsilon}\in W_0^{1,p}(\widetilde \Omega_\varepsilon)^3$ a solution of the rescaled system (\ref{2})-(\ref{no-slip_tilde}), we define $\hat{u}_{\varepsilon}$ by
\begin{eqnarray}\label{uhat}
\hat{u}_{\varepsilon}(x^{\prime},y)=\tilde{u}_{\varepsilon}\left( {\varepsilon}\kappa\left(\frac{x^{\prime}}{{\varepsilon}} \right)+{\varepsilon}y^{\prime},y_3 \right),\text{\ \ a.e. \ }(x^{\prime},y)\in \omega\times Y,
\end{eqnarray}
where the function $\kappa$ is defined as follows; for $k'\in \mathbb{Z}^2$, we define $\kappa: \mathbb{R}^2\to \mathbb{Z}^2$ by
$$
\kappa(x')=k' \Longleftrightarrow x'\in Y'_{k',1}\,.
$$
Remark that $\kappa$ is well defined up to a set of zero measure in $\mathbb{R}^2$ (the set $\cup_{k'\in \mathbb{Z}^2}\partial Y'_{k',1}$). Moreover, for every $\varepsilon>0$, we have
$$\kappa\left({x'\over \varepsilon}\right)=k'\Longleftrightarrow x'\in Y'_{k',\varepsilon}\,.$$

In the same sense,  given the extension of the pressure $\tilde P_\varepsilon\in L^{p'}_0(\Omega)$, we define $\hat P_\varepsilon$  by
\begin{eqnarray}\label{Phat}
\hat{P}_{\varepsilon}(x^{\prime},y)=\tilde{P}_{\varepsilon}\left( {\varepsilon}\kappa\left(\frac{x^{\prime}}{{\varepsilon}} \right)+{\varepsilon}y^{\prime},y_3 \right),\text{\ \ a.e. \ }(x^{\prime},y)\in \omega\times \Pi.
\end{eqnarray}

\begin{remark}\label{remarkCV}
For $k^{\prime}\in T_{\varepsilon}$, the restrictions of $\hat{u}_{\varepsilon}$  to $Y^{\prime}_{k^{\prime},{\varepsilon}}\times Y$ and $\hat{P}_{\varepsilon}$ to  $Y^{\prime}_{k^{\prime},{\varepsilon}}\times \Pi$ do not depend on $x^{\prime}$, whereas as a function of $y$ it is obtained from $(\tilde{u}_{\varepsilon}, \tilde{P}_{\varepsilon})$ by using the change of variables 
\begin{equation}\label{CV}
y^{\prime}=\frac{x^{\prime}-{\varepsilon}k^{\prime}}{{\varepsilon}}, 
\end{equation}
which transforms $Y_{k^{\prime},{\varepsilon}}$ into $Y$ and  $\widetilde Q_{k',\varepsilon}$ into $\Pi$, respectively.
\end{remark}
Let us obtain some estimates for the sequences $(\hat{u}_{\varepsilon}, \hat{P}_{\varepsilon})$. 
\begin{lemma}\label{estCV}
Assume that $9/5\leq p< +\infty$. There exists a constant $C$ independent of $\varepsilon$, such that $(\hat{u}_{\varepsilon}, \hat{P}_{\varepsilon})$ defined by (\ref{uhat})-(\ref{Phat}) satisfies
\begin{equation}\label{estim_velocidad_gorro1}
\left\Vert \mathbb{D}_{y'}\!\left[\hat{u}_{\varepsilon}\right]\right\Vert_{L^p(\omega\times Y)^{{3\times 2}}}\!\leq\! C\varepsilon \eta_{\varepsilon}^{1 \over p-1},\quad \left\Vert \partial_{y_3}\!\left[\hat{u}_{\varepsilon}\right]\right\Vert_{L^p(\omega\times Y)^{{3}}}\!\leq\! C \eta_{\varepsilon}^{p \over p-1},\end{equation}
\begin{equation}\label{estim_velocidad_gorro1SIMPLE}
\left\Vert D_{y'}\hat{u}_{\varepsilon}\right\Vert_{L^p(\omega\times Y)^{{3\times 2}}}\!\leq\!  C\varepsilon \eta_{\varepsilon}^{1 \over p-1}, \quad \left\Vert \partial_{y_3}\hat{u}_{\varepsilon}\right\Vert_{L^p(\omega\times Y)^{{3}}}\!\leq\! C\eta_{\varepsilon}^{p \over p-1}
\end{equation}
\begin{equation}\label{estim_velocidad_gorro2}
\left\Vert \hat{u}_{\varepsilon}\right\Vert_{L^p(\omega\times Y)^3}\leq C\eta_{\varepsilon}^{\frac{p}{p-1}}\,.
\end{equation}
For the cases $\eta_\varepsilon\approx \varepsilon$,  with $\eta_\varepsilon/\varepsilon\to \lambda$, $0<\lambda<+\infty$, or $\eta_\varepsilon\ll \varepsilon$, we have
\begin{equation}\label{esti_gorro}
\left\Vert \hat{P}_{\varepsilon} \right\Vert_{L^{p'}(\omega\times \Pi)}\leq C.
\end{equation}
\end{lemma}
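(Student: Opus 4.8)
The plan is to derive all four groups of estimates from the change‑of‑variables (scaling) properties of the unfolding operators $(\ref{uhat})$ and $(\ref{Phat})$, combined with the a priori bounds already established in Lemma \ref{Lemma_a3} and Lemma \ref{estOmega}. No new analytic ingredient is needed: the unfolding operator only rearranges the functions, leaving $L^p$-norms unchanged up to explicit powers of $\varepsilon$, so the whole argument is bookkeeping of Jacobians.

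First I would record the scaling identities. By Remark \ref{remarkCV}, for each $k'\in T_\varepsilon$ the restriction of $\hat u_\varepsilon$ to $Y'_{k',\varepsilon}\times Y$ does not depend on $x'$ and is obtained from $\tilde u_\varepsilon$ via the change of variables $(\ref{CV})$, under which the Jacobian is $\varepsilon^2$ and $\partial_{y'}=\varepsilon\,\partial_{x'}$ while $\partial_{y_3}$ is unchanged. Integrating over $Y'_{k',\varepsilon}\times Y$ — a factor $|Y'_{k',\varepsilon}|=\varepsilon^2$ coming out of the trivial $x'$-integration, a factor $\varepsilon^{-2}$ coming from the Jacobian when one passes back to $Y_{k',\varepsilon}$ — and summing over $k'\in T_\varepsilon$, which by (H2) exactly reconstitutes $\widetilde\Omega_\varepsilon$, gives
\[
\|\hat u_\varepsilon\|_{L^p(\omega\times Y)^3}=\|\tilde u_\varepsilon\|_{L^p(\widetilde\Omega_\varepsilon)^3},\qquad
\|\partial_{y_3}\hat u_\varepsilon\|_{L^p(\omega\times Y)^3}=\|\partial_{y_3}\tilde u_\varepsilon\|_{L^p(\widetilde\Omega_\varepsilon)^3},
\]
\[
\|D_{y'}\hat u_\varepsilon\|_{L^p(\omega\times Y)^{3\times2}}=\varepsilon\,\|D_{x'}\tilde u_\varepsilon\|_{L^p(\widetilde\Omega_\varepsilon)^{3\times2}},
\]
together with the same identities for $\mathbb{D}_{y'}[\hat u_\varepsilon]$ and $\partial_{y_3}[\hat u_\varepsilon]$ in place of $D_{y'}\hat u_\varepsilon$ and $\partial_{y_3}\hat u_\varepsilon$, since the (linear) symmetrisation commutes with the change of variables. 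Note that for $y\in Y$ one has $0<y_3<h(y')$, so the point $(\varepsilon\kappa(x'/\varepsilon)+\varepsilon y',y_3)$ in $(\ref{uhat})$ always lies inside $\widetilde\Omega_\varepsilon$; hence no extension of the velocity is involved at this stage.

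Next I would feed in the a priori estimates. Writing $\mathbb{D}_{\eta_\varepsilon}[\tilde u_\varepsilon]$ and $D_{\eta_\varepsilon}\tilde u_\varepsilon$ in block form, the components not involving the $x_3$-derivative are exactly $\mathbb{D}_{x'}[\tilde u_\varepsilon]$ and $D_{x'}\tilde u_\varepsilon$, while the remaining ones are $\eta_\varepsilon^{-1}\partial_{y_3}[\tilde u_\varepsilon]$ and $\eta_\varepsilon^{-1}\partial_{y_3}\tilde u_\varepsilon$. Therefore the second estimate in $(\ref{a3})$ and estimate $(\ref{Derivada_U})$ give $\|\mathbb{D}_{x'}[\tilde u_\varepsilon]\|_{L^p}$, $\|D_{x'}\tilde u_\varepsilon\|_{L^p}\le C\eta_\varepsilon^{1/(p-1)}$ and, after multiplication by $\eta_\varepsilon$, $\|\partial_{y_3}[\tilde u_\varepsilon]\|_{L^p}$, $\|\partial_{y_3}\tilde u_\varepsilon\|_{L^p}\le C\eta_\varepsilon^{p/(p-1)}$. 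Substituting these into the scaling identities of the previous paragraph yields $(\ref{estim_velocidad_gorro1})$ and $(\ref{estim_velocidad_gorro1SIMPLE})$, and the first estimate in $(\ref{a3})$ gives $(\ref{estim_velocidad_gorro2})$.

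Finally, the pressure bound $(\ref{esti_gorro})$ follows the same way: applying the identical cell‑by‑cell change of variables to $(\ref{Phat})$, now with $\Pi=Y'\times(0,h_{\rm max})$ and the cells $\widetilde Q_{k',\varepsilon}$ in the role of $Y$ and $Y_{k',\varepsilon}$ (so that $\bigcup_{k'\in T_\varepsilon}\widetilde Q_{k',\varepsilon}=\Omega$), one gets $\|\hat P_\varepsilon\|_{L^{p'}(\omega\times\Pi)}=\|\tilde P_\varepsilon\|_{L^{p'}(\Omega)}$, which in the regimes $\eta_\varepsilon\approx\varepsilon$ and $\eta_\varepsilon\ll\varepsilon$ is bounded by $C$ by $(\ref{esti_P})$. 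The only delicate points are the consistent bookkeeping of the powers $\varepsilon^2$ (Jacobian) and $\varepsilon$ (chain rule on the horizontal derivatives), and the use of (H2) to guarantee that summing the cell‑wise identities over $T_\varepsilon$ reconstitutes $\widetilde\Omega_\varepsilon$ (respectively $\Omega$) with neither overlap nor loss; everything else is routine.
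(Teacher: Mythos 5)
Your proposal is correct and follows essentially the same route as the paper: unfold, use (H2) to reconstitute $\widetilde\Omega_\varepsilon$ from the cells, track the Jacobian $\varepsilon^2$ and the chain rule factor $\varepsilon$ on $y'$-derivatives, and feed in Lemma \ref{Lemma_a3} and (\ref{esti_P}). The one place where you deviate slightly is (\ref{estim_velocidad_gorro1SIMPLE}): the paper obtains it from (\ref{estim_velocidad_gorro1}) by invoking ``classical Korn's inequality'' on the unfolded function, whereas you derive it directly from (\ref{Derivada_U}) via the same scaling identities. Your route is the cleaner one — Korn on $\omega\times Y$ would only control $\|D_y\hat u_\varepsilon\|_{L^p}$ by $\|\mathbb{D}_y[\hat u_\varepsilon]\|_{L^p}$, which does not obviously split back into the two \emph{separately} scaled bounds $C\varepsilon\eta_\varepsilon^{1/(p-1)}$ and $C\eta_\varepsilon^{p/(p-1)}$ that the lemma asserts and that Lemma \ref{lemma_gorro} later uses individually; deducing each component directly from the bound on $D_{\eta_\varepsilon}\tilde u_\varepsilon$ sidesteps that gap.
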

\begin{proof}
Let us obtain some estimates for the sequence $\hat{u}_{\varepsilon}$ defined by (\ref{uhat}). We obtain
$$\begin{array}{l}
\displaystyle \int_{\omega\times Y}\left\vert \mathbb{D}_{y^{\prime}}\left[\hat{u}_{\varepsilon}(x^{\prime},y)\right]\right\vert^pdx^{\prime}dy=\displaystyle\sum_{k^{\prime}\in T_{\varepsilon}}\int_{Y^{\prime}_{k^{\prime},{\varepsilon}}}\int_{Y}\left\vert \mathbb{D}_{y^{\prime}}\left[\hat{u}_{\varepsilon}(x^{\prime},y)\right]\right\vert^pdx^{\prime}dy\\
\qquad\qquad=\displaystyle\sum_{k^{\prime}\in T_{\varepsilon}}\int_{Y^{\prime}_{k^{\prime},{\varepsilon}}}\int_{Y'}\int_0^{h(y')}\left\vert \mathbb{D}_{y^{\prime}}\left[\tilde{u}_{\varepsilon}({\varepsilon}k^{\prime}+{\varepsilon}y^{\prime},y_3)\right]\right\vert^pdx^{\prime}dy'dy_3.
\end{array}
$$
We observe that $\tilde{u}_{\varepsilon}$ does not depend on $x^{\prime}$, then we can deduce
$$\begin{array}{l}\displaystyle
\int_{\omega\times Y}\left\vert \mathbb{D}_{y^{\prime}}\left[\hat{u}_{\varepsilon}(x^{\prime},y)\right]\right\vert^pdx^{\prime}dy\\
\displaystyle \qquad= {\varepsilon}^2\displaystyle\sum_{k^{\prime}\in T_{\varepsilon}}\int_{Y'}\int_0^{h(y')}\left\vert \mathbb{D}_{y^{\prime}}\left[\tilde{u}_{\varepsilon}({\varepsilon}k^{\prime}+{\varepsilon}y^{\prime},y_3)\right]\right\vert^pdy'dy_3.
\end{array}
$$
By the change of variables (\ref{CV}) and by the $Y'$-periodicity of $h$, we obtain
$$\begin{array}{l}
\displaystyle \int_{\omega\times Y}\left\vert \mathbb{D}_{y^{\prime}}\left[\hat{u}_{\varepsilon}(x^{\prime},y)\right]\right\vert^pdx^{\prime}dy\\
\displaystyle
\qquad={\varepsilon}^p\sum_{k^{\prime}\in T_{\varepsilon}}\int_{Y^{\prime}_{k^{\prime},{\varepsilon}}}\int_0^{h({x'\over \varepsilon}-k')}\left\vert \mathbb{D}_{x^{\prime}}\left[\tilde{u}_{\varepsilon}(x^{\prime},y_3)\right]\right\vert^pdx^{\prime}dy_3\\
\qquad\displaystyle={\varepsilon}^p\displaystyle\sum_{k^{\prime}\in T_{\varepsilon}}\int_{Y^{\prime}_{k^{\prime},{\varepsilon}}}\int_0^{h({x'\over \varepsilon})}\left\vert \mathbb{D}_{x^{\prime}}\left[\tilde{u}_{\varepsilon}(x^{\prime},y_3)\right]\right\vert^pdx^{\prime}dy_3\\
\qquad\displaystyle= {\varepsilon}^p\int_{\widetilde \Omega_\varepsilon}\left\vert \mathbb{D}_{x^{\prime}}\left[\tilde{u}_{\varepsilon}(x^{\prime},y_3)\right]\right\vert^pdx^{\prime}dy_3.
\end{array}
$$
Taking into account the second estimate in (\ref{a3}), we get the first estimate in (\ref{estim_velocidad_gorro1}).

Similarly, using Remark \ref{remarkCV} and definition (\ref{uhat}), we have
\begin{eqnarray*}
\int_{\omega\times Y}\left\vert \partial_{y_3}\left[\hat{u}_{\varepsilon}(x^{\prime},y)\right]\right\vert^pdx^{\prime}dy\leq {\varepsilon}^2\displaystyle\sum_{k^{\prime}\in T_{\varepsilon}}\int_{Y}\left\vert \partial_{y_3}\left[\tilde{u}_{\varepsilon}({\varepsilon}k^{\prime}+{\varepsilon}y^{\prime},y_3)\right]\right\vert^pdy.
\end{eqnarray*}
By the change of variables (\ref{CV}) and the second estimate in (\ref{a3}),  we obtain
\begin{eqnarray*}
\int_{\omega\times Y}\left\vert \partial_{y_3}\left[\hat{u}_{\varepsilon}(x^{\prime},y)\right]\right\vert^pdx^{\prime}dy\leq \int_{\widetilde \Omega_\varepsilon}\left\vert \partial_{y_3}\left[\tilde{u}_{\varepsilon}(x^{\prime},y_3)\right]\right\vert^pdx^{\prime}dy_3
\leq C \eta_{\varepsilon}^{\frac{p^2}{p-1}},
\end{eqnarray*}
so the second estimate in (\ref{estim_velocidad_gorro1}) is proved. Consequently, from classical Korn's inequality, we also have (\ref{estim_velocidad_gorro1SIMPLE}). 

Similarly, using the definition (\ref{uhat}), the change of variables (\ref{CV}) and the first estimate in  (\ref{a3}), we have
\begin{eqnarray*}
\int_{\omega\times Y}\left\vert \hat{u}_{\varepsilon}(x^{\prime},y)\right\vert^pdx^{\prime}dy \leq C\eta_{\varepsilon}^{\frac{p^2}{p-1}},
\end{eqnarray*}
and (\ref{estim_velocidad_gorro2}) holds. 

Finally, let us obtain some estimates for the sequence $\hat{P}_{\varepsilon}$ defined by (\ref{Phat}).
We observe that using the definition (\ref{Phat}) of $\hat{P}_{\varepsilon}$, we obtain
\begin{eqnarray*}
\int_{\omega\times \Pi}\left\vert \hat{P}_{\varepsilon}(x^{\prime},y)\right\vert^{p^\prime}dx^{\prime}dy\leq\displaystyle\sum_{k^{\prime}\in T_{\varepsilon}}\int_{Y^{\prime}_{k^{\prime},{\varepsilon}}}\int_{Y'}\int_0^{h_{\rm max}}\left\vert \tilde{P}_{\varepsilon}({\varepsilon}k^{\prime}+{\varepsilon}y^{\prime},y_3)\right\vert^{p^\prime}dx^{\prime}dy.
\end{eqnarray*}
We observe that $\tilde{P}_{\varepsilon}$ does not depend on $x^{\prime}$, then we can deduce
\begin{eqnarray*}
\int_{\omega\times \Pi}\left\vert \hat{P}_{\varepsilon}(x^{\prime},y)\right\vert^{p^\prime}dx^{\prime}dy\leq {\varepsilon}^2\displaystyle\sum_{k^{\prime}\in T_{\varepsilon}}\int_{Y'}\int_0^{h_{\rm max}}\left\vert \tilde{P}_{\varepsilon}({\varepsilon}k^{\prime}+{\varepsilon}y^{\prime},y_3)\right\vert^{p^\prime}dy'dy_3.
\end{eqnarray*}
By the change of variables (\ref{CV}), we obtain
\begin{eqnarray*}
\int_{\omega\times \Pi}\left\vert \hat{P}_{\varepsilon}(x^{\prime},y)\right\vert^{p^\prime}dx^{\prime}dy\leq \int_{\Omega}\left\vert \tilde{P}_{\varepsilon}(x^{\prime},y_3)\right\vert^{p^\prime}dx^{\prime}dy_3.
\end{eqnarray*}
Taking into account (\ref{esti_P}), we have (\ref{esti_gorro}).
\end{proof}

\section{Some compactness results}\label{S5}
In this section we obtain some compactness results about the behavior of the sequences $(\tilde{v}_{\varepsilon}, \tilde{P}_{\varepsilon})$ and $(\hat{u}_{\varepsilon}, \hat{P}_{\varepsilon})$ satisfying {\it a priori} estimates given in Lemma \ref{estOmega} and Lemma \ref{estCV} respectively. We obtain different behaviors depending on the magnitude $\eta_{\varepsilon}$ with respect to $\varepsilon$.

Let us start giving a convergence result for the pressure $\tilde{P}_{\varepsilon}$.
\begin{lemma} \label{pseudo_lemma1}Assume $9/5\leq p<+\infty $. For the cases $\eta_\varepsilon\approx \varepsilon$,  with $\eta_\varepsilon/\varepsilon\to \lambda$, $0<\lambda<+\infty$, or $\eta_\varepsilon\ll \varepsilon$, for a subsequence of $\varepsilon$ still denote by $\varepsilon$, there exists $\tilde{P}\in L^{p^\prime}_0(\Omega)$, independent of $y_3$, such that
\begin{equation}\label{convPtilde1}
\tilde{P}_{\varepsilon}\rightharpoonup \tilde{P}\text{\ in \ }L_0^{p^\prime}(\Omega),
\end{equation}
and for the case $\eta_\varepsilon\gg \varepsilon$, 
\begin{equation}\label{convPtilde1_menos}
\tilde{P}_{\varepsilon}\rightharpoonup \tilde{P}\text{\ in \ }L_0^{p^\prime}(\Omega^-).
\end{equation}
\end{lemma}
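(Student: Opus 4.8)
The plan is to deduce the convergence of $\tilde P_\varepsilon$ directly from the uniform bound on the pressure established in Lemma \ref{estOmega}, combined with the standard fact that bounded sequences in $L^{p'}$ of a fixed domain are weakly relatively compact (since $L^{p'}(\Omega)$ is reflexive for $1<p'<\infty$), and then to identify the limit as being independent of $y_3$ by using the asymptotic smallness of the vertical derivative of the pressure.

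\emph{First}, consider the cases $\eta_\varepsilon\approx\varepsilon$ or $\eta_\varepsilon\ll\varepsilon$. By estimate \rfb{esti_P} in Lemma \ref{estOmega}, the sequence $\tilde P_\varepsilon$ is bounded in $L^{p'}_0(\Omega)$. Since $\Omega$ is a fixed bounded domain and $1<p'<+\infty$, the space $L^{p'}_0(\Omega)$ is a closed subspace of the reflexive Banach space $L^{p'}(\Omega)$; hence, up to a subsequence (still denoted by $\varepsilon$), there exists $\tilde P\in L^{p'}_0(\Omega)$ with $\tilde P_\varepsilon\rightharpoonup \tilde P$ weakly in $L^{p'}_0(\Omega)$, which is \rfb{convPtilde1}. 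The membership of the limit in $L^{p'}_0$ follows because the functional $q\mapsto\int_\Omega q\,dx'dy_3$ is continuous on $L^{p'}(\Omega)$ and vanishes on each $\tilde P_\varepsilon$, hence on the weak limit.

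\emph{Second}, I must show $\tilde P$ is independent of $y_3$. Here I would use that from \rfb{nabla_p_estim} one has $\|\nabla_{\eta_\varepsilon}\tilde P_\varepsilon\|_{W^{-1,p'}(\Omega)^3}\le C$, and in particular the third component reads $\tfrac1{\eta_\varepsilon}\partial_{y_3}\tilde P_\varepsilon$, so that $\|\partial_{y_3}\tilde P_\varepsilon\|_{W^{-1,p'}(\Omega)}\le C\eta_\varepsilon\to 0$. Testing against an arbitrary $\psi\in C^\infty_c(\Omega)$ and passing to the limit,
\[
\int_\Omega \tilde P\,\partial_{y_3}\psi\,dx'dy_3=-\lim_{\varepsilon\to 0}\langle \partial_{y_3}\tilde P_\varepsilon,\psi\rangle=0,
\]
so $\partial_{y_3}\tilde P=0$ in $\mathcal D'(\Omega)$, i.e. $\tilde P$ does not depend on $y_3$ and can be identified with a function in $L^{p'}_0(\omega)$. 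For the case $\eta_\varepsilon\gg\varepsilon$, the argument is identical but carried out on $\Omega^-$ instead of $\Omega$, invoking \rfb{esti_P2Omega} for the uniform bound and the analogue of \rfb{nabla_p_estim} on $\Omega^-$ noted at the end of the proof of Lemma \ref{estOmega}; this gives \rfb{convPtilde1_menos} and again a $y_3$-independent limit.

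\emph{Main obstacle.} The genuinely delicate part is not the compactness argument itself (which is soft) but ensuring the pressure estimates on which it rests — namely \rfb{esti_P} and \rfb{esti_P2Omega} — are available; these in turn depend on the construction of the restriction operator $R_p^\varepsilon$ of Lemma \ref{Operator} and the careful control of the convective term, already carried out in Section \ref{S4}. Given those, the only point requiring a little care in the present lemma is bookkeeping: making sure the subsequence extracted for the velocity compactness (in the subsequent lemmas) is compatible with the one extracted here, and that in the supercritical regime the limit pressure is only controlled on $\Omega^-$, reflecting the degeneracy of the flow in $\omega\times(h_{\rm min},h_{\rm max})$.
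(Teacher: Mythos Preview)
Your proposal is correct and follows essentially the same approach as the paper: weak compactness from the uniform $L^{p'}$ bound \rfb{esti_P} (resp.\ \rfb{esti_P2Omega}), and independence of $y_3$ from \rfb{nabla_p_estim} via $\|\partial_{y_3}\tilde P_\varepsilon\|_{W^{-1,p'}}\le C\eta_\varepsilon\to 0$. Your write-up is in fact more explicit than the paper's, which compresses the argument into three lines.
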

\begin{proof}
Estimate (\ref{esti_P}) implies, up to a subsequence, the existence of $\tilde{P}\in L^{p^\prime}_0(\Omega)$ such that (\ref{convPtilde1}) holds. Also, from (\ref{nabla_p_estim}), by noting that $\partial_{y_3}\tilde P_\varepsilon/\eta_\varepsilon$ also converges weakly  in $W^{-1,p'}(\Omega)$, we obtain $\partial_{y_3} \tilde P=0$. Analogously, we obtain (\ref{convPtilde1_menos}). \par
\end{proof}

We will give a convergence result for $\tilde{v}_{\varepsilon}$. 
\begin{lemma}\label{LemmaConvergenceUtilde} Assume that $9/5\leq p<+\infty$. For a subsequence of $\varepsilon$ still denote by $\varepsilon$,  there exists $\tilde{v}\in W^{1,p}(0,h_{\rm max};L^p(\omega)^3)$ where $\tilde v_3=0$, and $\tilde{v}(x',0)=\tilde v(x',h_{\rm max})=0$, such that
\begin{equation}\label{convtilde1}
\eta_\varepsilon^{-{p \over p-1}}\tilde{v}_{\varepsilon}\rightharpoonup (\tilde{v}^\prime,0)\text{\ in \ }W^{1,p}(0,h_{\rm max};L^p(\omega)^3)\,,
\end{equation}
and 
\begin{equation}\label{divmacro_tilde1}
\left\{\begin{array}{l}\displaystyle
{\rm div}_{x^{\prime}}\left( \int_0^{h_{\rm max}} \tilde{v}^{\prime}(x^{\prime},y_3)dy_3 \right)=0 \text{\ in \ }\omega,\\
\displaystyle \left( \int_0^{h_{\rm max}} \tilde{v}^{\prime}(x^{\prime},y_3)dy_3 \right)\cdot n=0\text{\ on \ }\partial \omega.
\end{array}\right.
\end{equation}
Moreover, for the case $\eta_\varepsilon\gg \varepsilon$, we have
\begin{equation}\label{convtilde1_super_2}
\eta_\varepsilon^{-{p \over p-1}}\tilde{v}_{\varepsilon}\rightharpoonup 0\text{\ in \ }W^{1,p}(h_{\rm min},h_{\rm max};L^p(\omega)^3)\,,
\end{equation}
and
\begin{equation}\label{divmacro_tilde1_super}
\left\{\begin{array}{l}\displaystyle
{\rm div}_{x^{\prime}}\left( \int_0^{h_{\rm min}} \tilde{v}^{\prime}(x^{\prime},y_3)dy_3 \right)=0 \text{\ in \ }\omega,\\
\displaystyle \left( \int_0^{h_{\rm min}} \tilde{v}^{\prime}(x^{\prime},y_3)dy_3 \right)\cdot n=0\text{\ on \ }\partial \omega.
\end{array}\right.\end{equation}

\end{lemma}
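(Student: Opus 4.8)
The plan is to extract weak limits from the uniform bounds in Lemma~\ref{estOmega} and then identify the structural constraints satisfied by the limit. First I would note that from the first estimate in (\ref{estim_velocidad}), $\eta_\varepsilon^{-p/(p-1)}\tilde v_\varepsilon$ is bounded in $L^p(\Omega)^3$. For the derivatives, the point is to read off anisotropic information from $\|D_{\eta_\varepsilon}\tilde v_\varepsilon\|_{L^p(\Omega)^{3\times3}}\le C\eta_\varepsilon^{1/(p-1)}$: since the third column of $D_{\eta_\varepsilon}$ carries the factor $1/\eta_\varepsilon$, this gives $\|\partial_{y_3}\tilde v_\varepsilon\|_{L^p(\Omega)^3}\le C\eta_\varepsilon^{1/(p-1)}\cdot\eta_\varepsilon = C\eta_\varepsilon^{p/(p-1)}$, so that $\eta_\varepsilon^{-p/(p-1)}\partial_{y_3}\tilde v_\varepsilon$ is bounded in $L^p(\Omega)^3$ as well; the horizontal derivatives $D_{x'}\tilde v_\varepsilon$ are only bounded by $C\eta_\varepsilon^{1/(p-1)}$, which after multiplication by $\eta_\varepsilon^{-p/(p-1)}$ \emph{blows up}, hence no control on $\nabla_{x'}$ of the rescaled velocity in $L^p(\Omega)$. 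Consequently, up to a subsequence, $\eta_\varepsilon^{-p/(p-1)}\tilde v_\varepsilon\rightharpoonup \tilde v$ weakly in the space $W^{1,p}(0,h_{\rm max};L^p(\omega)^3)$ of functions with an $L^p$ spatial dependence and a $W^{1,p}$ dependence in $y_3$; this is exactly the space appearing in the statement. The boundary conditions $\tilde v(x',0)=\tilde v(x',h_{\rm max})=0$ pass to the limit because $\tilde v_\varepsilon\in W^{1,p}_0(\Omega)^3$ (trace on $y_3=0$ and, via the zero extension, on $y_3=h_{\rm max}$).

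Next I would show $\tilde v_3=0$. The idea is to test the divergence-free condition ${\rm div}_{\eta_\varepsilon}\tilde u_\varepsilon=0$, i.e. ${\rm div}_{x'}\tilde v_\varepsilon' + \eta_\varepsilon^{-1}\partial_{y_3}\tilde v_{\varepsilon,3}=0$ in $\Omega$; multiplying by $\eta_\varepsilon^{-p/(p-1)}$ and by $\eta_\varepsilon$ gives $\eta_\varepsilon\,{\rm div}_{x'}(\eta_\varepsilon^{-p/(p-1)}\tilde v_\varepsilon') + \partial_{y_3}(\eta_\varepsilon^{-p/(p-1)}\tilde v_{\varepsilon,3})=0$. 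The first term tends to $0$ in $W^{-1,p}(\Omega)$ (the $L^p$ bound on $\eta_\varepsilon^{-p/(p-1)}\tilde v_\varepsilon'$ times $\eta_\varepsilon\to0$), while the second converges to $\partial_{y_3}\tilde v_3$; hence $\partial_{y_3}\tilde v_3=0$, and combined with $\tilde v_3=0$ on $y_3=0$ this yields $\tilde v_3\equiv0$.

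Then I would derive the averaged incompressibility condition (\ref{divmacro_tilde1}). For any $\psi\in C^\infty_c(\omega)$ independent of $y_3$, testing ${\rm div}_{\eta_\varepsilon}\tilde v_\varepsilon=0$ against $\psi$ and integrating by parts in $y_3$ kills the vertical term (since $\tilde v_{\varepsilon,3}$ vanishes on $y_3=0$ and on $y_3=h_{\rm max}$ after zero extension), leaving $\int_\Omega \tilde v_\varepsilon'\cdot\nabla_{x'}\psi\,dx'dy_3=0$; dividing by $\eta_\varepsilon^{p/(p-1)}$ and passing to the limit gives $\int_\omega\big(\int_0^{h_{\rm max}}\tilde v'(x',y_3)\,dy_3\big)\cdot\nabla_{x'}\psi\,dx'=0$, which is the weak form of ${\rm div}_{x'}\int_0^{h_{\rm max}}\tilde v'\,dy_3=0$ in $\omega$ together with the no-flux boundary condition $\big(\int_0^{h_{\rm max}}\tilde v'\,dy_3\big)\cdot n=0$ on $\partial\omega$ (obtained by also allowing $\psi$ nonzero on $\partial\omega$). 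Finally, for the case $\eta_\varepsilon\gg\varepsilon$ one argues that the geometry forces $\tilde v_\varepsilon=0$ on $\omega\times(h_{\rm min},h_{\rm max})$ up to a set shrinking with $\varepsilon$: indeed in that zone every vertical segment $\{x'\}\times(h_{\rm min},h_{\rm max})$ meets the complement $\Omega\setminus\widetilde\Omega_\varepsilon$ on a set of length bounded below, so a Poincaré inequality along $y_3$ (using $\tilde v_\varepsilon\in W^{1,p}_0$) gives $\|\eta_\varepsilon^{-p/(p-1)}\tilde v_\varepsilon\|_{L^p(\omega\times(h_{\rm min},h_{\rm max}))}\le C\|\eta_\varepsilon^{-p/(p-1)}\partial_{y_3}\tilde v_\varepsilon\|_{L^p}\cdot o(1)$, whence (\ref{convtilde1_super_2}); repeating the averaging argument on $\Omega^-$ (where the domain is the full cylinder) yields (\ref{divmacro_tilde1_super}). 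The main obstacle is the anisotropic bookkeeping in the first step — correctly tracking which components of the rescaled gradient survive and choosing the right functional space $W^{1,p}(0,h_{\rm max};L^p(\omega)^3)$ — together with the justification in the high-frequency case that the velocity genuinely vanishes above $h_{\rm min}$; everything else is a routine passage to the limit in linear (in the velocity) identities.
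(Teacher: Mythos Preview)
Your argument for the weak convergence (\ref{convtilde1}), the identification $\tilde v_3=0$, and the averaged divergence condition (\ref{divmacro_tilde1}) is correct and is essentially the paper's proof: the same anisotropic reading of the bound on $D_{\eta_\varepsilon}\tilde v_\varepsilon$, the same use of ${\rm div}_{\eta_\varepsilon}\tilde v_\varepsilon=0$ to kill $\partial_{y_3}\tilde v_3$, the same trace argument, and the same test against $\psi(x')$ to obtain (\ref{divmacro_tilde1}). Your scaling of the divergence identity is a harmless cosmetic variant of the paper's (they multiply by $\eta_\varepsilon^{-p/(p-1)}$ and observe that $\eta_\varepsilon^{-(2p-1)/(p-1)}\partial_{y_3}\tilde v_{\varepsilon,3}$ stays bounded in $W^{1,p}(0,h_{\rm max};W^{-1,p'}(\omega))$; you multiply by $\eta_\varepsilon^{1-p/(p-1)}$ and let the horizontal part vanish directly).

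The genuine gap is in the high-frequency step (\ref{convtilde1_super_2}). Your claim that ``every vertical segment $\{x'\}\times(h_{\rm min},h_{\rm max})$ meets $\Omega\setminus\widetilde\Omega_\varepsilon$ on a set of length bounded below'' is false: at points $x'$ where $h(x'/\varepsilon)=h_{\rm max}$ the segment lies entirely inside $\widetilde\Omega_\varepsilon$. Even where the premise holds, a Poincar\'e inequality in $y_3$ only gives a constant of order one, not the $o(1)$ you assert; nothing in the vertical direction sees the smallness of $\varepsilon/\eta_\varepsilon$. The mechanism that actually forces $\tilde v=0$ above $y_3=h_{\rm min}$ is \emph{horizontal}: for $y_3>h_{\rm min}$ the extension $\tilde v_\varepsilon(\cdot,y_3)$ vanishes on the $\varepsilon$-periodic set $\{x':h(x'/\varepsilon)<y_3\}$, and a Poincar\'e-type inequality in $x'$ on each $\varepsilon$-cell yields
\[
\|\eta_\varepsilon^{-\frac{p}{p-1}}\tilde v_\varepsilon\|_{L^p(\omega\times(h_{\rm min}+\delta,h_{\rm max}))}
\le C_\delta\,\varepsilon\,\|\eta_\varepsilon^{-\frac{p}{p-1}}D_{x'}\tilde v_\varepsilon\|_{L^p(\Omega)}
\le C_\delta\,\frac{\varepsilon}{\eta_\varepsilon}\to 0,
\]
using $\|D_{x'}\tilde v_\varepsilon\|_{L^p}\le C\eta_\varepsilon^{1/(p-1)}$. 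This is the content the paper imports by citing Theorem~5.2 of \cite{CBF}; once you have (\ref{convtilde1_super_2}) in this way, your derivation of (\ref{divmacro_tilde1_super}) from (\ref{divmacro_tilde1}) is fine.
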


\begin{proof} The estimates (\ref{estim_velocidad})-(\ref{estim_velocidad2}) read 
\begin{equation*}\label{estimate_case1-tilde}
\left\Vert \tilde{v}_{\varepsilon}\right\Vert_{L^p(\Omega)^3}\leq C \eta_\varepsilon^{p\over p-1}, \quad \left\Vert D_{x^\prime}\tilde{v}_{\varepsilon}\right\Vert_{L^p(\Omega)^{{3\times2}}}\leq C \eta_\varepsilon^{1\over p-1}, \quad \left\Vert \partial_{y_3}\tilde{v}_{\varepsilon}\right\Vert_{L^p(\Omega)^{{3}}}\leq C \eta_\varepsilon^{p\over p-1}.
\end{equation*}
The above estimates imply the existence $\tilde{v}\in W^{1,p}(0,h_{\rm max};L^p(\omega)^3)$, such that, up to a subsequence, we have
\begin{equation}\label{conv_lema_1}
\eta_\varepsilon^{-{p\over p-1}}\tilde{v}_{\varepsilon}\rightharpoonup \tilde{v}\text{\ in \ }W^{1,p}(0,h_{\rm max};L^p(\omega)^3),
\end{equation}
which implies
\begin{equation}\label{conv_div_123}
\eta_\varepsilon^{-{p\over p-1}}{\rm div}_{x^\prime}\tilde{v}_{\varepsilon}^{\prime}\rightharpoonup {\rm div}_{x^\prime}\tilde{v}^{\prime}\text{\ in \ }W^{1,p}(0,h_{\rm max};W^{-1,p'}(\omega)).
\end{equation}
Since ${\rm div}_{\eta_\varepsilon}\tilde{v}_{\varepsilon}=0$ in $\Omega$, multiplying by $\eta_\varepsilon^{-{p\over p-1}}$ we obtain
\begin{equation*}\label{div_important_Tilde} 
\eta_\varepsilon^{-{p\over p-1}}{\rm div}_{x^\prime}\tilde{v}_{\varepsilon}^\prime +\eta_\varepsilon^{-{2p-1 \over p-1}}\partial_{y_3}\tilde{v}_{\varepsilon,3}=0,\quad \text{\ in\ } \Omega,
\end{equation*}
which, combined with (\ref{conv_div_123}), implies that  $\eta_\varepsilon^{-{2p-1 \over p-1}}\partial_{y_3}\tilde{v}_{\varepsilon,3}$ is bounded in $W^{1,p}(0,h_{\rm max};W^{-1,p'}(\omega))$. This implies that $\eta_\varepsilon^{-{p \over p-1}}\partial_{y_3}\tilde{v}_{\varepsilon,3}$ tends to zero in $W^{1,p}(0,h_{\rm max};W^{-1,p'}(\omega))$. Also, from (\ref{conv_lema_1}), we have that $\eta_\varepsilon^{-{p \over p-1}}\partial_{y_3}\tilde{v}_{\varepsilon,3}$ tends to $\partial_{y_3}\tilde{v}_{3}$ in $L^p(\Omega)$. From the uniqueness of the limit, we have that $\partial_{y_3}\tilde{v}_3=0$, which implies that $\tilde v_3$ does not depend on $y_3$. Moreover, the continuity of the trace applications from the space of functions $v$ such that $\|\tilde v\|_{L^{p}}$ and $\|\partial_{y_3}\tilde  v\|_{L^p}$ to $L^p(\Sigma)$ and to $L^p(\omega\times\{0\})$ implies $\tilde v=0$ on $\Sigma$ and $\omega\times\{0\}$. This together to $\partial_{y_3} \tilde v_3=0$ implies that $\tilde v_3=0$. 
\\

Finally,  we prove (\ref{divmacro_tilde1}). To do this, we consider $\varphi\in C_c^1(\omega)$ as test function in ${\rm div}_{\eta_\varepsilon}\tilde v_{\varepsilon}=0$ in $\Omega$, which multiplying by $\eta_{\varepsilon}^{-{p\over p-1}}$ gives
$$
\int_{\Omega} {\rm div}_{x'}\tilde v'_{\varepsilon}\, \varphi(x')\, dx'dy_3=0.
$$
From convergence (\ref{convtilde1}), we get (\ref{divmacro_tilde1}).

Finally, for the case $\eta_\varepsilon\gg \varepsilon$, following Theorem 5.2. in Chambat {\it et al} \cite{CBF}, we obtain (\ref{convtilde1_super_2}). As consequence, this together with  (\ref{divmacro_tilde1}) gives (\ref{divmacro_tilde1_super}).

\end{proof}

Now, we give a convergence result for the pressure $\hat{P}_{\varepsilon}$.
\begin{lemma} Assume that $9/5\leq p<+\infty$. For the cases $\eta_\varepsilon\approx \varepsilon$,  with $\eta_\varepsilon/\varepsilon\to \lambda$, $0<\lambda<+\infty$, or $\eta_\varepsilon\ll \varepsilon$, for a subsequence of $\varepsilon$ still denote by $\varepsilon$ there exists $\hat{P}\in L_0^{p^\prime}(\omega \times \Pi)$ such that
\begin{equation}\label{convPgorro1}
\hat{P}_{\varepsilon}\rightharpoonup \hat{P}\text{\ in \ }L_0^{p^\prime}(\omega \times \Pi)\,.
\end{equation}
\end{lemma}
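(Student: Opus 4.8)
The goal is to extract a weak limit $\hat P \in L^{p'}_0(\omega\times\Pi)$ for the unfolded pressure, in the regimes $\eta_\varepsilon\approx\varepsilon$ or $\eta_\varepsilon\ll\varepsilon$. The starting point is the uniform bound \eqref{esti_gorro}, $\|\hat P_\varepsilon\|_{L^{p'}(\omega\times\Pi)}\le C$, established in Lemma \ref{estCV}. Since $L^{p'}(\omega\times\Pi)$ is reflexive for $1<p'<\infty$ (note $9/5\le p<\infty$ gives $1<p'\le 9/4$), the Banach--Alaoglu theorem yields a subsequence, still denoted $\varepsilon$, and a limit $\hat P\in L^{p'}(\omega\times\Pi)$ with $\hat P_\varepsilon\rightharpoonup\hat P$ weakly in $L^{p'}(\omega\times\Pi)$. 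So the only real content is to show that the limit has zero mean, i.e. $\hat P\in L^{p'}_0(\omega\times\Pi)$.

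First I would relate the mean of $\hat P_\varepsilon$ over $\omega\times\Pi$ to the mean of $\tilde P_\varepsilon$ over $\Omega$. Using the definition \eqref{Phat} of $\hat P_\varepsilon$ and Remark \ref{remarkCV}, one unfolds: for each $k'\in T_\varepsilon$ the restriction of $\hat P_\varepsilon$ to $Y'_{k',\varepsilon}\times\Pi$ is, as a function of $y$, the pullback of $\tilde P_\varepsilon$ on $\widetilde Q_{k',\varepsilon}$ via the change of variables \eqref{CV}. Performing exactly the computation already used in the proof of Lemma \ref{estCV} (the $\hat P_\varepsilon$ part), but keeping the sign of the integrand rather than its absolute value, gives
\begin{equation*}
\int_{\omega\times\Pi}\hat P_\varepsilon(x',y)\,dx'dy = \int_\Omega \tilde P_\varepsilon(x',y_3)\,dx'dy_3 = 0,
\end{equation*}
where the last equality is because $\tilde P_\varepsilon\in L^{p'}_0(\Omega)$ by construction (Lemma \ref{estOmega}). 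Here one uses hypothesis (H2), namely $\omega=\cup_{k'\in T_\varepsilon}Y'_{k',\varepsilon}$, so that the exact covering makes the unfolded integral reproduce the original one without boundary remainder terms. Since the constant function $1$ belongs to $L^p(\omega\times\Pi)$, the weak convergence $\hat P_\varepsilon\rightharpoonup\hat P$ passes to the limit in this identity, yielding $\int_{\omega\times\Pi}\hat P\,dx'dy=0$, i.e. $\hat P\in L^{p'}_0(\omega\times\Pi)$, and \eqref{convPgorro1} is proved.

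The main (and only) subtlety is the bookkeeping in the unfolding identity for the mean: one must check that the factors $\varepsilon^2$ coming from $dx'$ over each cell $Y'_{k',\varepsilon}$ and the Jacobian of \eqref{CV} combine correctly so that $\int_{\omega\times\Pi}\hat P_\varepsilon = \int_\Omega\tilde P_\varepsilon$ exactly, with no scaling mismatch and no leftover strip near $\partial\omega$ — this is precisely where (H2) is used. Everything else is the standard reflexive-space weak compactness argument, and no monotonicity or PDE structure is needed at this stage; the characterization of $\hat P$ (e.g. its relation to $\tilde P$ and its independence of the fast variable) will be addressed in the subsequent analysis, not here.
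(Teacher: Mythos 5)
Your proof is correct and follows essentially the same route as the paper: the uniform bound \eqref{esti_gorro} plus reflexivity of $L^{p'}(\omega\times\Pi)$ gives a weak limit along a subsequence. The one place where you add to the paper's argument is the explicit verification that the zero-mean constraint survives the limit: you compute
\begin{equation*}
\int_{\omega\times\Pi}\hat P_\varepsilon(x',y)\,dx'dy = \int_\Omega \tilde P_\varepsilon(x',y_3)\,dx'dy_3 = 0,
\end{equation*}
using the unfolding change of variables and the exact covering hypothesis (H2), so that $\hat P_\varepsilon\in L^{p'}_0(\omega\times\Pi)$ and the (weakly closed) subspace $L^{p'}_0$ captures the limit. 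The paper's own proof only invokes semicontinuity to conclude $\hat P\in L^{p'}(\omega\times\Pi)$ and leaves the $L^{p'}_0$ membership implicit, so your unfolding identity is a correct and worthwhile detail that fills that small gap; it does not constitute a different method.
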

\begin{proof}
Reasoning as in Lemma \ref{pseudo_lemma1}, the estimate (\ref{esti_gorro}) implies the existence $\hat{P}:\omega \times \Pi \to \mathbb{R}$ such that (\ref{convPgorro1}) holds. By semicontinuity and the previous estimate of $\hat{P}_{\varepsilon}$, we have $$\int_{\omega \times \Pi}\left\vert \hat{P} \right\vert^{p^\prime} dx^{\prime}dy\leq C,$$ which shows that $\hat{P}$ belongs to $L^{p^\prime}(\omega \times \Pi)$.
\end{proof}
Next, we give a convergence result for $\hat{u}_{\varepsilon}$.
\begin{lemma}\label{lemma_gorro} Assume that $9/5\leq p<+\infty$. 
For a subsequence of $\varepsilon$ still denote by $\varepsilon$, \begin{itemize}
\item[i)] if $\eta_{\varepsilon}\approx \varepsilon$ with $\eta_\varepsilon/\varepsilon\to \lambda$, $0<\lambda<+\infty$, then there exist $\hat{u}\in L^p(\omega;W^{1,p}_{\sharp}(Y)^3)$, with $\int_{Y}\hat u_3\,dy=0$ and $\hat u=0$ on $y_3=\{0,h(y')\}$, such that
\begin{equation}\label{convUgorro1}
\eta_\varepsilon^{-\frac{p}{p-1}}\hat{u}_{\varepsilon}\rightharpoonup \hat{u}\text{\ in \ }L^p(\omega;W^{1,p}(Y)^3),
\end{equation}
\begin{equation}\label{div0}
{\rm div}_{\lambda}\hat{u}=0 \text{\ in \ }\omega\times Y,
\end{equation}
where ${\rm div}_{\lambda}=\lambda {\rm div}_{y^\prime}+ \partial_{y_3}$,
\item[ii)] if $\eta_{\varepsilon}\ll \varepsilon$, then there exist $\hat{u}\in L^p(\omega;W^{1,p}_{\sharp}(Y)^3)
$,  with $\hat u=0$ on $y_3=\{0,h(y')\}$, $\int_{Y}\hat u_3\,dy=0$ and $\hat u_3$ independent of $y_3$, such that
\begin{equation}\label{converneUgorroUtilde}
\eta_\varepsilon^{-\frac{p}{p-1}}\hat{u}_{\varepsilon}\rightharpoonup \hat u\text{\ in \ }L^p(\omega;W^{1,p}(Y)^3),
\end{equation}
\begin{equation}\label{div0-2}
{\rm div}_{y'}\hat{u}^\prime=0 \text{\ in \ }\omega\times Y,
\end{equation}
\item[iii)]  if $\eta_{\varepsilon}\gg \varepsilon$, then we have that
\begin{equation*}\label{convUgorro2}
\eta_{\varepsilon}^{-\frac{p}{p-1}}\hat{u}_{\varepsilon}\rightharpoonup (\tilde v',0)\text{\ in \ } W^{1,p}(0,h_{\rm min};L^p(\omega)^3)\,.\end{equation*}
where $(\tilde v',0)$ is the weak limit in $W^{1,p}(0,h_{\rm min};L^p(\omega)^3)$ of $\tilde v_\varepsilon$ given in Lemma \ref{LemmaConvergenceUtilde}.
\end{itemize}
Moreover, in the cases $\eta_\varepsilon\approx \varepsilon$ and $\eta_\varepsilon\ll \varepsilon$, we have
\begin{equation}\label{divmacro1}
{\rm div}_{x^{\prime}}\left( \int_Y \hat{u}^{\prime}(x^{\prime},y)dy \right)=0 \text{\ in \ }\omega,\quad \left( \int_Y \hat{u}^{\prime}(x^{\prime},y)dy \right)\cdot n=0\text{\ on \ }\partial \omega\,.
\end{equation}

\end{lemma}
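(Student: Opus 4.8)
The plan is to exploit the uniform estimates from Lemma \ref{estCV} for $\hat u_\varepsilon$, rescaled by $\eta_\varepsilon^{-p/(p-1)}$, and to extract weak limits in the appropriate spaces, then identify the structural constraints (periodicity, vanishing on the lateral walls, divergence-free condition, independence of $y_3$ in the subcritical case). First I would note that estimates \eqref{estim_velocidad_gorro1SIMPLE}--\eqref{estim_velocidad_gorro2} give, after multiplying by $\eta_\varepsilon^{-p/(p-1)}$,
\begin{equation*}
\|\eta_\varepsilon^{-{p\over p-1}}\hat u_\varepsilon\|_{L^p(\omega\times Y)^3}\le C,\quad
\|\eta_\varepsilon^{-{p\over p-1}}\partial_{y_3}\hat u_\varepsilon\|_{L^p(\omega\times Y)^3}\le C,\quad
\|\eta_\varepsilon^{-{p\over p-1}}D_{y'}\hat u_\varepsilon\|_{L^p(\omega\times Y)^{3\times 2}}\le C\,{\varepsilon\over \eta_\varepsilon}.
\end{equation*}
The behaviour of the factor $\varepsilon/\eta_\varepsilon$ is precisely what separates the three regimes: it converges to $1/\lambda$ when $\eta_\varepsilon\approx\varepsilon$, it tends to $+\infty$ when $\eta_\varepsilon\ll\varepsilon$, and it tends to $0$ when $\eta_\varepsilon\gg\varepsilon$. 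Hence in case (i) the full gradient $D_y(\eta_\varepsilon^{-p/(p-1)}\hat u_\varepsilon)$ is bounded in $L^p(\omega\times Y)$, so up to a subsequence we get a weak limit $\hat u\in L^p(\omega;W^{1,p}(Y)^3)$; in case (ii) only $\partial_{y_3}$ of the rescaled function stays bounded, while boundedness of $\varepsilon^{-1}\|\cdot\|$ forces the $y'$-gradient of the limit to vanish in a suitable sense; in case (iii) one instead compares $\hat u_\varepsilon$ with $\tilde v_\varepsilon$ directly.

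For the structural properties I would argue as follows. Periodicity in $y'$ of $\hat u$ follows from the unfolding construction together with the matching of traces on $\partial Y'_{k',\varepsilon}$ between adjacent cells, exactly as in \cite{Ciora}; that $\hat u=0$ on $y_3=0$ and $y_3=h(y')$ is inherited from the no-slip condition \eqref{no-slip_tilde} on $\partial\widetilde\Omega_\varepsilon$, passing to the limit in the trace. The condition $\int_Y\hat u_3\,dy=0$ comes from passing to the limit in $\int_Y \hat u_{\varepsilon,3}\,dy=\varepsilon^{-2}\int_{Y'_{k',\varepsilon}}\int_Y\cdots$, using that $\tilde v_{\varepsilon,3}$ rescaled tends to $\tilde v_3=0$ (Lemma \ref{LemmaConvergenceUtilde}) and a careful bookkeeping of the vertical component; alternatively it follows from \eqref{div0} (resp.\ from the divergence constraint) integrated over $Y$ with periodicity. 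For the divergence conditions \eqref{div0} and \eqref{div0-2}: starting from ${\rm div}_{\eta_\varepsilon}\tilde u_\varepsilon=0$, which in unfolded variables reads ${\rm div}_{x'}\hat u_\varepsilon' + \varepsilon^{-1}{\rm div}_{y'}\hat u_\varepsilon' + \eta_\varepsilon^{-1}\partial_{y_3}\hat u_{\varepsilon,3}=0$ (with the $x'$ and $y'$ derivatives arising from the decomposition $x'/\varepsilon=\kappa(x'/\varepsilon)+y'$), I would multiply by the right power of $\eta_\varepsilon$ and test against smooth functions; in case (i) with $\eta_\varepsilon/\varepsilon\to\lambda$ this produces $\lambda\,{\rm div}_{y'}\hat u' + \partial_{y_3}\hat u_3 = 0$, i.e.\ ${\rm div}_\lambda\hat u=0$, while in case (ii), after multiplying by $\eta_\varepsilon$ (so the $\partial_{y_3}$-term, carrying an extra $\varepsilon/\eta_\varepsilon\to\infty$, must be killed by showing $\partial_{y_3}\hat u_{\varepsilon,3}$ is of lower order), one is left with ${\rm div}_{y'}\hat u'=0$. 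The fact that $\hat u_3$ is independent of $y_3$ in case (ii) is then deduced as in Lemma \ref{LemmaConvergenceUtilde}: from ${\rm div}_{y'}\hat u'=0$ and the full divergence relation one gets $\partial_{y_3}\hat u_3=0$.

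For the macroscopic divergence-free condition \eqref{divmacro1} I would test ${\rm div}_{\eta_\varepsilon}\tilde u_\varepsilon=0$ against $\varphi(x')\in C^1_c(\omega)$ (no $y$-dependence, so the $\partial_{y_3}$ and fast-scale terms drop), obtaining $\int \hat u_\varepsilon'\cdot\nabla_{x'}\varphi=0$ in the limit, hence ${\rm div}_{x'}(\int_Y\hat u'\,dy)=0$ in $\omega$ and, choosing $\varphi$ not compactly supported, the no-flux boundary condition; this is the unfolded analogue of \eqref{divmacro_tilde1}. The main obstacle, as usual in these thin-film homogenization problems, is the delicate scaling analysis in the subcritical case (ii): one must show that the vertical-divergence term $\eta_\varepsilon^{-1}\partial_{y_3}\hat u_{\varepsilon,3}$, which a priori carries a dangerous factor, is actually negligible at the scale at which ${\rm div}_{y'}\hat u'$ lives, and simultaneously that $\hat u_3$ loses its $y_3$-dependence in the limit — this requires combining the a priori bounds with the divergence identity and a uniqueness-of-limit argument rather than a direct passage to the limit. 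Case (iii) is comparatively easy: since in the oscillating strip the domain $\widetilde\Omega_\varepsilon$ has vanishingly thin "channels", one invokes the Chambat--Bayada--Fabricius type argument (\cite{CBF}, already used in Lemma \ref{LemmaConvergenceUtilde}) to conclude that $\hat u_\varepsilon$ coincides with $\tilde v_\varepsilon$ on $\Omega^-$ up to negligible terms and hence converges to $(\tilde v',0)$ on $(0,h_{\rm min})$.
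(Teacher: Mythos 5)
Your plan follows the paper's route in outline — extract weak limits from Lemma \ref{estCV}'s bounds, read off the divergence constraints from the unfolded incompressibility condition, and split the three regimes by the behaviour of $\varepsilon/\eta_\varepsilon$ — but there are a few concrete places where what you wrote would not go through.

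The crux of the final part of the lemma, namely $\int_Y\hat u_3\,dy=0$ and \eqref{divmacro1}, is the averaging identity
\begin{equation*}
\frac{1}{|Y'|}\int_Y\hat u(x',y)\,dy=\int_0^{h_{\rm max}}\tilde v(x',y_3)\,dy_3\quad\text{a.e. in }\omega,
\end{equation*}
which one obtains by computing $\int_{\omega\times Y}\hat u_\varepsilon\,\varphi(x')\,dx'dy$ against a test function $\varphi\in C^1_c(\omega)$, rewriting via the unfolding change of variables, and passing to the limit with \eqref{convtilde1} and \eqref{convUgorro1}/\eqref{converneUgorroUtilde}. You gesture at this for the zero-mean property ("bookkeeping of the vertical component"), but the \emph{alternative} you propose — integrating \eqref{div0} over $Y$ together with periodicity — gives nothing: by $Y'$-periodicity and the trace condition $\hat u=0$ on $y_3=0,h(y')$, both $\int_Y\lambda\,{\rm div}_{y'}\hat u'\,dy$ and $\int_Y\partial_{y_3}\hat u_3\,dy$ vanish identically, so the computation reduces to $0=0$ and never touches $\int_Y\hat u_3\,dy$. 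Likewise, testing ${\rm div}_{\eta_\varepsilon}\tilde u_\varepsilon=0$ against $\varphi(x')$ directly only reproduces \eqref{divmacro_tilde1} for $\tilde v$; to convert that into a statement about $\int_Y\hat u'\,dy$ you still need the averaging identity above, and that is where the actual content lies.

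Two further inaccuracies: (a) the unfolded incompressibility relation is $\varepsilon^{-1}{\rm div}_{y'}\hat u_\varepsilon'+\eta_\varepsilon^{-1}\partial_{y_3}\hat u_{\varepsilon,3}=0$ with \emph{no} ${\rm div}_{x'}\hat u_\varepsilon'$ term, since by Remark \ref{remarkCV} $\hat u_\varepsilon$ is piecewise constant in $x'$; (b) in case (ii) the claim that the vertical-divergence contribution is killed by showing "$\partial_{y_3}\hat u_{\varepsilon,3}$ is of lower order" is not warranted by the a priori bounds — after multiplying the unfolded divergence by $\varepsilon$ the two terms ${\rm div}_{y'}\hat u_\varepsilon'$ and $(\varepsilon/\eta_\varepsilon)\partial_{y_3}\hat u_{\varepsilon,3}$ are of the same size $O(\varepsilon\,\eta_\varepsilon^{1/(p-1)})$. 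The clean argument is to test against $\varphi(x',y')$ independent of $y_3$, so the vertical term disappears \emph{exactly} from the no-slip condition on $y_3=0,h(y')$, and then rescale by $\eta_\varepsilon^{-p/(p-1)}$ and pass to the limit; separately, multiplying by $\eta_\varepsilon^{-1/(p-1)}$ gives $\partial_{y_3}\hat u_3=0$ since $\eta_\varepsilon/\varepsilon\to 0$. Also note that it is case (iii), not case (ii), in which the bound $\|\eta_\varepsilon^{-p/(p-1)}D_{y'}\hat u_\varepsilon\|\le C\varepsilon/\eta_\varepsilon\to 0$ forces the $y'$-gradient of the limit to vanish; in case (ii) this factor diverges and gives no such control.
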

\begin{proof}
We proceed in four steps. \\
{\it Step 1.} Case $\eta_{\varepsilon}\approx \varepsilon$. In this case, the estimates (\ref{estim_velocidad_gorro1SIMPLE})-(\ref{estim_velocidad_gorro2}) read 
\begin{equation}\label{estimate_case1}
\left\Vert \hat{u}_{\varepsilon}\right\Vert_{L^p(\omega\times Y)^3}\leq C \eta_\varepsilon^{\frac{p}{p-1}}, \quad \left\Vert D_{y}\hat{u}_{\varepsilon}\right\Vert_{L^p(\omega\times Y)^{{3\times3}}}\leq C \eta_\varepsilon^{\frac{p}{p-1}}.
\end{equation}
The above estimates imply the existence $\hat{u}: \omega \times Y \to \mathbb{R}^3$, such that, up to a subsequence, convergences (\ref{convUgorro1}) holds. By semicontinuity and the estimates given in (\ref{estimate_case1}), we have $$\int_{\omega \times Y}\left\vert \hat{u} \right\vert^pdx^{\prime}dy\leq C,\quad \int_{\omega \times Y}\left\vert D_{y}\hat{u} \right\vert^pdx^{\prime}dy\leq C,$$ which shows that $\hat{u}\in L^p(\omega;W^{1,p}(Y)^3)$.

It would remain to prove the $Y^{\prime}$-periodicity of $\hat{u}$ in $y^{\prime}$. This can be obtain by proceeding as in Lemma 5.4 in \cite{grau1}.

Since ${\rm div}_{\eta_\varepsilon}\tilde{u}_{\varepsilon}=0$ in $\Omega$, then by definition of $\hat{u}_{\varepsilon}$ we have $\varepsilon^{-1}{\rm div}_{y^\prime}\hat{u}^\prime_\varepsilon+\eta_\varepsilon^{-1}\partial_{y_3}\hat{u}_{\varepsilon,3}=0$. Multiplying by $\eta_\varepsilon^{-1/(p-1)}$ we obtain
\begin{equation*}\label{div_important} 
{\eta_\varepsilon\over \varepsilon}\eta_\varepsilon^{-\frac{p}{p-1}}{\rm div}_{y^\prime}\hat{u}_{\varepsilon}^\prime + \eta_\varepsilon^{-\frac{p}{p-1}}\partial_{y_3}\hat{u}_{\varepsilon,3}=0,\quad \text{\ in\ } \omega \times Y,
\end{equation*}
which, combined with (\ref{convUgorro1}) and $\eta_\varepsilon/\varepsilon\to \lambda$, proves (\ref{div0}).\\

{\it Step 2.} Case $\eta_{\varepsilon}\ll \varepsilon$. In this case, from the second estimates (\ref{estim_velocidad_gorro1SIMPLE}) and 
(\ref{estim_velocidad_gorro2}), up to a subsequence and using a semicontinuity argument, there exists $\hat{u}\in W^{1,p}(0,h(y'); L^p(\omega\times Y^{\prime})^3)$ such that 
\begin{equation}\label{convernecesary_2}
\eta_{\varepsilon}^{-\frac{p}{p-1}}\hat{u}_{\varepsilon}\rightharpoonup \hat{u}\text{\ in \ }W^{1,p}(0,h(y');L^{p}(\omega\times Y^{\prime})^3),
\end{equation}
which implies
\begin{equation*}\label{conv_div_123_case_2}
\eta_\varepsilon^{-{p\over p-1}}{\rm div}_{y^\prime}\hat{u}_{\varepsilon}^{\prime}\rightharpoonup {\rm div}_{y^\prime}\hat{u}^{\prime}\text{\ in \ }W^{1,p}(0,h(y');W^{-1,p'}(Y';L^p(\omega))).
\end{equation*}

Since ${\rm div}_{\eta_\varepsilon}\tilde{u}_{\varepsilon}=0$ in $\Omega$, then by definition of $\hat{u}_{\varepsilon}$ we have $\varepsilon^{-1}{\rm div}_{y^\prime}\hat{u}^\prime_\varepsilon+\eta_\varepsilon^{-1}\partial_{y_3}\hat{u}_{\varepsilon,3}=0$. Multiplying by $\eta_\varepsilon^{-1/(p-1)}$, we obtain
\begin{equation}\label{div_important_2} 
{\eta_\varepsilon\over \varepsilon}\eta_\varepsilon^{-\frac{p}{p-1}}{\rm div}_{y^\prime}\hat{u}_{\varepsilon}^\prime + \eta_\varepsilon^{-\frac{p}{p-1}}\partial_{y_3}\hat{u}_{\varepsilon,3}=0,\quad \text{\ in\ } \omega \times Y.
\end{equation}
From the above convergences and $\eta_\varepsilon/\varepsilon\to 0$, we can deduce that $\partial_{y_3}\hat u_3=0$, and so $\hat u_3$ does not depend on $y_3$.

Now, we prove (\ref{div0-2}). To do this, we consider $\varphi\in C^1_c(\omega\times Y')$ as test function in (\ref{div_important_2}), which gives
$$\int_{\omega\times Y}{\rm div}_{y'}\hat u_\varepsilon'\,\varphi(x',y')\,dx'dy=0.$$
Multiplying by $\eta_\varepsilon^{-{p\over p-1}}$ and from convergence (\ref{convernecesary_2}), we get (\ref{div0-2}).

In order to proof the $Y^{\prime}$-periodicity of $\hat{u}$ in $y^{\prime}$, we proceed similarly to the step 1.\\

{\it Step 3.} In order to prove (\ref{divmacro1}), let us first prove the following relation between $\tilde{v}$ and $\hat{u}$ for the cases $\eta_\varepsilon \approx \varepsilon$ or $\eta_\varepsilon \ll \varepsilon$,
\begin{equation}\label{relation}
{1\over |Y'|}\int_{Y}\hat{u}(x^{\prime},y)dy=\int_0^{h_{\rm max}}\tilde{v}(x^{\prime},y_3)dy_3.
\end{equation}
For this, let us consider $\varphi\in C_c^1(\omega)$. We observe that using the definition (\ref{uhat}) of $\hat{u}_{\varepsilon}$, we obtain
\begin{equation*}
\begin{array}{l}\displaystyle
\eta_{\varepsilon}^{-\frac{p}{p-1}}\int_{\omega}\int_{Y}\hat{u}_{\varepsilon}(x^{\prime},y)\varphi(x^{\prime})dydx^{\prime}\\
\\
\displaystyle
=\eta_{\varepsilon}^{-\frac{p}{p-1}}\displaystyle\sum_{k^{\prime}\in T_{\varepsilon}}\int_{Y^{\prime}_{k^{\prime},{\varepsilon}}}\int_{Y}\tilde{u}_{\varepsilon}({\varepsilon}k^{\prime}+{\varepsilon}y^{\prime},y_3)\,\varphi({\varepsilon}k^{\prime}+{\varepsilon}y^{\prime})dydx^{\prime}+O_\varepsilon.
\end{array}
\end{equation*}
We observe that $\tilde{u}_{\varepsilon}$ and $\varphi$ do not depend on $x^{\prime}$, then we can deduce
\begin{equation*}
\begin{array}{l}\displaystyle
\eta_{\varepsilon}^{-\frac{p}{p-1}}\int_{\omega}\int_{Y}\hat{u}_{\varepsilon}(x^{\prime},y)\varphi(x^{\prime})dydx^{\prime}\\
\displaystyle
=\eta_{\varepsilon}^{-\frac{p}{p-1}}\varepsilon^2|Y'|\displaystyle\sum_{k^{\prime}\in T_{\varepsilon}}\int_{Y^{\prime}}\int_0^{h(y')}\tilde{u}_{\varepsilon}({\varepsilon}k^{\prime}+{\varepsilon}y^{\prime},y_3)\,\varphi({\varepsilon}k^{\prime}+{\varepsilon}y^{\prime})dy_3dy^{\prime}+O_\varepsilon.
\end{array}
\end{equation*}
By the change of variables (\ref{CV}) and the $Y'$-periodicity of $h$, we obtain
$$\begin{array}{l}\displaystyle
\eta_{\varepsilon}^{-\frac{p}{p-1}}\int_{\omega}\int_{Y}\hat{u}_{\varepsilon}(x^{\prime},y)\varphi(x^{\prime})dydx^{\prime}\\
\displaystyle\qquad=\eta_{\varepsilon}^{-\frac{p}{p-1}}|Y'|\displaystyle\sum_{k^{\prime}\in T_{\varepsilon}}\int_{Y^{\prime}_{k^{\prime},{\varepsilon}}}\int_0^{h(x'/\varepsilon)}\tilde{u}_{\varepsilon}(x^{\prime},y_3)\,\varphi(x^{\prime})dy_3dx^{\prime}+O_\varepsilon\\
\displaystyle\qquad=\eta_{\varepsilon}^{-\frac{p}{p-1}}|Y'|\displaystyle\sum_{k^{\prime}\in T_{\varepsilon}}\int_{Y^{\prime}_{k^{\prime},{\varepsilon}}}\int_0^{h_{\rm max}}\tilde{v}_{\varepsilon}(x^{\prime},y_3)\,\varphi(x^{\prime})dy_3dx^{\prime}+O_\varepsilon\\
\displaystyle\qquad=\eta_{\varepsilon}^{-\frac{p}{p-1}}|Y'|\displaystyle\int_{\Omega}\tilde{v}_{\varepsilon}(x^{\prime},y_3)\,\varphi(x^{\prime})dy_3dx^{\prime}+O_\varepsilon.
\end{array}$$
Taking into account the convergences (\ref{convtilde1}), (\ref{convUgorro1}) and  (\ref{converneUgorroUtilde}), we obtain (\ref{relation}) for  the cases $\eta_\varepsilon\approx \varepsilon$ or $\eta_\varepsilon\ll \varepsilon$. Since $\tilde v_3=0$, we deduce that $\int_{Y}\hat u_3\,dy=0$ a.e. in $\omega$. Finally, relation (\ref{relation}) together with (\ref{divmacro_tilde1}) implies (\ref{divmacro1}).
\par

{\it Step 4.} Case $\eta_{\varepsilon}\gg \varepsilon$. In this case, by the second estimate in (\ref{estim_velocidad_gorro1SIMPLE}) and estimate (\ref{estim_velocidad_gorro2}), up to a subsequence and using a semicontinuity argument, there exists $\hat{u}\in W^{1,p}(0,h(y'); L^p(\omega\times Y^{\prime})^3)$ such that 
\begin{equation}\label{convernecesary_case_2}
\eta_{\varepsilon}^{-\frac{p}{p-1}}\hat{u}_{\varepsilon}\rightharpoonup \hat{u}\text{\ in \ }W^{1,p}(0,h(y');L^{p}(\omega\times Y^{\prime})^3).
\end{equation}

Since $\varepsilon^{-1}\eta_\varepsilon^{-\frac{1}{p-1}}D_{y'}\hat{u}_{\varepsilon}$ is bounded in $L^p(\omega \times Y)^3$, we observe that $\eta_\varepsilon^{-\frac{p}{p-1}}D_{y'}\hat{u}_{\varepsilon}$  is also bounded, and tends to zero. This together with (\ref{convernecesary_case_2}) implies
\begin{equation*}
\eta_{\varepsilon}^{-\frac{p}{p-1}}D_{y'}\hat{u}_{\varepsilon}\rightharpoonup 0\text{\ in \ }W^{1,p}\left(0,h(y');L^p(\omega\times Y^{\prime})^{3\times 2}\right),
\end{equation*}
 and so $\hat{u}$ does not depend on $y'$.

Taking into account (\ref{convtilde1_super_2}), proceeding as in (\ref{relation}) but in $\omega\times \Pi^-$, we obtain
\begin{equation*}\label{relation2}
{1\over |Y'|}\int_{\Pi^-}\hat{u}(x^{\prime},y)dy=\int_0^{h_{\rm min}} \tilde v(x',y_3)\,dy_3\,.
\end{equation*}
Since $\hat{u}$ does not depend on $y^\prime$, we have that $\hat{u}=(\tilde{v}^\prime,0)$.
\end{proof}

\section{Effective models}\label{S6}
In this section, we will multiply system (\ref{2}) by a test function having the form of the limit $\hat{u}$ (as explicated in Lemma \ref{lemma_gorro}), and we will use the convergences given in the previous section in order to identify the effective model in every case. 

\begin{theorem}\label{thm_homenized}
Assume that $9/5\leq p< +\infty$. We distingue three cases:
\begin{itemize}
\item[i)] If $\eta_{\varepsilon}\approx \varepsilon$, with $\eta_\varepsilon/\varepsilon\to \lambda$, $0<\lambda<+\infty$, then $(\eta_\varepsilon^{-\frac{p}{p-1}}\hat{u}_{\varepsilon},\hat{P}_\varepsilon)$ converges to the unique solution $(\hat{u}(x^{\prime},y),\tilde{P}(x^{\prime}))$ in $L^p(\omega; W^{1,p}(Y)^3)\times L^{p'}_0(\omega)\cap W^{1,p'}(\omega)$, with $\int_{Y}\hat u_3\,dy=0$, of the effective problem
\begin{equation}
\left\{
\begin{array}{rcl}
\displaystyle -\mu\,{\rm div}_{\lambda}\left( S\left(\mathbb{D}_{\lambda}\left[\hat{u} \right] \right) \right) + \nabla_{\lambda} \hat{q} 
&=& 
 f^{\prime}-\nabla_{x^{\prime}}\tilde{P}\quad \text{\ in \ }\omega \times Y,\\
 \displaystyle{\rm div}_{\lambda} \hat{u} & =&  0\quad \text{\ in \ }\omega \times Y,\\
 \hat{u}&=&0\text{\ on \ }y_3=0,h(y'),\\
 \displaystyle{\rm div}_{x^{\prime}}\left( \int_Y \hat{u}^{\prime}(x^{\prime},y)dy \right)&=&0 \text{\ in \ }\omega,\\
 \displaystyle \left( \int_Y \hat{u}^{\prime}(x^{\prime},y)dy \right)\cdot n& =& 0\text{\ on \ }\partial \omega,\\
  y'\to \hat{u},\hat{q}\quad Y'-\text{periodic},
\end{array}
\right. \label{effective1}%
\end{equation}
where $\mathbb{D}_{\lambda}\left[\cdot\right]=\lambda\mathbb{D}_{y^\prime}\left[\cdot\right]+ \partial_{y_3}\left[\cdot\right]$, $\nabla_{\lambda}=(\lambda \nabla_{y^\prime},\partial_{y_3})$ and ${\rm div}_\lambda=\lambda{\rm div}_{y^\prime} + \partial_{y_3}$.
\item[ii)] If $\eta_{\varepsilon}\ll \varepsilon$, then $(\eta_\varepsilon^{-\frac{p}{p-1}}\hat{u}_\varepsilon,\hat{P}_\varepsilon)$ converges to the unique solution $(\hat{u}(x^{\prime},y),\tilde{P}(x^{\prime}))$ in $L^p(\omega;W^{1,p}(Y)^3)\times L^{p'}_0(\omega)\cap W^{1,p'}(\omega)$, with $\int_{Y} \hat{u}_3 dy=0$ and $\hat u_3$ independent of $y_3$, of the effective problem
\begin{equation}
\left\{
\begin{array}{rcl}
\displaystyle -\mu\,\partial_{y_3}\left(S\left(\partial_{y_3}\left[\hat{u}^\prime \right]\right) \right) + \nabla_{y^\prime} \hat{q} 
&=& 
 f^{\prime}-\nabla_{x^{\prime}}\tilde{P}\quad \text{\ in \ }\omega \times Y\\
\displaystyle{\rm div}_{y^\prime} \hat{u}^\prime & =&  0\quad \text{\ in \ }\omega \times Y,\\
\displaystyle
 \hat{u}^\prime&=&0\text{\ on \ }y_3=0,h(y'),\\
 \displaystyle{\rm div}_{x^{\prime}}\left( \int_{Y} \hat{u}^{\prime}(x^{\prime},y)dy \right)&=&0 \text{\ in \ }\omega,\\
 \displaystyle \left( \int_{Y} \hat{u}^{\prime}(x^{\prime},y)dy \right)\cdot n& =& 0\text{\ on \ }\partial \omega,\\
  y'\to \hat{u}^\prime,\hat{q}\quad Y^{\prime}-\text{periodic}.
\end{array}
\right. \label{effective2}%
\end{equation}
\item[iii)]  If $\eta_\varepsilon \gg \varepsilon$, then the extension $(\eta_\varepsilon^{-{p\over p-1}}\tilde{v}_\varepsilon,\tilde{P}_\varepsilon)$ converges to the unique solution $(\tilde{v}(x^{\prime},y_3),\tilde{P}(x^{\prime}))$ in $W^{1,p}(0,h_{\rm min};L^p(\omega)^3)\times L^{p'}_0(\omega)\cap W^{1,p'}(\omega)$, with $\tilde v_3=0$, of the effective problem
\begin{equation}\label{Homogenized_3}
\left\{\begin{array}{rcl}\displaystyle
\displaystyle-\mu\,\partial_{y_3}S\left(\partial_{y_3}\tilde{v}^\prime\right)&=&2^{p\over 2}\left(f^\prime(x^\prime)-\nabla_{x^\prime}\tilde{P}(x^\prime)\right)\ \text{ in\ }\Omega^-\,,
\\
\displaystyle\tilde{v}^\prime&=&0 \quad\text{ on\ }y_3=0,h_{\rm min},
\\
\displaystyle {\rm div}_{x^\prime}\left(\int_0^{h_{\rm min}} \tilde{v}^\prime(x^\prime,y_3)dy_3\right)&=&0 \quad\text{ in\ }\omega\,,\\
\displaystyle \left(\int_0^{h_{\rm min}} \tilde{v}^\prime(x^\prime,y_3)dy_3\right)\cdot n&=&0\quad\text{ in\ }\omega\,.\end{array}\right.
\end{equation}
\end{itemize}
\end{theorem}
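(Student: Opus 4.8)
The strategy is to pass to the limit in the weak formulation of (\ref{2})--(\ref{no-slip_tilde}), rescaled by $\eta_\varepsilon^{-p/(p-1)}$, using the compactness results of Section \ref{S5} together with a monotonicity argument (Minty's trick) to identify the nonlinearity $S$. The weak formulation reads: for every $\tilde\varphi\in W^{1,p}_0(\widetilde\Omega_\varepsilon)^3$ with ${\rm div}_{\eta_\varepsilon}\tilde\varphi=0$,
$$\mu\int_{\widetilde\Omega_\varepsilon}S(\mathbb{D}_{\eta_\varepsilon}[\tilde u_\varepsilon]):\mathbb{D}_{\eta_\varepsilon}[\tilde\varphi]\,dx'dy_3+\int_{\widetilde\Omega_\varepsilon}(\tilde u_\varepsilon\cdot\nabla_{\eta_\varepsilon})\tilde u_\varepsilon\cdot\tilde\varphi\,dx'dy_3=\int_{\widetilde\Omega_\varepsilon}f\cdot\tilde\varphi\,dx'dy_3.$$
As test functions I would take oscillating fields $\tilde\varphi_\varepsilon(x',y_3)=\eta_\varepsilon^{p/(p-1)}\varphi_0(x')\,w(x'/\varepsilon,y_3)$ plus a divergence-correcting term provided by Lemma \ref{lemma_new1}, chosen so that ${\rm div}_{\eta_\varepsilon}\tilde\varphi_\varepsilon=0$ exactly, where $\varphi_0\in C^1_c(\omega)$ and $w\in W^{1,p}_\sharp(Y)^3$ vanishes on $y_3=0,h(y')$ and satisfies ${\rm div}_\lambda w=0$ (in case ii) one imposes ${\rm div}_{y'}w'=0$ instead, in case iii) one takes $w=w(y_3)$ supported in $(0,h_{\rm min})$). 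Unfolding these fields through (\ref{uhat}) turns every integral into one over $\omega\times Y$ (resp.\ over $\omega\times\Pi^-$ in case iii)).

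Dividing the weak formulation by $\eta_\varepsilon^{p/(p-1)}$, I would pass to the limit term by term. By the estimates used in Lemma \ref{lem_estim_nabla_pressure} and Lemma \ref{estOmega}, the inertial term is bounded by $C\eta_\varepsilon^{(3-p)/(p-1)}$ for $9/5\le p<3$ and by $C\eta_\varepsilon^{(p+1)/(p-1)}$ for $p\ge3$, times the norm of the rescaled test field, hence it vanishes precisely because $9/5\le p$; the pressure term is absent because $\tilde\varphi_\varepsilon$ is divergence-free; the force term converges and produces $\int_{\omega\times Y}f'\cdot\varphi_0 w'$. Writing $\xi_\varepsilon=\eta_\varepsilon^{-1/(p-1)}\mathbb{D}_{\eta_\varepsilon}[\tilde u_\varepsilon]$, which is bounded in $L^p$ by Lemma \ref{estOmega} and whose unfolding $\hat\xi_\varepsilon$ converges weakly to $\mathbb{D}_\lambda[\hat u]$ (resp.\ $\partial_{y_3}[\hat u']$, resp.\ $\partial_{y_3}\tilde v'$), and letting $\chi$ be the weak $L^{p'}$-limit of $S(\hat\xi_\varepsilon)$, one reaches the limit identity
$$\mu\int_{\omega\times Y}\chi:\mathbb{D}_\lambda[\Phi]\,dx'dy=\int_{\omega\times Y}f'(x')\cdot\Phi'\,dx'dy$$
for every $\Phi\in L^p(\omega;W^{1,p}_\sharp(Y)^3)$ with ${\rm div}_\lambda\Phi=0$, $\Phi=0$ on $y_3=0,h(y')$, ${\rm div}_{x'}(\int_Y\Phi')=0$ and $(\int_Y\Phi')\cdot n=0$ on $\partial\omega$ (with the obvious reductions in cases ii) and iii)); note that $\hat u$ itself is admissible, by Lemma \ref{lemma_gorro}.

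It remains to identify $\chi=S(\mathbb{D}_\lambda[\hat u])$ (resp.\ $S(\partial_{y_3}[\hat u'])$, $S(\partial_{y_3}\tilde v')$). For this I would use the energy identity obtained by testing (\ref{2}) with $\tilde u_\varepsilon$ itself: since ${\rm div}_{\eta_\varepsilon}\tilde u_\varepsilon=0$ and $\tilde u_\varepsilon=0$ on $\partial\widetilde\Omega_\varepsilon$, the convective term vanishes, so $\mu\|\mathbb{D}_{\eta_\varepsilon}[\tilde u_\varepsilon]\|_{L^p(\widetilde\Omega_\varepsilon)}^p=\int_{\widetilde\Omega_\varepsilon}f\cdot\tilde u_\varepsilon$ and, after rescaling and using the weak convergence of the rescaled velocity, $\mu\int_{\omega\times Y}|\hat\xi_\varepsilon|^p\,dx'dy\to\int_\omega f'\cdot\tilde V'\,dx'$ with $\tilde V'=\int_0^{h_{\rm max}}\tilde v'\,dy_3=\int_Y\hat u'\,dy$. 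Choosing $\Phi=\hat u$ in the limit identity gives $\mu\int_{\omega\times Y}\chi:\mathbb{D}_\lambda[\hat u]\,dx'dy=\int_\omega f'\cdot\tilde V'\,dx'$ as well, so $\lim\int_{\omega\times Y}S(\hat\xi_\varepsilon):\hat\xi_\varepsilon=\int_{\omega\times Y}\chi:\mathbb{D}_\lambda[\hat u]$. Inserting this into $0\le\int_{\omega\times Y}(S(\hat\xi_\varepsilon)-S(\Psi)):(\hat\xi_\varepsilon-\Psi)$ for arbitrary $\Psi\in L^p(\omega\times Y)^{3\times3}$, passing to the limit, and then taking $\Psi=\mathbb{D}_\lambda[\hat u]-t\Theta$ with $t\downarrow0$ for arbitrary $\Theta$ yields $\chi=S(\mathbb{D}_\lambda[\hat u])$. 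De Rham's theorem applied at the microscopic level then recovers the local pressure $\hat q$ (from ${\rm div}_\lambda\hat u=0$) and the macroscopic constraint (\ref{divmacro1}) produces $\tilde P\in W^{1,p'}(\omega)$, giving (\ref{effective1}), (\ref{effective2}) and (\ref{Homogenized_3}); strict monotonicity of $S$ together with the monotonicity and coerciveness of $A^\lambda$, $A^0$ gives uniqueness for each effective problem, hence the whole sequences converge, and the factor $2^{p/2}$ in (\ref{Homogenized_3}) appears when $\mathbb{D}[\cdot]$ of the laminar profile $(\tilde v'(y_3),0)$ is expressed through $\partial_{y_3}\tilde v'$. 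The main obstacle I anticipate is making the Minty argument rigorous in the unfolded variables: it relies on the energy identity surviving the passage to the limit — precisely where the restriction $9/5\le p$ is needed, to annihilate the convective term — and on constructing admissible, exactly divergence-free oscillating test functions compatible with the no-slip condition on $\widetilde\Sigma_\varepsilon$, the point where the geometry of the humps (assumptions (H1)--(H3)) and the correction operator of Lemma \ref{lemma_new1} enter.
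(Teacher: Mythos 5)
Your proposal is in the same family as the paper's proof (unfolding $+$ Minty-type monotonicity), and the overall skeleton is sound, but you depart from the paper in two places: one is a harmless variant, the other contains a genuine gap.

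\emph{Harmless variant.} The paper runs Browder--Minty directly: in each of cases i), ii) it keeps the pressure in the unfolded weak formulation (\ref{formvarcv1-0}), tests with $\phi_\varepsilon=(\lambda\tfrac{\varepsilon}{\eta_\varepsilon}\varphi'-\eta_\varepsilon^{-p/(p-1)}\hat u'_\varepsilon,\varphi_3-\eta_\varepsilon^{-p/(p-1)}\hat u_{\varepsilon,3})$, uses the pointwise monotonicity inequality for $S$, and passes to the limit using \emph{strong} $L^{p'}$-convergence of $\hat P_\varepsilon$ (justified via \cite{bourgeat_fissure} and Proposition 2.9 of \cite{Ciora2}) together with (\ref{divmacro1}) to kill the pressure term, finishing with Minty's lemma. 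You instead first establish the linear limit identity for the weak limit $\chi$ of $S(\hat\xi_\varepsilon)$ against divergence-free test fields, then use the energy identity $\mu\|\mathbb{D}_{\eta_\varepsilon}[\tilde u_\varepsilon]\|_{L^p}^p=\int f\cdot\tilde u_\varepsilon$ to show $\lim\int S(\hat\xi_\varepsilon):\hat\xi_\varepsilon=\int\chi:\mathbb{D}_\lambda[\hat u]$, and only then run Minty. These are two standard incarnations of the same method; your energy-identity route is correct, and the scaling check $\eta_\varepsilon^{-p/(p-1)}\int f\cdot\tilde u_\varepsilon\to\int_\omega f'\cdot\tilde V'\,dx'$ works with Lemma \ref{LemmaConvergenceUtilde}.

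\emph{The gap.} Your test-function plan requires an \emph{exactly} ${\rm div}_{\eta_\varepsilon}$-free oscillating field of the form $\eta_\varepsilon^{p/(p-1)}\varphi_0(x')w(x'/\varepsilon,y_3)$ plus a corrector. Two things go wrong. First, if $w$ satisfies only ${\rm div}_\lambda w=0$, then
$$
{\rm div}_{\eta_\varepsilon}\bigl(\varphi_0\,w(x'/\varepsilon,\cdot)\bigr)=\nabla_{x'}\varphi_0\cdot w'+\varphi_0\Bigl(\tfrac1\varepsilon{\rm div}_{y'}w'+\tfrac1{\eta_\varepsilon}\partial_{y_3}w_3\Bigr)
=\nabla_{x'}\varphi_0\cdot w'+\tfrac{\varphi_0}{\eta_\varepsilon}\Bigl(\tfrac{\eta_\varepsilon}{\varepsilon}-\lambda\Bigr){\rm div}_{y'}w',
$$
and the last term blows up like $\tfrac1{\eta_\varepsilon}|\tfrac{\eta_\varepsilon}{\varepsilon}-\lambda|$, which need not be bounded: the hypothesis is only $\eta_\varepsilon/\varepsilon\to\lambda$, not $\eta_\varepsilon/\varepsilon=\lambda$. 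You would need either an $\varepsilon$-dependent family $w_\varepsilon$ with ${\rm div}_{\eta_\varepsilon/\varepsilon}w_\varepsilon=0$ and $w_\varepsilon\to w$, or a genuine Bogovskii-type corrector on $\widetilde\Omega_\varepsilon$ with controlled $W^{1,p}$ norm. Second, Lemma \ref{lemma_new1} does not do that job: it is a \emph{restriction/extension} operator (extend $\tilde\varphi$ from $\Pi$ into the hump $Y_m$ while preserving an already-existing divergence-free structure), used to build $R_p^\varepsilon$ and hence the pressure extension; it is not a right inverse of the divergence. The paper's choice to carry the extended pressure $\tilde P_\varepsilon$ through the argument and use its strong convergence is precisely designed to \emph{avoid} constructing exactly divergence-free oscillating test functions. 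If you keep the pressure term as the paper does, your argument closes; if you insist on exactly divergence-free tests, you must supply the missing corrector.

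A minor remark: for uniqueness the paper invokes Theorem 8 of \cite{BourMik} rather than a bare appeal to strict monotonicity of $S$, because uniqueness of (\ref{effective1}) is a two-level statement (microscopic Stokes plus macroscopic Reynolds via $A^\lambda$); your phrasing gestures at the right idea but skips this structure.
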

\begin{proof}
First of all, we choose a test function $\varphi(x^{\prime},y)\in \mathcal{D}(\omega;C_{\sharp}^{\infty}(Y)^3)$. Multiplying (\ref{2}) by $\varphi(x^\prime,x^\prime/\varepsilon,y_3)$, integrating by parts, and taking into account that reasoning as in the proof of Lemma \ref{lem_estim_nabla_pressure} we get
$$\int_{\widetilde\Omega_\varepsilon}(\tilde u_\varepsilon\cdot \nabla_{\eta_\varepsilon})\tilde u_\varepsilon\, \varphi\,dx'dy_3=O_\varepsilon,$$
then we have
\begin{equation*}\begin{array}{l}\displaystyle
\mu\int_{\widetilde \Omega_\varepsilon}S\left(\mathbb{D}_{\eta_\varepsilon} \left[\tilde{u}_\varepsilon \right] \right):\left(\mathbb{D}_{x^\prime}\left[\varphi\right]+\frac{1}{\varepsilon} \mathbb{D}_{y^\prime}\left[\varphi\right]+\frac{1}{\eta_\varepsilon}\partial_{y_3}\left[\varphi\right]\right) dx^{\prime}dy_3\\
\displaystyle +\int_{\widetilde \Omega_\varepsilon} \nabla_{\eta_\varepsilon}\tilde{p}_\varepsilon\, \varphi\,dx^{\prime}dy_3=\int_{\widetilde \Omega_\varepsilon}f'\cdot \varphi'\,dx^\prime dy_3+ O_\varepsilon\,.
\end{array}
\end{equation*}
Taking into account the prolongation  of the pressure, we have
$$\int_{\widetilde \Omega_\varepsilon} \nabla_{\eta_\varepsilon}\tilde{p}_\varepsilon\, \varphi^\prime\,dx^{\prime}dy_3=\int_{\Omega} \nabla_{\eta_\varepsilon}\tilde{P}_\varepsilon\,\varphi\,dx^{\prime}dy_3,$$
and so
\begin{equation}\begin{array}{l}\displaystyle
\mu\int_{\widetilde \Omega_\varepsilon}S\left(\mathbb{D}_{\eta_\varepsilon} \left[\tilde{u}_\varepsilon \right] \right):\left(\mathbb{D}_{x^\prime}\left[\varphi\right]+\frac{1}{\varepsilon} \mathbb{D}_{y^\prime}\left[\varphi\right]+\frac{1}{\eta_\varepsilon}\partial_{y_3}\left[\varphi\right]\right) dx^{\prime}dy_3\\
\displaystyle -\int_{\Omega} \tilde{P}_\varepsilon\, {\rm div}_{x^\prime} \varphi^\prime\,dx^{\prime}dy_3-\frac{1}{\varepsilon}\int_{\Omega} \tilde{P}_\varepsilon\, {\rm div}_{y^\prime} \varphi^\prime\,dx^{\prime}dy_3-\frac{1}{\eta_\varepsilon}\int_{\Omega}\tilde{P}_\varepsilon\,\partial_{y_3}\varphi_3\,dx^{\prime}dy_3\\
\displaystyle=\int_{\widetilde \Omega_\varepsilon}f'\cdot \varphi'\,dx^\prime dy_3+ O_\varepsilon\,.
\end{array}\label{formulaciondebiltilde}
\end{equation}
By the change of variables given in Remark \ref{remarkCV}, we obtain
\begin{equation*}\label{formvarcvuprime}\begin{array}{l}\displaystyle
\mu\int_{\omega\times Y}S\left(\frac{1}{\varepsilon}\mathbb{D}_{y^\prime} \left[\hat{u}_\varepsilon \right] +\frac{1}{\eta_\varepsilon}\partial_{y_3}\left[\hat{u}_\varepsilon \right]\right):\left(\frac{1}{\varepsilon} \mathbb{D}_{y^\prime}\left[\varphi\right]+\frac{1}{\eta_\varepsilon}\partial_{y_3}\left[\varphi\right]\right)dx^{\prime}dy \\
\displaystyle
-\int_{\omega\times \Pi} \hat{P}_\varepsilon\, {\rm div}_{x^\prime} \varphi^\prime\,dx^{\prime}dy-\frac{1}{\varepsilon}\int_{\omega\times \Pi} \hat{P}_\varepsilon\, {\rm div}_{y^\prime} \varphi^\prime\,dx^{\prime}dy \\
\displaystyle
-\frac{1}{\eta_\varepsilon}\int_{\omega\times \Pi}\hat{P}_\varepsilon\,\partial_{y_3}\varphi_3\,dx^{\prime}dy=\int_{\omega\times Y}f'\cdot \varphi'\,dx^\prime dy+O_\varepsilon\,,
\end{array}
\end{equation*}
which can be written by
\begin{equation}\label{formvarcv1-0}\begin{array}{l}\displaystyle
\mu\,\int_{\omega\times Y}S\left({\eta_\varepsilon\over \varepsilon}\eta_\varepsilon^{-\frac{p}{p-1}}\mathbb{D}_{y^\prime} \left[\hat{u}_\varepsilon \right] +\eta_\varepsilon^{-\frac{p}{p-1}}\partial_{y_3}\left[\hat{u}_\varepsilon \right]\right):\left( \frac{\eta_\varepsilon}{\varepsilon}\mathbb{D}_{y^\prime}\left[\varphi\right]+\partial_{y_3}\left[\varphi\right]\right)dx^{\prime}dy\\
\displaystyle-\int_{\omega\times \Pi} \hat{P}_\varepsilon\, {\rm div}_{x^\prime} \varphi^\prime\,dx^{\prime}dy-\frac{1}{\varepsilon}\int_{\omega\times \Pi} \hat{P}_\varepsilon\,{\rm div}_{y^\prime}\varphi^\prime\,dx'dy\\
\displaystyle
-\frac{1}{\eta_\varepsilon}\int_{\omega\times \Pi} \hat{P}_\varepsilon\,\partial_{y_3}\varphi_3\,dx'dy=\int_{\omega\times Y}f'\cdot \varphi'\,dx^\prime dy+O_\varepsilon\,.
\end{array}
\end{equation}

This variational formulation will be useful in the following steps.

We proceed in three steps. \\
{\it Step 1.} Case $\eta_{\varepsilon}\approx \varepsilon$, with $\eta_\varepsilon/\varepsilon\to \lambda$, $0<\lambda<+\infty$.  

First, we prove that $\hat{P}$ does not depend on the microscopic variable $y$. To do this, we consider as test function $\eta_\varepsilon \varphi(x^\prime,x^\prime/\varepsilon,y_3)$ in (\ref{formvarcv1-0}), taking into account the estimates in (\ref{estim_velocidad_gorro1}) and passing to the limit when $\varepsilon$ tends to zero by using convergence (\ref{convPgorro1}), we have 
$$\int_{\omega\times \Pi} \hat{P}\,{\rm div}_{\lambda}\varphi\,dx'dy =0,$$
which shows that $\hat{P}$ does not depend on $y$.\\

For all $\varphi \in \mathcal{D}(\omega;C_{\sharp}^{\infty}(Y)^3)$ with ${\rm div}_{\lambda}\varphi=0$ in $\omega \times Y$ and ${\rm div}_{x^\prime}(\int_Y \varphi^\prime\,dy)=0$ in $\omega$, we choose $\phi_\varepsilon=(\phi^{\prime}_{\varepsilon}, \phi_{\varepsilon,3})$  defined by $$\phi^\prime_\varepsilon=\lambda{\varepsilon\over \eta_\varepsilon}\varphi^\prime-\eta_\varepsilon^{-\frac{p}{p-1}}\hat{u}_\varepsilon^\prime,\quad \phi_{\varepsilon,3}=\varphi_3-\eta_\varepsilon^{-{p\over p-1}}\hat u_{\varepsilon,3}\,,$$ as a test function in (\ref{formvarcv1-0}). Due to monotonicity, we have
\begin{equation*}\begin{array}{l}\displaystyle
\mu \int_{\omega\times Y}S\left(\frac{\eta_\varepsilon}{\varepsilon}\mathbb{D}_{y^\prime} \left[\varphi \right] +\partial_{y_3}\left[\varphi \right]\right):\left( \frac{\eta_\varepsilon}{\varepsilon}\mathbb{D}_{y^\prime}\left[\phi_\varepsilon \right]+\partial_{y_3}\left[\phi_\varepsilon \right]\right)dx^{\prime}dy \\
\displaystyle
-\int_{\omega\times \Pi} \hat{P}_\varepsilon\, {\rm div}_{x^\prime} \phi^\prime_{\varepsilon}\,dx^{\prime}dy
\ge\int_{\omega\times Y}f'\cdot \phi^\prime_{\varepsilon}\,dx^\prime dy+O_\varepsilon\,.
\end{array}
\end{equation*}
Thus, we can use  the convergences (\ref{convPgorro1}) and (\ref{convUgorro1}). If we argue similarly as in \cite{bourgeat_fissure}, we have that the convergence of the pressure is in fact strong. This implies that the convergence of the pressure $\hat P_\varepsilon$ is also in fact strong (see Proposition 2.9 in \cite{Ciora2}).  Then, when passing to the limit, the second term contributes nothing because the limit of $\hat{P}_\varepsilon$ does not depend on $y$ and $\hat u'$ satisfies (\ref{divmacro1}). Taking into account that $\lambda\,\varepsilon/\eta_\varepsilon \to 1$, we obtain
\begin{equation*}\begin{array}{l}\displaystyle
\mu \int_{\omega\times Y}S\left(\lambda\mathbb{D}_{y^\prime} \left[\varphi \right] +\partial_{y_3}\left[\varphi \right]\right):\left(\lambda \mathbb{D}_{y^\prime}\left[\varphi-\hat{u} \right]+\partial_{y_3}\left[\varphi-\hat{u} \right]\right)dx^{\prime}dy \\
\displaystyle
\ge\int_{\omega\times Y}f'\cdot (\varphi^\prime-\hat{u}^\prime)\,dx^\prime dy\,,
\end{array}
\end{equation*}
which, due to Minty Lemma \cite{Lions2}, is equivalent to
\begin{equation*}
-\mu\,{\rm div}_{\lambda}\left( S\left(\mathbb{D}_{\lambda}\left[\hat{u}' \right] \right) \right)=f' \text{\ \ in \ \ }\omega \times Y.
\end{equation*}
By density 
\begin{equation}\label{formvarcv1_lim}
\mu\,\int_{\omega \times Y}S\left(\mathbb{D}_{\lambda}\left[\hat{u} \right] \right): \mathbb{D}_{\lambda}\left[\varphi \right]dx^\prime dy=\int_{\omega \times Y}f^\prime\,\varphi^\prime\,dx^\prime dy
\end{equation}
holds for every function $\varphi$ in the Hilbert space $V$ defined by 
$$V=\left\{\begin{array}{l}
\displaystyle
\varphi(x^\prime,y)\in L^p(\omega;W^{1,p}_{\sharp}(Y)^3),\ \text{ such that }\\
\displaystyle{\rm div}_{\lambda}\varphi(x^\prime,y)=0\ \text{ in }\omega\times Y,\quad {\rm div}_{x^\prime}\left(\int_{Y}\varphi(x^\prime,y)\,dy\right)=0\ \text{ in }\omega,\\
\displaystyle 
\varphi(x^\prime,y)=0\ \text{ in }\omega\times Y_s,\quad \left(\int_{Y}\varphi(x^\prime,y)\,dy\right)\cdot n=0\ \text{ on }\omega
\end{array}\right\}\,.$$
By Lax-Milgram lemma, the variational formulation (\ref{formvarcv1_lim}) in the Hilbert space $V$ admits a unique solution $\hat{u}$ in $V$. Reasoning as in \cite{Allaire0}, the orthogonal of $V$ with respect to the usual scalar product in $L^p(\omega\times Y)$ is made of gradients of the form $\nabla_{x^\prime}q(x^\prime)+\nabla_{\lambda}\hat{q}(x',y)$, with $q(x^\prime)\in L^{p^\prime}_0(\omega)$ and $\hat q(x^\prime,y)\in L^{p^\prime}(\omega;W^{1,p}_{\sharp}(Y))$.  Therefore, by integration by parts, the variational formulation (\ref{formvarcv1_lim}) is equivalent to the effective system (\ref{effective1}).  It remains to prove that the pressure $\tilde P (x^\prime)$, arising as a Lagrange multiplier of the incompressibility constraint 
${\rm div}_{x^\prime}(\int_{Y}\hat u(x^\prime,y)dy)=0$, is the same as the limit of the pressure $\tilde{P}_\varepsilon$. This can be easily done by multiplying equation (\ref{2}) by a test function with ${\rm div}_{\lambda}$ equal to zero, and identifying limits. Since (\ref{effective1}) admits a unique solution, then the complete sequence $(\eta_\varepsilon^{-p/(p-1)}\hat{u}_{\varepsilon},\hat{P}_\varepsilon)$ converges to the solution $(\hat u(x^\prime,y),\hat P(x^\prime))$. Finally, from Theorem 8 in \cite{BourMik} we have that system  (\ref{effective1}) has a unique solution and  moreover $\tilde P\in W^{1,p'}(\omega)$. \\

{\it Step 2.}  Case $\eta_{\varepsilon}\ll \varepsilon$. 

First, we prove that $\hat{P}$ does not depend on the vertical variable $y_3$. To do this, we consider as test function $(0, \eta_\varepsilon \varphi_3(x^\prime,x^\prime/\varepsilon,y_3))$ in (\ref{formvarcv1-0}), taking into account the estimates in (\ref{estim_velocidad_gorro1}) and passing to the limit when $\varepsilon$ tends to zero by using the convergence (\ref{convPgorro1}), we have 
$$\int_{\omega\times \Pi} \hat{P}\,\partial_{y_3}\varphi_3\,dx^\prime dy=0,$$
which shows that $\hat{P}$ does not depend on $y_3$.\\

Let us now prove that $\hat{P}$ does not depend on the microscopic variable $y^\prime$. For this, we take now as test function  $(\varepsilon \varphi^\prime(x^\prime,x^\prime/ \varepsilon,y_3),0)$  in (\ref{formvarcv1-0}).   By using estimates in (\ref{estim_velocidad_gorro1}) and the convergence (\ref{convPgorro1}), we get
$$\int_{\omega\times \Pi}\hat{P}\,{\rm div}_{y'}\varphi^\prime\,dx^\prime dy=0$$
which implies that $\hat{P}$ does not depend on $y^\prime$.  Thus, we conclude that $\hat{P}$ does not depend on the entire variable $y$.
\\

For all $\varphi \in \mathcal{D}(\omega;C_{\sharp}^{\infty}(Y)^3)$ with $\varphi_3$ independent of $y_3$, ${\rm div}_{y'}\varphi'=0$ in $\omega \times Y$ and ${\rm div}_{x^\prime}(\int_Y \varphi^\prime\,dy)=0$ in $\omega$, we choose $\phi_\varepsilon=\varphi-\eta_\varepsilon^{-\frac{p}{p-1}}\hat{u}_\varepsilon$, as a test function in (\ref{formvarcv1-0}). Using monotonicity, we have
\begin{equation*}\begin{array}{l}\displaystyle
\mu \int_{\omega\times Y}S\left({\eta_\varepsilon \over \varepsilon}\mathbb{D}_{y^\prime} \left[\varphi \right] +\partial_{y_3}\left[\varphi \right]\right):\left( {\eta_\varepsilon \over \varepsilon}\mathbb{D}_{y^\prime}\left[\phi_\varepsilon \right]+\partial_{y_3}\left[\phi_\varepsilon \right]\right)dx^{\prime}dy \\
\displaystyle
-\int_{\omega\times \Pi} \hat{P}_\varepsilon\, {\rm div}_{x^\prime} \phi^\prime_{\varepsilon}\,dx^{\prime}dy
\ge\int_{\omega\times Y}f'\cdot \phi^\prime_{\varepsilon}\,dx^\prime dy+O_\varepsilon\,.
\end{array}
\end{equation*}
Thus, we can use  the convergences (\ref{convPgorro1}) and (\ref{converneUgorroUtilde}). If we argue similarly as the step 1, we have that the convergence of the pressure $\hat{P}_\varepsilon$ is strong. Then, when passing to the limit, the second term contributes nothing because the limit of $\hat{P}_\varepsilon$ does not depend on $y$ and $\hat{u}'$ satisfies (\ref{divmacro1}). We obtain
\begin{equation*}\begin{array}{l}\displaystyle
\mu \int_{\omega\times Y}S\left(\partial_{y_3}\left[\varphi^\prime \right]\right):\partial_{y_3}\left[\varphi^\prime-\hat{u}^\prime \right]dx^{\prime}dy \\
\displaystyle
\ge\int_{\omega\times Y}f'\cdot (\varphi^\prime-\hat{u}^\prime)\,dx^\prime dy+O_\varepsilon\,,
\end{array}
\end{equation*}
which, due to Minty Lemma \cite{Lions2}, is equivalent to
\begin{equation*}
-\mu\,\partial_{y_3}\left( S\left(\partial_{y_3}\left[\hat u^\prime \right] \right) \right)=f^\prime \text{\ \ in \ \ }\omega \times Y.
\end{equation*}
By density, and reasoning as in Step 1, this problem is equivalent to the effective system (\ref{effective2}). Observe that the condition (\ref{div0-2}) implies that $\hat{q}$ does not depend on $y_3$. Finally, from Theorem 8 in \cite{BourMik} we have that system  (\ref{effective2}) has a unique solution and  moreover $\tilde P\in W^{1,p'}(\omega)$.\\

{\it Step 3.} Case $\eta_\varepsilon \gg \varepsilon$.  From Lemma \ref{LemmaConvergenceUtilde} and Lemma \ref{lemma_gorro}, we take into account that we are going to obtain an effective problem for the pressure in $\Omega^-$ without involving the microstructure of the domain $\widetilde \Omega_\varepsilon$. Thus, we choose in (\ref{formulaciondebiltilde}) the following test function  
$\varphi_{\varepsilon}(x^\prime,y_3)=(\varphi^\prime(x^\prime,y_3),\eta_\varepsilon \varphi_3(x^\prime,y_3))\in\mathcal{D}(\Omega^-)^3$ satisfying 
$${\rm div}_{x^\prime} \varphi^\prime+\partial_{y_3}\varphi_3=0 \quad \text{in \ }  \Omega^-,\qquad{\rm div}_{x^\prime}\!\left(\int_0^{h_{\rm min}} \varphi^\prime(x^\prime,y_3)  dy_3\right)=0\quad \text{in \ }  \omega.$$
Integrating by parts, we obtain  
\begin{equation*}\label{form_123}
\begin{array}{l}\displaystyle
\mu \int_{\Omega^-}S\left(\mathbb{D}_{\eta_\varepsilon}\left[\tilde{u}'_\varepsilon \right] \right):\mathbb{D}_{\eta_\varepsilon}\left[\varphi^\prime \right]\,dx^\prime dy_3=\int_{\Omega^-}f^\prime\cdot \varphi^\prime\,dx^\prime dy_3 +O_\varepsilon.
\end{array}
\end{equation*}
The procedure to obtain the effective problem is standard and is given in Proposition 3.2 in Mikeli\'c and Tapiero \cite{MT}, so we omit it. Then, we obtain the effective system (\ref{Homogenized_3}). Finally, from Proposition 3.3 in \cite{MT} we have that $\tilde P\in W^{1,p'}(\omega)$.
\end{proof}

In the final step, we will eliminate the microscopic variable $y$ in the effective problem. This is the focus of the Theorem \ref{MainTheorem}. 
\begin{proof}[Proof of Theorem \ref{MainTheorem}]
In the case $\eta_{\varepsilon}\approx \varepsilon$, with $\eta_\varepsilon/\varepsilon\to \lambda$, $0<\lambda<+\infty$ the derivation of (\ref{Main1}) from the effective problem (\ref{effective1}) is  straightforward by using the local problem (\ref{local_3D}) and definition (\ref{def_A_lambda}).

In the case $\eta_{\varepsilon}\ll \varepsilon$, we proceed as the previous case. We deduce that
\begin{equation}
\left\{
\begin{array}
[c]{r@{\;}c@{\;}ll}%
\displaystyle \tilde{V}^\prime(x^{\prime}) &
= &
-\displaystyle {1\over \mu}A^{0}\left(f^\prime(x^{\prime})-\nabla_{x^{\prime}} \tilde{P}(x^{\prime}) \right) \text{\ in \ }\omega,\\
\displaystyle {\rm div}_{x^{\prime}}\, \tilde{V}^\prime(x^{\prime}) & = & 0 \text{\ in \ }\omega,\\
\displaystyle \tilde{V}^\prime(x^{\prime}) \cdot n & = & 0 \text{\ in \ }\partial\omega,
\end{array}
\right. \label{demo}
\end{equation}
where $\tilde V(x^\prime)=\int_0^{h_{\rm max}}\tilde{v}(x^\prime,y_3)\,dy_3$ and $A^{0}:\mathbb{R}^2\to \mathbb{R}^2$ is monotone, coercive and defined by  
\begin{equation}\label{def_A_0_demo}
A^{0}(\xi^\prime)=\int_{Y}w^{\xi^\prime}(y)\,dy,\quad \forall\,\xi^\prime\in\mathbb{R}^2,
\end{equation}
where,  $w^{\xi^\prime}(y^\prime)$, for every $\xi^\prime\in\mathbb{R}^2$, denotes the unique solution in $W^{1,p}_\sharp(Y^\prime)^2$ of the local Stokes problem in 2D
\begin{equation}\label{local_2D_infty_demo}
\left\{\begin{array}{rcl}\displaystyle
-\partial_{y_3}S\left(\partial_{y_3}[w^{\xi^\prime}]\right)+\nabla_{y^\prime}\pi^{\xi^\prime}&=&-\xi^\prime\quad\text{ in \ } Y,\\
\displaystyle
{\rm div}_{y^\prime} \left( \int_0^{h(y')}w^{\xi^\prime}dy_3\right)&=&0\quad\text{ in \ } Y',\\
w^{\xi^\prime}&=&0\quad\text{ on \ } y_3=0,h(y')\\
w^{\xi^\prime}(x',y), \pi^{\xi^\prime}(x',y')\ Y^\prime-\text{periodic}.
\end{array}\right. 
\end{equation}
We observe that (\ref{local_2D_infty_demo}) can be solved, and we can give a Reynolds type equation.

Take into account that
$$\left\vert \partial_{y_3}\left[w^{\xi^\prime} \right]\right\vert^{p-2}=\left\vert Tr\left(\partial_{y_3}\left[w^{\xi^\prime} \right],\partial_{y_3}^t\left[w^{\xi^\prime} \right] \right)\right\vert^{\frac{p}{2}-1},$$
implies
$$S(\partial_{y_3}[w^{\xi^\prime}])=2^{-{p\over 2}}S(\partial_{y_3}w^{\xi^\prime}),$$
from Proposition 3.4 in \cite{MT}, we deduce that
\begin{equation*}
w^{\xi^\prime}(y)=-{2^{p'\over 2}\over p'}\left({h(y')^{p'}\over 2^{p'}}-\left\vert {h(y')\over 2}-y_3\right\vert^{p'} \right)\left\vert \xi^\prime+\nabla_{y'}\pi^{\xi^\prime}\right\vert^{p'-2}\left(\xi^\prime+\nabla_{y'}\pi^{\xi^\prime} \right).
\end{equation*}
From the expression of the Darcy velocity (1.14) in \cite{MT}, we have
\begin{equation*}
\int_0^{h(y')} w^{\xi^\prime}(y)\,dy_3=-{h(y')^{p'+1} \over 2^{p'\over 2}(p'+1)}\left\vert \xi^\prime+\nabla_{y'}\pi^{\xi^\prime}\right\vert^{p'-2}\left(\xi^\prime+\nabla_{y'}\pi^{\xi^\prime} \right).
\end{equation*}
Then, from (\ref{demo})-(\ref{def_A_0_demo}) we have (\ref{Main2}) and (\ref{def_A_0}), and from the second equation in (\ref{local_2D_infty_demo}) we have (\ref{local_2D_infty}).

In the case $\eta_\varepsilon \gg \varepsilon$,  in order to obtain (\ref{thm_Homogenized_3}), we only need to obtain an expression for the velocity $\tilde v'$ in terms of the pressure $\tilde P$ from the first equation in (\ref{Homogenized_3}). This is given in Proposition 3.4 in \cite{MT}, and we have
{\small $$
\tilde v'(x',y_3)={2^{p'\over 2}\over p'\mu^{p'-1}}\!\!\left({h_{\rm min}^{p'}\over 2^{p'}}-\left\vert {h_{\rm min}\over 2}-y_3\right\vert^{p'} \right)\!\!
\left\vert \tilde f'(x')-\nabla_{x'}\tilde P(x')\right\vert^{p'-2}\!\!\left( \tilde f'(x')-\nabla_{x'}\tilde P(x')\right).
$$}
From the expression of the Darcy velocity (1.14) in \cite{MT}, we have (\ref{thm_Homogenized_3}). Finally, from Propositions 3.5  in \cite{MT}, we have that the problem (\ref{thm_Homogenized_3}) has a unique solution.
\end{proof}

\subsection*{Acknowledgments}
Mar\'ia Anguiano has been supported by Junta de Andaluc\'ia (Spain), Proyecto de Excelencia P12-FQM-2466. Francisco J. Su\'arez-Grau has been supported by Ministerio de Econom\'ia y Competitividad (Spain), Proyecto Excelencia MTM2014-53309-P.


\begin{thebibliography}{10}

\bibitem{Allaire0} G. Allaire, Homogenization of the Stokes flow in a connected porous medium, Asymptotic Analysis 2 (1989) 203-222.

\bibitem{Allaire1} G. Allaire, Homogenization and two-scale convergence, SIAM J. Math. Anal., 23 (1992) 1482-1518.

\bibitem{arbogast} T. Arbogast, J. Douglas J.R., and U. Hornung, Derivation of the double porosity model of single phase flow via homogenization theory, SIAM J. Math. Anal., 21 (1990), 823-836.

\bibitem{Arrieta} J.M. Arrieta and M. Villanueva-Pesqueira, Unfolding operator method for thin domains with a locally periodic highly oscillatory boundary, SIAM J. Math. Anal. 48 (2016) 1634-1671.

\bibitem{Bayada1} G. Bayada and M. Chambat, The transition between the Stokes equations and the Reynolds equation: a mathematical proof. {\it Appl.~Math.~Opt.} 14 (1986), 73--93.


\bibitem{Bayada_Chambat_2} G. Bayada and  M. Chambat, New models in the theory of the hydrodynamic lubrication of rough surfaces. J. Tribol. 110 (1988) 402-407.

\bibitem{Bayada_Chambat} G. Bayada and  M. Chambat, Homogenization of the Stokes system in a thin film flow with rapidly varying thickness, RAIRO Mod\'el. Math. Anal. Num\'er., 23 (1989), 205-234. 


\bibitem{CBF} M. Chambat, G. Bayada and J.B. Faure, Some effects of the boundary roughness in a thin film flow, in Boundary variations and boundary control, Proc. IFIP Conf. Nice 1986, Lect. Notes Control Inf. Sciences, 100, Springer Verlag, Berlin (1988).


\bibitem{Bouk1} F. Boughanim, M. Boukrouche and H. Smaoui, Asymptotic behavior of a non-newtonian flow with stick-slip condition, in: Proceedings of the 2004-Fez Conference of Differential Equations and Mechanics, Electron. J. Diff. Equ. Conf. 11 (2004) 7180 (electronic)

\bibitem{Bouk2} M. Boukrouche and R. El Mir, Asymptotic analysis of a non-Newtonian fluid in a thin domain with Tresca law, Nonlinear Anal. 59 (2004) 85-105.

\bibitem{bourgeat_fissure} A. Bourgeat, E. Maru${\rm \check{s}}$i\'c-Paloka and A. Mikelic, Effective fluid flow in a porous medium containing a thin fissure, Asymptotic Analysis {\bf 10}, 1-22 (1994).


\bibitem{Bourgeat} A. Bourgeat, A. Mikeli\'c and R. Tapiero, D\'erivation des \'equations moyennes \'ecrivant un \'ecoulement non Newtonien dans un domaine de faible \'epaisseur, C.R. Acad. Sci. Paris, S\'er. I, 316 (1993) 965-970.

\bibitem{BourMik}  A. Bourgeat and A. Mikeli\'c, Homogenization of a polymer flow through a porous medium, Nonlinear Analysis,  (7) 26: 1221-1253 (1996).


\bibitem{Marusic1} A. Bourgeat, O. Gipouloux, E. Maru${\rm \check{s}}$i\'c-Paloka, Filtration law for polymer flow through porous media, Multiscale Model. Simul., 1 (2003), no. 3, 432-457.







\bibitem{Ciora} D. Cioranescu, A. Damlamian and G. Griso, Periodic unfolding and homogenization, C.R. Acad. Sci. Paris Ser. I, 335 (2002) 99-104.


\bibitem{Ciora2} D. Cioranescu, A. Damlamian and G. Griso, The periodic unfolding method in homogenization, SIAM J. Math. Anal. (4) {\bf 40} (2008) 1585-1620.


\bibitem{Duv} A. Duvnjak, Derivation of Non-linear Reynolds-type Problem for Lubrication of a Rotating Shaft,  Z. angew. Math. Mech.  82 (2002)  317-333.



\bibitem{Fab1} J. Fabricius, Y.O. Koroleva, A. Tsandzana and P. Wall, Asymptotic behaviour of Stokes flow in a thin domain with a moving rough boundary, Proc. R. Soc. A. 470: 20130735


\bibitem{Fab2} J. Fabricius,  A. Tsandzana, F. P\'erez-Rafolds and P. Wall, A comparison of the roughness regimes in hydrodynamic lubrication, J. Tribol.  (2017), doi:10.1115/1.4035868






\bibitem{Lions2} J.L. Lions, Quelques m\'ethodes de r\'esolution des probl\` emes aux limites non lin\'eaires, Dunod, Gauthier-Villars, Paris, 1969.




\bibitem{Nec} J. M\'alek, J. Ne$\check{\hbox{c}}$as, M. Rokyta, and M. R$\mathring{{\rm u}}$$\check{{\rm z}}$i$\check{{\rm c}}$ka, Weak and measured-valued solutions to evolutionary PDE's. Chapman and Hall, London,  1996.

\bibitem{Mikelic2} A. Mikeli\'c, Remark on the result on homogenization in hydrodynamical lubrication by G. Bayada and M. Chambat,  RAIRO Mod\'el. Math. Anal. Num\'er.,  25 (1991)  363-370. 

\bibitem{MT} A. Mikeli\'c and R. Tapiero, Mathematical derivation of the power law describing polymer flow through a thin slab, Mod\'elisation mathematique et analyse num\'erique, 29 (1995) 3-21.




\bibitem{Nghe} G. Nguetseng, A general convergence result for a functional related to the theory of homogenization, SIAM J. Math. Anal., 20 (1989) 608-623.

\bibitem{Patir} N. Patir Thien and H.S. Cheng, An average flow model for deterministic effects of three dimensional roughness on partial hydrodynamic lubrication, J. Tribol. 100 (1978) 12-17.

\bibitem{Phan} N. Phan-Thien, On the effects of the Reynolds and Stokes surface roughness in a two-dimensional slider bearing, Proc. R. Soc. Lond. A, 377 (1981) 349-362.


\bibitem{Reynolds} O. Reynolds, On the theory of lubrication and its applications to Mr. Beauchamp Tower's
experiments, including an experimental determination of the viscosity of olive oil, {Philos.~Trans.~Roy.~Soc.~London} 177 (1886)  157--234.



\bibitem{grau1} F.J. Su\'arez-Grau, Asymptotic behavior of a non-Newtonian flow in a thin domain with Navier law on a rough boundary, Nonlinear Analysis, 117 (2015) 99-123.

\bibitem{Tartar} L. Tartar, Incompressible fluid flow in a porous medium convergence of the homogenization process. In: Appendix to Lecture Notes in Physics, 127. Berlin: Springer-Velag, 1980. 


\bibitem{Wall} P. Wall, Homogenization of Reynolds equation by two-scale convergence, Chin. Ann. Math. Ser. B, 28 (2007) 363-374.

\end{thebibliography}
\end{document}